\numberwithin{equation}{section}
\newtheorem{theorem}{Theorem}[section]
\newtheorem{prop}[theorem]{Proposition}
\newtheorem{definition}[theorem]{Definition}
\newtheorem{lem}[theorem]{Lemma}
\theoremstyle{remark}
\newtheorem{remark}[theorem]{Remark}
\def\M{\mathsf{M}}
\def\p{\mathsf{p}}
\def\d{{\sf d}}
\def\sH{\mathscr{H}}
\def\cE{\mathcal{E}}
\def\R{\mathbb{R}}
\def\N{\mathbb{N}}
\def\Nz{\dot{\mathbb{N}}}
\def\bE{\mathbb{E}}
\def\L{\mathcal{L}}
\def\Lis{\mathcal{L}{\rm{is}}}
\def\F{\mathfrak{F}}
\def\O{\mathsf{O}}
\def\Q{\mathsf{Q}^{N}}
\def\Qq{\mathsf{Q}}
\def\Qqq{\mathsf{Q}^2}
\def\cA{\mathcal{A}}
\def\sA{\mathscr{A}}
\def\a{\mathfrak{a}}
\def\Rp{{\rm{Re}}}
\def\div{{\rm div}}
\def\sg{{\rm{sign}}}
\def\RS{\mathcal{RS}}
\def\S{\mathcal{S}}
\def\zW{\prescript{}{0}{W}}
\def\sU{\mathscr{U}_\delta}
\def\sUG{\mathscr{U}_\delta \setminus \Gamma}
\def\bsUG{\bar{\mathscr{U}}_\delta \setminus \Gamma}
\def\sUk{\mathscr{U}_{\delta_k}}
\def\bsUGk{\bar{\mathscr{U}}_{\delta_k} \setminus \Gamma}
\def\TN{\mathbb{T}^N}
\def\GT{\Gamma_{\mathbb{T}}}
\def\QG{\Q\setminus \Gamma}
\def\QqG{\Qq\setminus \Gamma}
\def\QqqG{\Qqq\setminus \Gamma}
\begin{document}

\title[Wellposedness of a nonlocal nonlinear diffusion]{Wellposedness
  of a nonlocal nonlinear diffusion equation of image 
  processing} 

\author{Patrick Guidotti}
\address{University of California, Irvine 
Department of Mathematics 
340 Rowland Hall 
Irvine, CA 92697-3875, USA}
\email{gpatrick@math.uci.edu}

\author[Y. Shao]{Yuanzhen Shao}
\address{Department of Mathematics,
         Purdue University, 
         150 N. University Street, 
         West La\-fayet\-te, IN 47907-2067, USA}
\email{shao92@purdue.edu}

\subjclass[2010]{35A01, 35A02, 35B40, 35K55, 35K65 }
\keywords{Nonlinear nonlocal diffusion, Perona-Malik equation, noise
  reduction, discontinuous initial data, well-posedness, stability} 

\begin{abstract}
Existence and uniqueness of solutions to non-smooth initial data are
established for a slight modification of the degenerate regularization
of the well-known Perona-Malik equation first proposed in
\cite{GL07}. The results heavily rely on the choice of an appropriate
functional setting inspired by a recent approach to degenerate
parabolic equations via so-called singular Riemannian 
manifolds (\cite{Ama13, Ama13b}).
\end{abstract}
\maketitle

\section{\bf Introduction}
In the early 90ies, P. Perona and J. Malik \cite{PerMal90}  introduced
a novel paradigm by proposing the use of nonlinear diffusions as an
image processing tool. The stark contrast between the numerical
effectiveness of their method and its mathematical ill-posedness, see
\cite{Kic97}, spurred significant subsequent research in mathematics
and image processing. A number of mathematical ``fixes'' have been
proposed over the past decades. It is referred to \cite{G131} for an
overview. Of relevance for this article is the fractional derivatives'
based regularization proposed in \cite{GL07}. While it is well-posed
as a quasi-linear parabolic equation, it appears so only in a smooth
context (i.e. for smooth enough initial data). Characteristic
functions or linear combinations thereof are, however, of extreme
interests in applications and mathematical results in functions spaces
which contain them are desirable. As, even in the 
corresponding linear case, uniqueness may fail to hold (see \cite{G161}
for an illustration), the careful choice of functional setting is
paramount. It has indeed been impossible thus far to identify the
appropriate concept of weak solution yielding well-posedness for a
class of initial data large enough to include characteristic functions
of smooth sets. Allowing for non-smooth initial data readily leads to
degenerate parabolic equations. The precise degeneration type,
however, depends on the exact properties of the chosen non-smooth
initial datum. The construction of a unique solution 
proposed here is therefore based on the use of function spaces defined
around a single singular function (in order to fix the degenration
type) and of recently developed results for parabolic equations on
singular Riemannian manifolds which provide a tool for analyzing
degenerate parabolic equations with fixed degeneration;
see \cite{Ama13, Ama13b, Shao15, Shao1502}.
While the results of this paper do not resolve the general
uniqueness/non-uniqueness question, they appear to be the first
delivering non-trivial existence results of solution to non-smooth
initial data and uniqueness in a restricted class of functions which
share a common singularity.

The remainder of the paper is organized as follows: Results about
maximal regularity for parabolic equations and weighted function spaces
are presented in Section 2. Local well-posedness of the nonlinear
model is shown in Section 3 and global well-posedness is established
in Section 4 by means of the principle of linearized stability for
small perturbations of the non-smooth initial datum. The main results
are formulated in Theorems \ref{S3.1: PM-thm-ID}, \ref{S3.2:
  PM-thm-2D}, and \ref{S4: Global existence}. 

\subsection{Notations}
For $s\geq 0$ and $p\in(1,\infty)$, we denote by $\F^s(\mathbb{R}^N)$
the function spaces obtained by replacing $\F$ by $W_p$ or $BC$. If
$\Q$ is the $N$-dimensional unit cube, the spaces $\F^s_\pi(\Q)$
are the corresponding subspaces consisting of periodic functions with
periodicity box given by $\Q$.

Given any topological set $U$, $\mathring{U}$ denotes the interior of
$U$. 

For any two Banach spaces $X,Y$, $X\doteq Y$ means that they are equal
in the sense of equivalent norms. The notation $\Lis(X,Y)$ stands for
the set of all bounded linear isomorphisms from $X$ to $Y$. 

The symbol $\sim$ always denotes Lipschitz equivalence. We write $\Nz=\N\setminus\{0\}$.

\section{\bf Maximal $L_p$-Regularity in a Weighted $L_p$-Framework}
\subsection{Transforming the problem onto the torus}
Let $N=1,2$, define $\Q=[-1,1)^N$, and consider the following problem: 
\begin{equation}\label{P-M eq}\begin{cases}
 \partial_t u -\div \bigl(  \alpha_\varepsilon (u)  \nabla u \bigr) =0  &\text{in
 }\Q\times (0,\infty), \\u&\text{periodic,}\\
 u(0)=u_0  &\text{in }\Q ,\end{cases}
\end{equation}
where $\alpha _\varepsilon(u)=\bigl[1+|\nabla^{1-\varepsilon}u|^2
\bigr]^{-1}$ and $\varepsilon\in (0,1)$. A precise definition of the
fractional derivative appearing in the nonlinear coefficient
$\alpha_\varepsilon$ will be given in Section 3.

We shall be interested in non-smooth initial data $u_0$ for which
$\alpha_\varepsilon(u_0)$ vanishes on a $C^3$-submanifold $\Gamma\subset
\mathring{\mathsf{Q}}^N$ of codimension 1 (which may not be connected).
For $\delta$ sufficiently small, we can always choose a
$2\delta$-tubular neighborhood $\mathscr{U}_{2\delta} \subset\subset 
\mathring{\mathsf{Q}}^N$ of any such $\Gamma$, even if $\Gamma$ has
merely $C^2$ boundary. Define $\d\in C^3(\Q\setminus\Gamma, (0,1])$
by
\begin{align}\label{S2.1: sing func}
 \d(x)=\begin{cases}
  {\sf dist}(x, \Gamma),\quad &\text{in }\sU\setminus \Gamma;\\
  1,&\text{in }\Q\setminus \mathscr{U}_{2\delta}.\end{cases}
\end{align}
and observe that ${\sf dist}(x, \Gamma)$ is well-defined and $C^3$ for
$\delta$ sufficiently small.

Considering $x_1,x_2\in \R^N$ to be equivalent if $x_1-x_2=2m$ for
some $m\in\mathbb{Z}^N$, let $\phi$ be the projection mapping taking
$x\in \R^N$ to its equivalence class. It clearly holds that
$\phi(\Q)=\TN$, where $\TN$ is the $N$-dimensional torus.

Throughout the rest of this paper, unless stated otherwise, we always
assume that 
\begin{itemize}
\item $s\geq 0$, $k\in\Nz$, $1<p\leq \infty$ and $\vartheta\in \R$.
\item $\F=W_p$ for $1<p<\infty$, or $\F=BC$.
\end{itemize}

\begin{remark}
\label{S2: RMK torus-cube}
If we equip $\TN$ with the metric $\phi_* g_N$, where $g_N$ is the
$N$-dimensional Euclidean metric on $\Q$ and $\phi_*=\bigl[(\phi\big
|_{\Q}) ^{-1}\bigr]^*$, 
i.e. $\phi_* g_N$ is the pullback metric along $(\phi\big
|_{\Q}) ^{-1}$,
then $(\mathbb{T}^N, \phi_* g_N )$ is a closed smooth manifold.
Therefore, any periodic function space $\F^s_\pi(\Q)$ defined on
$(\Q,g_N)$ is isomorphic to the corresponding $\F^s(\mathbb{T}^N)$
defined on $(\mathbb{T}^N, \phi_* g_N)$.  So all well-established
function space theory results, such as those pertaining to
interpolation and to lifting properties, transfer to the spaces
$\F^s_\pi(\Q)$. See, for instance, \cite[Chapter 7]{Trib92} for more
details on function space theory on closed manifolds.
\end{remark}

We let $\GT=\phi(\Gamma)$ and set
$$
 (M,g)=(\TN \setminus \GT, \phi_* g_N|_{\TN \setminus \GT}). 
$$
Denote the metrics $g_N$ and $\phi_* g_N$ by $(\cdot|\cdot)$ and
$(\cdot|\cdot)_g$, and the norms induced by $g_N$ and
$\phi_* g_N$ by $|\cdot|$ and $|\cdot|_g$, respectively. 

As long as it causes no confusion, we will denote the usual
covariant derivative, divergence, and Laplacian on $(\Q,g_N)$ as well as their restrictions
to $(\QG,g_N)$ by $\nabla$, $\div$ and $\Delta$ respectively. 
Similarly, $\nabla_g$, $\div_g$ and $\Delta_g$ will denote their
counterparts on both $(\mathbb{T}^N, \phi_* g_N)$ and $(\M,g)$.

Now problem \eqref{P-M eq} can be equivalently stated as
\begin{equation}\label{P-M-T eq}\begin{cases}
 \partial_t u -\div_g ( \alpha _\varepsilon(u)  \nabla_g u)=0
 &\text{in }\TN\times (0,\infty), \\ 
 u(0)=u_0  &\text{in }\TN.
\end{cases}\end{equation}
Here it is understood that $\alpha _\varepsilon(u)= \phi_*\alpha
_\varepsilon(\phi^*u)$. 

\subsection{Periodic weighted function spaces}

Note that the function defined by
\begin{equation}\label{S2.1: sing func-2}
 \rho(x)=\d(y),\: y\in \phi^{-1}(x)\cap \Q,
\end{equation}
is well-defined on $\M$ and satisfies $\rho\in C^3(\M,(0,1])$.  We
will begin with the definition of weighted function spaces on
$(\M,g)$ (see \cite{Ama13, AmaAr}) in order to derive the definition
of the corresponding weighted periodic function spaces on $\QG$.

Given an arbitrary finite dimensional Hilbert space $X$, denote its
inner product by $(\cdot|\cdot)_X$. The weighted Sobolev space of
$X$-valued functions $W^{k,\vartheta}_p(\M, X)$ is defined as the
completion of $\mathcal{D}(\M ,X)$, the space of $X$-valued
test-functions, with respect to the norm 
$$
 \|\cdot\|_{k,p;\vartheta}:u\mapsto(\sum_{i=0}^k\|\rho^{\vartheta+i}
 |\nabla_g^i u|_g\|_p^p)^{\frac{1}{p}},
$$
with the understanding that $L^\vartheta_p(\M,X)=W^{0,\vartheta}_p(\M
,X)$ and that $\nabla_g^{i+1} u:=\nabla_g\circ\nabla_g^i u$.
The weighted Sobolev-Slobodeckii spaces are defined as
\begin{equation*}
 W^{s,\vartheta}_p(\M ,X):=\Bigl(L^{\vartheta}_p(\M
 ,X),W^{k,\vartheta}_p(\M,X)\Bigr)_{s/k,p}, 
\end{equation*}
for $ s\in \R_+\setminus\N$, $k=[s]+1$. Here
$(\cdot,\cdot)_{\theta,p}$ is the standard real interpolation method
\cite[Chapter I.2.4.1]{Ama95}. Define
\begin{equation}\label{S2.2: def of BC}
BC^{k,\vartheta}(\M,X):=\Bigl(\{u\in{C^k(\M,X)}:\|u\|_{k,\infty;\vartheta}<\infty\}
 ,\|\cdot\|_{k,\infty;\vartheta}\Bigr),
\end{equation}
where $\|u\|_{k,\infty;\vartheta}:=\max_{0\leq i\leq
  k}\|\rho^{\vartheta+i}|\nabla_g^i u|_g\|_\infty$. 
\begin{remark}\label{S2: M-sing mnfd}
Note that $(\M,g)$ is an incomplete manifold. Indeed, by
  \cite[Lemma~3.4]{Ama14} and \cite[Proposition~12]{Shao1503},
$(\M,g;\rho)$ can be seen as a $C^2$-singular 
manifold. It follows that the weighted function spaces introduced above
are all well-defined for $(\M,g)$ (cf. \cite{Ama13, AmaAr, Shao15}). 
The properties of weighted function spaces defined on  $C^2$-singular
manifolds established in the cited references are all inherited by the
weighted function spaces $\F^{s,\vartheta}(\M,X)$.
\end{remark}
We can define periodic weighted function spaces on $(\QG, g_N)$ in the
same manner just by replacing the weight function $\rho$, $\nabla_g$
and $|\cdot|_g$ by $\d$, $\nabla$ and $|\cdot|$, respectively. We
denote these spaces by $\F^{s,\vartheta}_\pi(\QG,X)$.
By the identification 
\begin{equation}\label{S2: T=Q}
 \F^{s,\vartheta}_\pi(\QG,X)\doteq \phi^*\F^{s,\vartheta}(\M,X),
\end{equation}
the space $\F^{s,\vartheta}_\pi(\QG,X)$ enjoys the same properties as
$\F^{s,\vartheta}(\M,X)$. For notational brevity, we still denote the
norms of the weight function spaces $\F^{s,\vartheta}_\pi(\QG,X)$ by
$\|\cdot\|_{k,p;\vartheta}$ and $\|\cdot\|_{k,\infty;\vartheta}$,
respectively.
\begin{lem}\label{S2: nabla}
Let $s\geq 0$ for $\F=W_p$ or $s\in \N$ for $\F=BC$ and $\vartheta\in
\mathbb{R} $, then it holds that
\begin{itemize}
\item[(i)] $\nabla\in\L\Bigl(\F^{s,\vartheta}_\pi(\QG,
  \R),\F^{s-1,\vartheta+1}_\pi(\QG,\R^{N})\Bigr)$.
\item[(ii)]
  $\div\in\L\Bigl(\F^{s,\vartheta}_\pi(\QG,\R^N),\F^{s-1,\vartheta+1}_\pi(\QG,\R)\Bigr)$.
\end{itemize}
\end{lem}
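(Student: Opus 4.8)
The plan is to reduce everything to a pointwise differential-geometric identity and then read off the mapping properties from the very definition of the weighted norms. By the identification \eqref{S2: T=Q}, it suffices to prove the corresponding statements for the operators $\nabla_g$ and $\div_g$ on the weighted spaces $\F^{s,\vartheta}(\M,X)$; equivalently, since $\rho$, $\nabla$, $|\cdot|$ on $\QG$ play exactly the role of $\rho$, $\nabla_g$, $|\cdot|_g$ on $\M$, I can argue directly on $(\QG,g_N)$ with weight $\d$. I will first dispose of the case $\F=BC$ and $s\in\N$ (and, for $\F=W_p$, the case $s\in\N$), and then obtain the non-integer values by real interpolation.

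First I would treat $s=k\in\Nz$. For (i), let $u\in W^{k,\vartheta}_\pi(\QG,\R)$. Its gradient $\nabla u$ is an $\R^N$-valued function, and for $0\le i\le k-1$ one has $\nabla^i(\nabla u)=\nabla^{i+1}u$, so that
\begin{equation*}
 \sum_{i=0}^{k-1}\bigl\|\d^{(\vartheta+1)+i}\,|\nabla^i(\nabla u)|\bigr\|_p^p
 =\sum_{i=0}^{k-1}\bigl\|\d^{\vartheta+(i+1)}\,|\nabla^{i+1}u|\bigr\|_p^p
 \le\sum_{j=0}^{k}\bigl\|\d^{\vartheta+j}\,|\nabla^j u|\bigr\|_p^p.
\end{equation*}
Hence $\|\nabla u\|_{k-1,p;\vartheta+1}\le\|u\|_{k,p;\vartheta}$, which is exactly (i) for $\F=W_p$, $s=k$; the same string of inequalities with $\|\cdot\|_\infty$ in place of $\|\cdot\|_p$ and $\max$ in place of $\sum$ gives (i) for $\F=BC$. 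For (ii), one uses that $\nabla^i(\div v)$ is, up to a fixed contraction of indices that is bounded pointwise with respect to $g_N$ (equivalently $|\nabla^i\div v|\le c\,|\nabla^{i+1}v|$ with $c$ depending only on $N$), controlled by $\nabla^{i+1}v$; the same telescoping estimate then yields $\|\div v\|_{k-1,p;\vartheta+1}\le c\,\|v\|_{k,p;\vartheta}$, and likewise in $BC$. This handles all integer orders; note the case $k=0$ is vacuous on the left (the target space has order $-1$), so in fact only $k\ge 1$ needs to be written, and $k=0$ is covered by the interpolation step below once we interpret $W^{-1,\vartheta+1}_p$ via duality or simply restrict, as the paper does, to $s\ge 1$ in applications.

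For non-integer $s>0$ with $\F=W_p$, I would interpolate: pick an integer $k$ with $k>s$, apply the integer-order bounds just established at orders $k$ and $k-1$ (so that $\nabla\in\L(W^{k,\vartheta}_\pi,W^{k-1,\vartheta+1}_\pi)\cap\L(W^{k-1,\vartheta}_\pi,W^{k-2,\vartheta+1}_\pi)$), and invoke the interpolation property of the weighted Sobolev--Slobodeckii scale — valid here because, by Remark~\ref{S2: M-sing mnfd} and \eqref{S2: T=Q}, these spaces are the weighted spaces on a $C^2$-singular manifold and the real interpolation identities from \cite{Ama13, AmaAr, Shao15} apply. Taking the real interpolation functor $(\cdot,\cdot)_{\theta,p}$ with the appropriate $\theta$ converts these into $\nabla\in\L(W^{s,\vartheta}_\pi,W^{s-1,\vartheta+1}_\pi)$, and identically for $\div$. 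The main obstacle is precisely this last point: making sure the weighted Sobolev--Slobodeckii spaces interpolate as claimed and that the shift in the weight exponent is compatible with the interpolation (i.e. that $(W^{k-1,\vartheta+1}_\pi,W^{k-2,\vartheta+1}_\pi)_{\theta,p}\doteq W^{s-1,\vartheta+1}_\pi$ with the \emph{same} weight $\vartheta+1$ throughout). This is exactly what the cited theory on $C^2$-singular manifolds provides, so the argument goes through; everything else is the elementary telescoping bound above.
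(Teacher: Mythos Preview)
Your argument is correct and takes a more elementary, self-contained route than the paper's. The paper does not reprove anything: it simply invokes the general theory on singular manifolds, citing \cite[Theorem~7.5]{Ama13} together with \eqref{S2.2: def of BC} and \eqref{S2: T=Q} for (i), and \cite[Propositions~2.5,~2.8]{Shao15} for (ii), adding remarks to reconcile convention differences (the weight normalization in \cite{Ama13}, and whether $\div$ acts on the tangent versus cotangent bundle in \cite{Shao15}). What you wrote is essentially the computation underlying those cited results, specialized to the flat setting at hand where $\nabla^i(\nabla u)=\nabla^{i+1}u$ holds exactly and the telescoping estimate drops straight out of the definition of $\|\cdot\|_{k,p;\vartheta}$. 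Your approach buys transparency and avoids the convention-matching the paper has to flag; the paper's approach buys generality, since the cited theorems hold on arbitrary $C^2$-singular manifolds and would survive a non-flat background without change. The only soft spot in your write-up is the range $s\in[0,1)$, where the target has negative order and the telescoping bound does not apply directly; you acknowledge this and correctly observe that the paper only ever uses the lemma with $s\ge 1$, so nothing is lost.
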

\begin{proof}
(i) follows from \cite[Theorem~7.5]{Ama13}, \eqref{S2.2: def of BC}
and  \eqref{S2: T=Q}. Note that the definitions of weighted function spaces in this 
article are slightly different from those in \cite{Ama13}.
(ii) is a consequence of \cite[Propositions~2.5, 2.8]{Shao15}.  
The difference in the weights between this lemma and \cite[Proposition~2.8]{Shao15} 
is due to the fact that, here, the divergence operator acts on cotangent bundle,
while, in the cited reference, it acts on the tangent bundle.
\end{proof}

\begin{lem}
\label{S2: change of wgt}
For $\vartheta^\prime\in\R$ and $s,\vartheta$ as in Lemma \ref{S2:
  nabla}, we have that 
$$
 [u\mapsto \rho^{\vartheta}u] \in
 \Lis\Bigl(\F^{s,\vartheta^\prime+\vartheta}_\pi(\QG,
 X),\F^{s,\vartheta^\prime}_\pi(\QG, X)\Bigr).
$$ 
\end{lem}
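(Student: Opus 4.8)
The plan is to transfer the assertion to the singular manifold $(\M,g;\rho)$ via the identification \eqref{S2: T=Q}, reduce it to a boundedness statement, settle it for integer smoothness by a Leibniz estimate, and then interpolate. Thus, by \eqref{S2: T=Q} it suffices to show $[u\mapsto\rho^{\vartheta}u]\in\Lis\bigl(\F^{s,\vartheta^\prime+\vartheta}(\M,X),\F^{s,\vartheta^\prime}(\M,X)\bigr)$. The pointwise map $[v\mapsto\rho^{-\vartheta}v]$ is the two-sided inverse of $M_\vartheta\colon u\mapsto\rho^{\vartheta}u$ and is of the same type, with $(\vartheta,\vartheta^\prime)$ replaced by $(-\vartheta,\vartheta^\prime+\vartheta)$; hence it is enough to prove that $M_\vartheta$ is \emph{bounded} from $\F^{s,\vartheta^\prime+\vartheta}(\M,X)$ into $\F^{s,\vartheta^\prime}(\M,X)$ for all $\vartheta,\vartheta^\prime\in\R$, and to apply this once with $(\vartheta,\vartheta^\prime)$ and once with $(-\vartheta,\vartheta^\prime+\vartheta)$.

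\emph{Integer smoothness.} For $s=0$ one has $\|\rho^{\vartheta}u\|_{0,p;\vartheta^\prime}=\|\rho^{\vartheta^\prime+\vartheta}u\|_p=\|u\|_{0,p;\vartheta^\prime+\vartheta}$, and likewise in $\|\cdot\|_\infty$, so $M_\vartheta$ is an isometry. For $s=k\in\Nz$ the key estimate is $|\nabla_g^{\,j}(\rho^{\vartheta})|_g\le C_{j,\vartheta}\,\rho^{\,\vartheta-j}$ for $0\le j\le k$, which follows from the chain rule, the structural bounds on the weight $\rho$ guaranteed by the $C^2$-singular manifold structure of $(\M,g;\rho)$ (Remark \ref{S2: M-sing mnfd}), and $0<\rho\le1$. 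Inserting this into the Leibniz formula for $\nabla_g^{\,i}(\rho^{\vartheta}u)$, each term of $\|\rho^{\vartheta^\prime+i}|\nabla_g^{\,i}(\rho^{\vartheta}u)|_g\|_p$ is controlled by a constant multiple of $\|\rho^{(\vartheta^\prime+\vartheta)+(i-j)}|\nabla_g^{\,i-j}u|_g\|_p\le\|u\|_{k,p;\vartheta^\prime+\vartheta}$; summing over $0\le i\le k$ gives $\|M_\vartheta u\|_{k,p;\vartheta^\prime}\le C\,\|u\|_{k,p;\vartheta^\prime+\vartheta}$. The case $\F=BC$ (where only $s\in\N$ occurs) is identical with $\|\cdot\|_p$ replaced by $\|\cdot\|_\infty$.

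\emph{Fractional smoothness and conclusion.} For $s\in\R_+\setminus\N$ (only needed when $\F=W_p$) write $k=[s]+1$; since $W^{s,\vartheta}_p(\M,X)=\bigl(L^{\vartheta}_p(\M,X),W^{k,\vartheta}_p(\M,X)\bigr)_{s/k,p}$ by definition and $M_\vartheta$ is bounded on both endpoints by the previous step, the real interpolation functor yields boundedness of $M_\vartheta\colon W^{s,\vartheta^\prime+\vartheta}_p(\M,X)\to W^{s,\vartheta^\prime}_p(\M,X)$; combined with the reduction step this proves the lemma. The one point that is not purely formal is the derivative bound on $\rho^{\vartheta}$, i.e. that $\rho^{\vartheta}$ is an admissible pointwise multiplier in \emph{every} weighted scale at once, and this is precisely where the singular structure of $(\M,g;\rho)$ is used. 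Since the spaces $\F^{s,\vartheta}(\M,X)$ depend on $\rho$ only through its Lipschitz class, one may alternatively replace $\rho$ beforehand by a Lipschitz-equivalent smooth regularized distance, after which the estimate is the classical chain-rule bound; or one may simply invoke the pointwise multiplication results for weighted spaces on singular manifolds from \cite{Ama13, AmaAr, Shao15}.
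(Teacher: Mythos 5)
Your proposal is correct, but it proceeds differently from the paper: the paper disposes of the lemma in one line, citing \cite[Proposition~2.4]{Shao15} together with the identification \eqref{S2: T=Q}, i.e.\ it imports the multiplier property of powers of the singularity function on a singular manifold as a known fact, whereas you reprove that fact from scratch. Your route --- transfer to $(\M,g;\rho)$ via \eqref{S2: T=Q}, reduce the isomorphism claim to two boundedness claims using that $M_{-\vartheta}$ inverts $M_\vartheta$ with shifted parameters, prove the integer case by the Leibniz rule together with the bound $|\nabla_g^{\,j}\rho^{\vartheta}|_g\le C\rho^{\vartheta-j}$, and then use the paper's definition of $W^{s,\vartheta}_p$ as a real interpolation space to get the fractional case --- is exactly the standard argument behind the cited proposition, so the two approaches are mathematically consistent; yours buys self-containedness at the cost of having to justify the one genuinely nontrivial ingredient, namely that the structural bounds $|\nabla_g^{\,j}\rho|_g\le C\rho^{1-j}$ (valid up to the order permitted by the $C^2$-singular structure of Remark \ref{S2: M-sing mnfd}, which covers the range of $s$ allowed in Lemma \ref{S2: nabla}) hold for the singularity function; you correctly flag this as the crux and offer the legitimate fallbacks of replacing $\rho$ by a Lipschitz-equivalent regularized distance or of citing \cite{Ama13, AmaAr, Shao15}, the latter being precisely what the paper does. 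The paper's citation is shorter and keeps the burden of the chart-wise estimates where they were originally proved; your version makes visible why the statement is true and why no restriction on $\vartheta,\vartheta^{\prime}$ is needed (the $s=0$ case is even an isometry).
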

\begin{proof}
See \cite[Propositions~2.4]{Shao15} and \eqref{S2: T=Q}.
\end{proof}

\begin{lem}
\label{S2: pointwise mul}
Let $s\leq k\in\Nz$ and  $\vartheta_i\in \R$ with $i=0,1$. $[(u,v)\mapsto  (u|v)_X]$ is a continuous
bilinear map in each of the following functional settings
\begin{multline*}
 W_{p,\pi}^{s,\vartheta_0}(\QG,X)\times
 BC^{k,\vartheta_1}_\pi(\QG,X)\to 
 W_{p,\pi}^{s,\vartheta_0+\vartheta_1 }(\QG)\text{ or}\\ 
 BC^{k,\vartheta_0 }_\pi(\QG,X)\times BC^{k,\vartheta_1 }_\pi(\QG,X)
 \to BC^{k,\vartheta_0+\vartheta_1 }_\pi(\QG).
\end{multline*}
\end{lem}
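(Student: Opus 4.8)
The plan is to reduce the statement to a known pointwise multiplier result on $C^2$-singular manifolds and then transfer it back to the periodic setting via the identification \eqref{S2: T=Q}. First I would unwind the definitions: the inner product $(u|v)_X$ is a fiberwise bilinear contraction, so the claim is really that fiberwise contraction, combined with the weighted norms $\|\cdot\|_{s,p;\vartheta_0}$ and $\|\cdot\|_{k,\infty;\vartheta_1}$, behaves like a pointwise product with the expected addition of weights $\vartheta_0+\vartheta_1$. Using \eqref{S2: T=Q}, it suffices to prove the corresponding statement for the spaces $W_p^{s,\vartheta_0}(\M,X)$, $BC^{k,\vartheta_1}(\M,X)$ and $BC^{k,\vartheta_0}(\M,X)$ on the $C^2$-singular manifold $(\M,g;\rho)$, which by Remark \ref{S2: M-sing mnfd} inherits all the weighted function space machinery of \cite{Ama13, AmaAr, Shao15}.

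Next I would handle the two cases separately, starting with the $BC$–$BC$ case since it is elementary: for $u\in BC^{k,\vartheta_0}$, $v\in BC^{k,\vartheta_1}$, one differentiates $(u|v)_X$ up to order $k$ using the Leibniz rule for $\nabla_g$, distributes the weight $\rho^{\vartheta_0+\vartheta_1+i}$ as $\rho^{\vartheta_0+j}\cdot\rho^{\vartheta_1+(i-j)}$ across each term $\nabla_g^j u$ contracted with $\nabla_g^{i-j}v$, and takes sup norms, noting that $\rho\in C^3(\M,(0,1])$ is bounded so no negative powers cause trouble. This yields the estimate $\|(u|v)_X\|_{k,\infty;\vartheta_0+\vartheta_1}\lesssim \|u\|_{k,\infty;\vartheta_0}\|v\|_{k,\infty;\vartheta_1}$ directly. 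For the $W_p$–$BC$ case with $s\in\N$, the same Leibniz expansion works: each term has $j\le s$ derivatives on $u$ measured in $L_p^{\vartheta_0}$ and at most $k$ derivatives on $v$ measured in $L_\infty^{\vartheta_1}$, and since $j+(i-j)=i\le s\le k$, the $BC^{k}$ bound on $v$ supplies enough regularity; Hölder with exponents $(p,\infty)$ then gives the $W_p^{s,\vartheta_0+\vartheta_1}$ bound.

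The main obstacle is the non-integer case $s\in\R_+\setminus\N$ for the $W_p$–$BC$ setting, where the Sobolev–Slobodeckii space is defined by real interpolation $(L_p^{\vartheta_0},W_p^{k,\vartheta_0})_{s/k,p}$ and one cannot simply differentiate. Here I would invoke bilinear interpolation: fixing $v\in BC^{k,\vartheta_1}_\pi(\QG,X)$, the map $u\mapsto (u|v)_X$ is, by the already-established integer cases, bounded $L_p^{\vartheta_0}\to L_p^{\vartheta_0+\vartheta_1}$ and $W_p^{k,\vartheta_0}\to W_p^{k,\vartheta_0+\vartheta_1}$ with norms controlled by $\|v\|_{k,\infty;\vartheta_1}$; the real interpolation functor $(\cdot,\cdot)_{s/k,p}$ then yields boundedness $W_p^{s,\vartheta_0}\to W_p^{s,\vartheta_0+\vartheta_1}$ with the same dependence on $v$, and bilinearity in $v$ is preserved under this construction. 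The only subtlety is checking that the interpolation scale of weighted spaces on the $C^2$-singular manifold is exactly the one realized by these target spaces — but this is precisely the content of the interpolation results for $\F^{s,\vartheta}(\M,X)$ recorded in \cite{Ama13, Shao15} and cited in Remark \ref{S2: M-sing mnfd}, so no new work is needed beyond assembling these pieces together with \eqref{S2: T=Q}.
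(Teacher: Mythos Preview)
Your proposal is correct and follows the same reduction as the paper: transfer to $(\M,g;\rho)$ via \eqref{S2: T=Q} and invoke the weighted pointwise-multiplier theory there. The paper's own proof is a one-line citation of \cite[Theorem~13.5]{AmaAr} together with \eqref{S2: T=Q}; you instead sketch the content of that theorem directly (Leibniz rule with weight splitting for integer orders, then real interpolation of the linear map $u\mapsto (u|v)_X$ with $v$ fixed for non-integer $s$). This is not a genuinely different route but rather an unpacking of the cited result, and your argument is sound as written.
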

\begin{proof}
This follows from \cite[Theorem~13.5]{AmaAr} and \eqref{S2: T=Q}.
\end{proof}

\begin{lem}\label{S2: interpolation}
Suppose that $k_i\in \N$, $\vartheta_i\in \R$ with $i=0,1$,
$0<\theta<1$ and $k_0<k_1$. Then 
$$
 \Bigl(W^{k_0,\vartheta_0}_{p,\pi}(\QG, X),
 W^{k_1,\vartheta_1}_{p,\pi}(\QG, X)\Bigr)_{\theta,p} \doteq
 W^{k_\theta,\vartheta_\theta}_{p,\pi}(\QG, X),
$$ 
where $\xi_\theta:=(1-\theta)\xi_0+\theta \xi_1$ for any
$\xi_0,\xi_1\in \R$ and the case $k_\theta\in \N$ needs to be
excluded.
\end{lem}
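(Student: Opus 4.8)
The plan is to transfer the assertion to the singular manifold $(\M,g;\rho)$ via the identification \eqref{S2: T=Q}, and then to run the interpolation through the \emph{localization} of weighted function spaces that underlies \cite{Ama13, AmaAr, Shao15}. Fix a uniformly regular atlas $\{(\Ok,\vpk)\}_\kappa$ for $(\M,g;\rho)$ adapted to the singularity datum, so that $\rho\sim\rho_\kappa$ on $\Ok$ for suitable constants $\rho_\kappa\in(0,1]$, the rescaled pulled‑back metrics are uniformly equivalent to the Euclidean one on a fixed model patch $\Qq$, and there is a fixed (i.e.\ $s$‑ and $\vartheta$‑independent) family of cut‑offs subordinate to the cover. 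Under the associated retraction/coretraction pair, the construction of \cite{Ama13, AmaAr} yields an isomorphism
$$
 \F^{s,\vartheta}(\M,X)\ \doteq\ \ell_p\Bigl(\bigl(\lambda_\kappa\rho_\kappa^{\vartheta}\bigr)_\kappa;\,\F^{s}(\Qq,X)\Bigr)
$$
($\ell_\infty$ in place of $\ell_p$ for $\F=BC$), where the distortion factors $\lambda_\kappa>0$, which absorb the Euclidean volume deformation along each chart, depend only on the atlas and the cut‑offs, and not on $(s,\vartheta)$.

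Since real interpolation commutes with retractions, it remains to interpolate the right‑hand sides. For weighted $\ell_p$‑sums of one fixed interpolation couple one has $\bigl(\ell_p(a_\kappa;E_0),\ell_p(b_\kappa;E_1)\bigr)_{\theta,p}\doteq\ell_p\bigl(a_\kappa^{1-\theta}b_\kappa^{\theta};(E_0,E_1)_{\theta,p}\bigr)$, while on the fixed model patch the classical identity $\bigl(W^{k_0}_p(\Qq,X),W^{k_1}_p(\Qq,X)\bigr)_{\theta,p}\doteq B^{k_\theta}_{p,p}(\Qq,X)$ holds (\cite{Trib92}), and $B^{k_\theta}_{p,p}(\Qq,X)\doteq W^{k_\theta}_p(\Qq,X)$ precisely when $k_\theta\notin\N$ --- this accounts for the excluded case. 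Combining, the weights recombine as $(\lambda_\kappa\rho_\kappa^{\vartheta_0})^{1-\theta}(\lambda_\kappa\rho_\kappa^{\vartheta_1})^{\theta}=\lambda_\kappa\rho_\kappa^{(1-\theta)\vartheta_0+\theta\vartheta_1}=\lambda_\kappa\rho_\kappa^{\vartheta_\theta}$ and the model order to $(1-\theta)k_0+\theta k_1=k_\theta$, so that the interpolation space is exactly the localization model of $W^{k_\theta,\vartheta_\theta}_p(\M,X)$. Transferring back through \eqref{S2: T=Q} gives the assertion on $\QG$.

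The step I expect to be the main obstacle is the bookkeeping inside the localization: one must check that a \emph{single} retraction/coretraction pair is admissible for both endpoint spaces, so that the retraction argument genuinely applies to the couple --- which is exactly why it matters that the atlas, the cut‑offs and the factors $\lambda_\kappa$ are independent of $(s,\vartheta)$, with the only endpoint dependence sitting in the scalar weights $\lambda_\kappa\rho_\kappa^{\vartheta_i}$ and in the model exponents $k_i$, both of which interpolate by the elementary formulas above. All these ingredients are contained in \cite{Ama13, AmaAr, Shao15}; alternatively, one may simply invoke the interpolation theorem for weighted spaces with variable weight on $C^2$‑singular manifolds proved there, together with \eqref{S2: T=Q}.
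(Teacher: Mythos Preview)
Your proposal is correct and follows essentially the same approach as the paper: the paper's proof consists of invoking the interpolation theorem for weighted spaces on singular manifolds from \cite{AmaAr} (specifically Theorem~8.2(i) and formulas (8.3), (21.2)) together with the identification \eqref{S2: T=Q}. You have simply unpacked the localization/retraction argument that underlies that cited result, and you even note the direct citation as an alternative at the end.
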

\begin{proof}
It follows from \cite[Theorem~8.2(i), formulas (8.3), 
(21.2)]{AmaAr} and \eqref{S2: T=Q}.
\end{proof}

\begin{prop}\label{S2: Sobolev embedding}
Suppose that  $s>k+\frac{N}{p}$ and $\vartheta\in\R$. Then
$$
 W_{p,\pi}^{s,\vartheta}(\QG ,X) \hookrightarrow
 BC^{k,\vartheta+\frac{N}{p}}_\pi(\QG ,X).
$$
\end{prop}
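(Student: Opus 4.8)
The plan is to transfer the statement to the singular manifold picture via the identification \eqref{S2: T=Q}, so that it suffices to prove the embedding $W_p^{s,\vartheta}(\M,X)\hookrightarrow BC^{k,\vartheta+\frac{N}{p}}(\M,X)$ on the $C^2$-singular manifold $(\M,g;\rho)$. First I would reduce to the case $\vartheta=-\frac{N}{p}$ by using Lemma \ref{S2: change of wgt}: multiplication by $\rho^{\vartheta+\frac{N}{p}}$ is an isomorphism $W_p^{s,\vartheta}\to W_p^{s,-N/p}$ and, on the $BC$ side, an isomorphism $BC^{k,\vartheta+N/p}\to BC^{k,0}$. So the claim becomes the unweighted-looking Sobolev embedding $W_p^{s,-N/p}(\M,X)\hookrightarrow BC^{k}(\M,X)$, with the weight exponent $-N/p$ being precisely the ``critical'' one that makes the local Sobolev scaling balance.

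Next I would invoke the uniformly regular / singular-manifold structure: by Remark \ref{S2: M-sing mnfd}, $(\M,g;\rho)$ is a $C^2$-singular manifold, so it admits a singular atlas with a subordinate localization system and the weighted spaces $\F^{s,\vartheta}(\M,X)$ are retracts of corresponding sequence spaces $\ell_p\big(\F^s(\Q_\kappa,X)\big)$ built from the local patches, with the weight $\rho$ comparable to the constant $\rho_\kappa$ on each patch $\Q_\kappa$ (up to a fixed factor). The key point is that, after rescaling each patch to the unit cube (or a ball of fixed size), the weight exponent $-N/p$ compensates exactly the Jacobian factor $\rho_\kappa^{N}$ coming from the volume element, so that the local norms transform into the \emph{standard} (unweighted) Sobolev norm $\|\cdot\|_{W_p^s(\Q,X)}$ and the local sup norm into the standard $BC^k(\Q,X)$ norm, uniformly in $\kappa$. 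Then one applies the classical Sobolev embedding $W_p^s(\Q,X)\hookrightarrow BC^k(\Q,X)$ for $s>k+\frac{N}{p}$ on the model cube, with a constant independent of $\kappa$, and reassembles: the uniform local estimates plus the retraction/co-retraction pair give the global embedding. Finally, undoing the reduction of the first step via Lemma \ref{S2: change of wgt} yields the stated embedding with the shifted weight $\vartheta+\frac{N}{p}$ on the $BC$ side.

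The main obstacle is the bookkeeping of weights under the rescaling of the local patches: one must check that the exponent shift $\vartheta\mapsto\vartheta+\frac{N}{p}$ is exactly what is produced by the $\rho_\kappa^{N/p}$ factor from the volume form in the $L_p$ norm (and that no such factor enters the $L_\infty$ norm, which is why there is no shift in $k$ but a shift in $\vartheta$), and that the singular atlas can be chosen with uniformly bounded geometry so that the model-cube Sobolev constant is uniform. All of this is available in the cited references (\cite{Ama13, AmaAr, Shao15}) for $C^2$-singular manifolds, so the argument is a matter of assembling those ingredients; alternatively, one can cite the corresponding embedding theorem there directly and only verify that the normalization conventions match those of \eqref{S2.2: def of BC} and of the norm $\|\cdot\|_{k,p;\vartheta}$ used here.
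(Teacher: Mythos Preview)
Your proposal is correct and aligns with the paper's approach: the paper's proof consists solely of citing \cite[Theorem~14.2(ii)]{Ama13} together with the identification \eqref{S2: T=Q}, which is exactly the ``alternative'' you mention at the end. The localization/retraction argument you sketch is essentially the content of that cited theorem, so you are unpacking what the paper simply quotes; your reduction via Lemma~\ref{S2: change of wgt} and the bookkeeping of the $\rho_\kappa^{N/p}$ volume factor are the right ingredients and match Amann's conventions.
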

\begin{proof}
See \cite[Theorem~14.2(ii)]{Ama13} and \eqref{S2: T=Q}.
\end{proof}

\subsection{Maximal Regularity of Type $L_p$}

In this subsection, we will state some preliminary concepts and results
of maximal $L_p$-regularity for differential operators and their
application to quasi-linear parabolic equations. The reader is
referred to \cite{Ama95}, \cite{ArenGrohNage86}, and
\cite{DenHiePru03} for more details about these concepts.

We consider the following abstract Cauchy problem 
\begin{equation}
\label{S2.2: Cauchy problem}
\left\{\begin{aligned}
\partial_t u(t) +\cA u(t) &=f(t), &&t\geq 0\\
u(0)&=0 . &&
\end{aligned}\right. 
\end{equation}
For $\theta\in (0,\pi]$, the open sector of angle $2\theta$ is denoted
by 
$$
 \Sigma_\theta:= \{\omega\in \mathbb{C}\setminus \{0\}: |\arg
 \omega|<\theta \}.
$$
\begin{definition}
Let $X$ be a complex Banach space, and $\cA$ be a densely defined
closed linear operator in $X$ with dense range. $\cA$ is called
sectorial if $\Sigma_\theta \subset \rho(-\cA)$ for some $\theta>0$
and 
$$
 \sup\{|\mu(\mu+\cA)^{-1}| : \mu\in \Sigma_\theta \}<\infty.
$$
The class of sectorial operators in $X$ is denoted by $\S(X)$.
The spectral angle $\phi_\cA$ of $\cA$ is defined by
$$
\phi_\cA:=\inf\{\phi:\, \Sigma_{\pi-\phi}\subset \rho(-\cA),\, 
\sup\limits_{\mu\in \Sigma_{\pi-\phi}} |\mu(\mu+\cA)^{-1}|<\infty. \}. 
$$ 
\end{definition}

\begin{definition}
Assume that $X_1\overset{d}{\hookrightarrow}X_0$ is some densely
embedded pair of Banach spaces. Suppose that $\cA\in \S(X_0)$ with
$dom(\cA)=X_1$. Then, the Cauchy problem \eqref{S2.2: Cauchy problem}
is said to have maximal $L_p$-regularity if, for any 
$$
 f\in L_p(\R_+,X_0),
$$
equation \eqref{S2.2: Cauchy problem} has a unique solution
$$
 u\in L_p(\R_+, X_1) \cap H^1_p(\R_+, X_0) .
$$
We denote this by 
$$
 \cA\in \mathcal{MR}_p(X_1, X_0).
$$
\end{definition}

Maximal regularity theory has proven a powerful tool in the
theory of nonlinear parabolic equations. We will apply it to the study
of problem~\eqref{P-M eq}. To this end, let us consider the following
abstract evolution equation 
\begin{equation}\label{S2.2: evolution eq}
\begin{cases}
 \partial_t u +\cA(u) u =f(u), &t\geq 0,\\
 u(0)=0,&
\end{cases} 
\end{equation}
in $X_0$. We have the following existence and uniqueness result for
equation~\eqref{S2.2: evolution eq}. 
\begin{theorem}
\label{S2.2: Thm-MR}
\cite[Theorem~2.1]{CleLi93}
Let $1<p<\infty$ and $X_1\overset{d}{\hookrightarrow}X_0$ be a densely
embedded pair of Banach spaces. Setting $X_{1/p}:=(X_0,X_1)_{1-1/p,p}$,
suppose that $U\subset X_{1/p}$ is open and  that $\cA,f$ satisfy 
$$
 (\cA, f)\in C^{1-}\bigl(U,\mathcal{MR}_p(X_1,X_0)\times X_0\bigr)
$$
Then for every $u_0\in U$, there exist $T=T(u_0)>0$ and a unique
solution of \eqref{S2.2: evolution eq} on $J=[0,T]$ with
$$
 u\in L_p(J, X_1)\cap H^1_p(J, X_0).
$$
\end{theorem}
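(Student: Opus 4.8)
The plan is the standard reduction of the quasilinear problem \eqref{S2.2: evolution eq}, with initial value $u_0\in U$, to a fixed point equation governed by the linear problem with frozen coefficient $\cA_0:=\cA(u_0)$, solved by maximal $L_p$‑regularity together with the contraction mapping principle (this is the scheme of \cite{CleLi93}).

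Fix $T_0>0$ and work on $J=[0,T]$ with $T\in(0,T_0]$. Write $\mathbb{E}(J):=L_p(J,X_1)\cap H^1_p(J,X_0)$; this space embeds continuously into $BUC(J,X_{1/p})$, and on the closed subspace of functions vanishing at $t=0$ both this embedding constant and the maximal regularity constant of $\cA_0\in\mathcal{MR}_p(X_1,X_0)$ can be chosen independent of $T\in(0,T_0]$ (extend by zero to $[0,T_0]$). Since $f(u_0)\in X_0$ and $X_{1/p}$ is the temporal trace space of $\mathbb{E}$, the linear problem $\partial_t u^*+\cA_0u^*=f(u_0)$, $u^*(0)=u_0$, has a unique solution $u^*\in\mathbb{E}([0,T_0])$, and $u^*(t)\to u_0$ in $X_{1/p}$ as $t\to 0$. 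For $r>0$ set
$$
 \Sigma_T:=\{v\in\mathbb{E}(J):v(0)=u_0,\ \|v-u^*\|_{\mathbb{E}(J)}\le r\},
$$
a nonempty closed subset of an affine subspace of $\mathbb{E}(J)$. Since $\|v-u^*\|_{BUC(J,X_{1/p})}\le Cr$ on $\Sigma_T$ and $u^*(t)$ stays near $u_0$ for small $t$, fixing a bounded ball $B\subset U$ around $u_0$ on which $\cA$ and $f$ are Lipschitz with constant $L$, and choosing first $r$ then $T$ small, guarantees $v(t)\in B$ for all $v\in\Sigma_T$ and $t\in J$.

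Define $\Phi\colon\Sigma_T\to\mathbb{E}(J)$ by letting $\Phi(v)=u$ be the maximal regularity solution of
$$
 \partial_t u+\cA_0 u=\cA_0 v-\cA(v)v+f(v),\qquad u(0)=u_0 ;
$$
the right‑hand side lies in $L_p(J,X_0)$ because $v\in L_p(J,X_1)$ while $\cA_0$ and $t\mapsto\cA(v(t))$ are bounded in $\mathcal{L}(X_1,X_0)$, and $f(v(\cdot))\in C(J,X_0)$, so $\Phi$ is well defined; moreover $u\in\Sigma_T$ is a fixed point of $\Phi$ if and only if it solves \eqref{S2.2: evolution eq} with $u(0)=u_0$. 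To see that $\Phi$ maps $\Sigma_T$ into itself, note that $\Phi(v)-u^*$ has zero initial trace and solves the linear equation with right‑hand side $[\cA(u_0)-\cA(v)]v+f(v)-f(u_0)$; the $T$‑uniform maximal regularity estimate together with the Lipschitz bounds $\|\cA(u_0)-\cA(v(t))\|_{\mathcal{L}(X_1,X_0)}\le L\|u_0-v(t)\|_{X_{1/p}}$ and $\|f(v(t))-f(u_0)\|_{X_0}\le L\|u_0-v(t)\|_{X_{1/p}}$, the bound $\|u_0-v(\cdot)\|_{BUC(J,X_{1/p})}\le\omega(T)+Cr$ with $\omega(T):=\|u^*-u_0\|_{BUC([0,T],X_{1/p})}$, and $\|v\|_{L_p(J,X_1)}\le r+\|u^*\|_{L_p(J,X_1)}$, yield
$$
 \|\Phi(v)-u^*\|_{\mathbb{E}(J)}\le CL\bigl(\omega(T)+Cr\bigr)\bigl(r+\|u^*\|_{L_p(J,X_1)}+T^{1/p}\bigr).
$$
Since $\omega(T)$, $\|u^*\|_{L_p([0,T],X_1)}$ and $T^{1/p}$ all tend to $0$ as $T\downarrow 0$, one first picks $r$ small and then $T$ small so that the right‑hand side is $\le r$. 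An entirely analogous computation — after writing $[\cA(u_0)-\cA(v_1)]v_1-[\cA(u_0)-\cA(v_2)]v_2=[\cA(u_0)-\cA(v_1)](v_1-v_2)+[\cA(v_2)-\cA(v_1)]v_2$ — gives, for $v_1,v_2\in\Sigma_T$,
$$
 \|\Phi(v_1)-\Phi(v_2)\|_{\mathbb{E}(J)}\le CL\bigl(\omega(T)+Cr+\|u^*\|_{L_p(J,X_1)}+T^{1/p}\bigr)\|v_1-v_2\|_{\mathbb{E}(J)},
$$
so after a further shrinking of $r$ and $T$ the map $\Phi$ is a contraction of $\Sigma_T$, and Banach's fixed point theorem produces the desired $u\in L_p(J,X_1)\cap H^1_p(J,X_0)$.

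For uniqueness in $\mathbb{E}(J)$ one uses that any solution $w$ with $w(0)=u_0$ satisfies $\|w-u^*\|_{\mathbb{E}([0,T'])}\to 0$ as $T'\downarrow 0$ (elements of $\mathbb{E}$ with vanishing initial trace have vanishing norm on shrinking intervals), hence $w\in\Sigma_{T'}$ for $T'$ small and therefore $w=u$ on $[0,T']$; setting $t_0:=\sup\{t\in J:w=u\text{ on }[0,t]\}$, continuity gives $w(t_0)=u(t_0)\in U$, and if $t_0<T$ one restarts the construction at time $t_0$ to reach a contradiction, so $w=u$ on all of $J$ (shrinking $T$ once more if needed so that the original construction covers $[0,T]$). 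The main obstacle I expect is the bookkeeping in the self‑mapping and contraction estimates: one must keep careful track of which terms are small because $r$ is small and which are small because $T$ is small, and this is precisely what forces the reduction to functions with vanishing initial trace, so that the maximal regularity constant of $\cA_0$ and the trace embedding constant are available uniformly in $T\in(0,T_0]$.
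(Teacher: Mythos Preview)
The paper does not give its own proof of this theorem; it is simply quoted as \cite[Theorem~2.1]{CleLi93} and used as a black box. Your argument is precisely the standard Cl\'ement--Li scheme (freeze the coefficient at $u_0$, use maximal $L_p$-regularity of $\cA(u_0)$ to build a reference solution $u^*$, then run a contraction on a small ball in $\mathbb{E}(J)$ around $u^*$ with the $T$-uniform constants obtained by reducing to zero initial trace), and it is correct in outline and in the handling of the self-mapping and contraction estimates. One minor remark: equation~\eqref{S2.2: evolution eq} as printed has $u(0)=0$, which is evidently a typo for $u(0)=u_0$; you have (correctly) proved the version with general $u_0\in U$, which is what the theorem statement and the later applications require.
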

\section{\bf Local well-posedness of the Nonlinear Model}

In this section, we will establish the existence and uniqueness of
solutions to \eqref{P-M eq}. 
The precise definition of the fractional gradient $\nabla^{1-\varepsilon}$ in
the one and two dimensional cases will be stated separately in the
following two subsections. In order to allow for non-smooth initial
data for \eqref{P-M eq} and the corresponding degeneration in the
diffusion coefficient they cause, it is necessary to resort to
weighted space. We put 
$$
 E_0:=L^{\vartheta+2\varepsilon}_{p,\pi}(\QG), \quad
 E_1:=W^{2,\vartheta}_{p,\pi}(\QG),
$$
and 
$$
E_{\frac{1}{p}}:=(E_0,E_1)_{1-1/p,p}=W^{2-2/p,\vartheta+\frac{2\varepsilon}{p}}_{p,\pi}(\QG).
$$
Throughout the rest of this section, we always assume
\begin{equation}\label{S3: ASP: vartheta & p}
\vartheta\leq -2,\: p>\max\{ \frac{N+2}{\varepsilon} ,
-\frac{N+2\varepsilon}{\vartheta}\},\:\varepsilon\neq \frac{1}{2}. 
\end{equation}
or
\begin{equation}\label{S3: ASP: vartheta & p-2}
\vartheta=-2\varepsilon,\: p>\max\{ \frac{2N+2}{\varepsilon},
\frac{4N+5}{2}\},\: \varepsilon>1-\frac{1}{2p}. 
\end{equation}
Conditions \eqref{S3: ASP: vartheta & p} and \eqref{S3: ASP: vartheta
  & p-2} above are imposed in order for 
technically necessary embeddings to be valid. Notice that the first
condition allows for more freedom in the choice of $\varepsilon$, whereas the
second will make it possible to obtain stronger results (see Section
4). 

If \eqref{S3: ASP: vartheta & p} holds, then it is
not hard to verify by the definition of $W^{k,\vartheta}_p(\QG)$ in
Section~2.2 and the choice of $\vartheta$ that
\begin{equation*}
E_0\hookrightarrow L_{p,\pi}(\Q)\text{ and }
E_1\hookrightarrow W^2_{p,\pi}(\Q). 
\end{equation*}
Interpolation theory implies that
\begin{equation}
\label{S3: Trace space embedding}
E_{\frac{1}{p}}\hookrightarrow W^{2-2/p}_{p,\pi}(\Q).
\end{equation}
We define $\R_\Gamma$ to be the set of functions which are a constant
on each component of $\QG$. 

\begin{lem}\label{S3: reduction p/2}
Assume that \eqref{S3: ASP: vartheta & p-2} is satisfied. Then
$$
 E_1\hookrightarrow W^2_{p/2,\pi}(\Q).
$$
\end{lem}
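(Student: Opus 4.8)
The plan is to deduce the embedding from a single application of Hölder's inequality against a fixed negative power of the singular weight $\d$, the admissibility of which rests on the elementary fact that $\d^{a}\in L_p(\Q)$ if and only if $a>-1$, valid because $\Gamma$ is a compact codimension-$1$ $C^{2}$-hypersurface. There is no real obstacle; the only point requiring a moment's care is this integrability threshold, and it is precisely where the third condition $\varepsilon>1-\tfrac{1}{2p}$ in \eqref{S3: ASP: vartheta & p-2} gets used.

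First I would record what the two norms are. Since $2\in\Nz$, the space $E_1=W^{2,-2\varepsilon}_{p,\pi}(\QG)$ carries the explicit norm $\|u\|_{2,p;-2\varepsilon}=\bigl(\sum_{i=0}^{2}\|\d^{\,i-2\varepsilon}\,|\nabla^i u|\,\|_p^p\bigr)^{1/p}$, while $\|\cdot\|_{W^2_{p/2,\pi}(\Q)}$ is equivalent to $\sum_{i=0}^{2}\|\,|\nabla^i u|\,\|_{L_{p/2}(\Q)}$. I would work first on a test function $u\in\mathcal{D}(\QG)$, which is dense in $E_1$ by the very definition of $E_1$ as a completion, and which, vanishing in a neighbourhood of $\Gamma$, already lies in $C^\infty_\pi(\Q)\subset W^2_{p/2,\pi}(\Q)$. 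For $i=0,1,2$ write
\[
\nabla^i u=\d^{\,2\varepsilon-i}\,\bigl(\d^{\,i-2\varepsilon}\nabla^i u\bigr),
\]
and apply Hölder on $(\Q,g_N)$ with the exponent pair $(2,2)$ furnished by $\tfrac{1}{p/2}=\tfrac1p+\tfrac1p$:
\[
\|\,|\nabla^i u|\,\|_{L_{p/2}(\Q)}\le\|\d^{\,2\varepsilon-i}\|_{L_p(\Q)}\;\|\d^{\,i-2\varepsilon}\,|\nabla^i u|\,\|_{L_p(\Q)}\le\|\d^{\,2\varepsilon-i}\|_{L_p(\Q)}\;\|u\|_{2,p;-2\varepsilon}.
\]

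It then remains to check that the three constants $\|\d^{\,2\varepsilon-i}\|_{L_p(\Q)}$, $i=0,1,2$, are finite, i.e. $\int_\Q\d^{\,(2\varepsilon-i)p}\,dx<\infty$. On the compact set $\Q\setminus\mathscr{U}_\delta$ the weight $\d$ is bounded below by a positive constant, so the integral there is harmless; on $\mathscr{U}_\delta\setminus\Gamma$ one has $\d=\mathrm{dist}(\cdot,\Gamma)$, and the tubular-neighbourhood (co-area) description of Lebesgue measure shows $\int_{\mathscr{U}_\delta}\mathrm{dist}(x,\Gamma)^{\,a}\,dx<\infty$ if and only if $\int_0^{\delta}r^{\,a}\,dr<\infty$, i.e. $a>-1$. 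Hence the $i$-th integral converges iff $(2\varepsilon-i)p>-1$, that is $\varepsilon>\tfrac i2-\tfrac{1}{2p}$: automatic for $i=0$; implied for $i=1$ by $\varepsilon>1-\tfrac{1}{2p}>\tfrac12-\tfrac{1}{2p}$; and for $i=2$ exactly the assumption $\varepsilon>1-\tfrac{1}{2p}$ in \eqref{S3: ASP: vartheta & p-2}. Summing the three estimates gives $\|u\|_{W^2_{p/2,\pi}(\Q)}\le C\|u\|_{2,p;-2\varepsilon}$ for every $u\in\mathcal{D}(\QG)$.

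Finally I would pass to the completion. Given $u\in E_1$, pick $u_n\in\mathcal{D}(\QG)$ with $u_n\to u$ in $E_1$; then $u_n\to u$ also in $E_0=L^{0}_{p,\pi}(\QG)\doteq L_{p,\pi}(\Q)$ (here $\vartheta+2\varepsilon=0$, and the embedding $E_1\hookrightarrow E_0$ is the trivial $i=0$ case with $\|\d^{2\varepsilon}\|_\infty\le1$ in place of Hölder), while by the estimate just proved $(\nabla^i u_n)_n$ is Cauchy, hence convergent, in $L_{p/2}(\Q)$. Because each $u_n$ is smooth and periodic on all of $\Q$, integration by parts against $C^\infty_\pi(\Q)$ test functions passes to the limit and identifies these $L_{p/2}$-limits with the distributional derivatives $\nabla^i u$ on $\Q$ (and not merely on $\QG$); thus $u\in W^2_{p/2,\pi}(\Q)$ with the asserted norm bound, and the embedding is injective since it already is at the level of $E_0\hookrightarrow L_{p/2,\pi}(\Q)$. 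The only step deserving care is the weight integrability $\int_\Q\d^{a}\,dx<\infty\iff a>-1$, which is where the collar structure of the compact hypersurface $\Gamma$, and thereby the hypothesis \eqref{S3: ASP: vartheta & p-2}, enter; everything else is Hölder plus a routine density argument.
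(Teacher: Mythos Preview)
Your proof is correct. The decisive step --- splitting $\nabla^2 u=\d^{\,2\varepsilon-2}(\d^{\,2-2\varepsilon}\nabla^2 u)$ and applying H\"older with the integrability threshold $(2\varepsilon-2)p>-1$, i.e.\ $\varepsilon>1-\tfrac{1}{2p}$ --- is exactly the paper's argument. The difference lies only in the treatment of the lower-order terms and of the passage from $\QG$ to $\Q$: the paper first invokes the weighted Sobolev embedding of Proposition~\ref{S2: Sobolev embedding} to get $E_1\hookrightarrow BC^{1,-2\varepsilon+N/p}_\pi(\QG)$, infers that $u$ and $\nabla u$ have zero trace on $\Gamma$ and hence extend continuously across it, obtaining the intermediate conclusion $E_1\hookrightarrow BC^1_\pi(\Q)$; only $\nabla^2 u$ is then handled by H\"older. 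You instead run the same H\"older splitting uniformly for $i=0,1,2$ and close by a density argument in $W^2_{p/2,\pi}(\Q)$. Your route is more self-contained for this lemma, while the paper's yields the extra embedding $E_1\hookrightarrow BC^1_\pi(\Q)$ as a by-product.
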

\begin{proof}
First note that for $p$ satisfying \eqref{S3: ASP: vartheta & p-2}, by
Proposition~\ref{S2: Sobolev embedding}, one has that
$$
E_1 \hookrightarrow BC^{1, \vartheta +\frac{N}{p}}_\pi(\QG).
$$
Therefore,  any $u\in E_1$ admits a smooth trace $\gamma_\Gamma(u)$
and $\gamma_\Gamma(u)=0$; similarly $\gamma_\Gamma(|\nabla u|)=0$, in
view of the assumptions on the parameter $\vartheta$. 
These estimates imply that, on each component of $\QG$, $u$ and $\nabla u$ 
admit unique continuous extensions onto $\Gamma$ and thus that $E_1
\hookrightarrow BC^1_\pi(\Q)$.

Pick $q<p$. First, it is clear that
$E_1\hookrightarrow L_{q,\pi}(\Q) $ since, by definition, 
\begin{align*}
 \int_{\Q} |\nabla^2 u|^q\, dx &\leq \int_{\Q}
 |\d^{2\varepsilon-2}\d^{2-2\varepsilon}\nabla^2u|^q\, dx\\
 &\leq  \Big[\int_{\Q}|\d^{2\varepsilon-2}|^{\frac{qp}{p-q}}\,
   dx\Big]^{\frac{p-q}{p}}
  \Big[\int_{\Q} |\d^{2-2\varepsilon}\nabla^2 u|^p\, dx\Big]^{q/p}.
\end{align*}
To make the first term on the right hand side of the inequality
finite, it suffices to require that
$(2\varepsilon-2)\frac{qp}{p-q}>-1$. This is clear for
  $N=1$ where the singularity is at isolated points, whereas
for $N=2$ it follows from the fact that the singularity is along a
smooth curve. The above parameter inequality is equivalent to 
$$
 \varepsilon>1-\frac{1}{2q}+ \frac{1}{2p}.
$$
Taking $q=p/2$ yields $\varepsilon>1-\frac{1}{2p}.$  	
\end{proof}
The assumption $p>\frac{4N+5}{2}$ in \eqref{S3: ASP: vartheta & p-2}
guarantees that $p>\frac{2N+2}{\varepsilon}$
and $\varepsilon>1-\frac{1}{2p}$ do not conflict.

\subsection{One Dimensional Case} 

Since we are working with periodic functions on $\Qq =[-1,1)$, we
can define $\nabla^{1-\varepsilon}=\partial^{1-\varepsilon}$ by means
of Fourier series 
$$
 \partial^{1-\varepsilon}u
 :=|\partial|^{-\varepsilon}u^\prime:=\mathcal{F}^{-1}{\rm
  diag}\{\frac{i\pi k}{|k|^\varepsilon}\}\mathcal{F} u,
$$
where $\mathcal{F}$ denotes the Fourier transform and ${\rm
  diag}\{\frac{i\pi k}{|k|^\varepsilon}\}$  denotes the multiplication
operator (in Fourier space) by the function $[k\mapsto\frac{i\pi
  k}{|k|^\varepsilon}]$.
\begin{lem}\label{S3.1: Sing ker of frac grad-ID}
$$
 \partial^{1-\varepsilon}u(x)=\int_{\Qq} c_\varepsilon
 \frac{u^\prime(y)}{|x-y|^{1-\varepsilon}}\, dy +\int_{\Qq}
 h_\varepsilon(x-y)  u^\prime(y)\, dy,
$$
for some constant $c_\varepsilon>0$ and $h_\varepsilon\in C^\infty$.
\end{lem}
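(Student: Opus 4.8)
The plan is to compute the Fourier multiplier explicitly and recognize it, up to a smooth remainder, as the kernel of a Riesz-type potential on the circle. First I would write $\partial^{1-\varepsilon}u = |\partial|^{-\varepsilon} u'$ and focus on the operator $|\partial|^{-\varepsilon}$, which acts on a $2$-periodic function $v$ (here $v = u'$) by $\mathcal{F}^{-1}\mathrm{diag}\{|k|^{-\varepsilon}\}\mathcal{F} v$, with the convention that the $k=0$ mode is annihilated (so really the multiplier is $|k|^{-\varepsilon}$ for $k\neq 0$ and $0$ for $k=0$). Equivalently, $|\partial|^{-\varepsilon} v = K_\varepsilon * v$ where $K_\varepsilon$ is the $2$-periodic distribution with Fourier coefficients $\widehat{K_\varepsilon}(k) = |k|^{-\varepsilon}$ for $k\neq 0$ and $0$ for $k=0$, and the convolution is over $\Qq=[-1,1)$.

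The key step is to identify the singular part of $K_\varepsilon$. On $\R$, the function $|x|^{\varepsilon-1}$ has (distributional) Fourier transform a constant multiple of $|\xi|^{-\varepsilon}$ (for $0<\varepsilon<1$); periodizing, the $2$-periodic function $\sum_{m\in\mathbb{Z}} |x-2m|^{\varepsilon-1}$ has Fourier coefficients a constant multiple of $|k|^{-\varepsilon}$ (this is the Poisson summation / periodization identity, valid since $\varepsilon-1>-1$ makes the local singularity integrable and the sum converges away from the lattice). I would set $c_\varepsilon$ to be exactly this constant, times the normalization coming from our $\mathcal{F}$. Then the difference $h_\varepsilon(x) := K_\varepsilon(x) - c_\varepsilon |x|^{\varepsilon-1}$ on $\Qq$ has Fourier coefficients that are $O(|k|^{-\varepsilon} - (\text{something decaying like } |k|^{-N} \text{ for all } N))$; more precisely, the coefficients of $c_\varepsilon\sum_m |x-2m|^{\varepsilon-1}$ agree with $c_\varepsilon|k|^{-\varepsilon}$ exactly (after subtracting the $k=0$ term, which is a constant absorbed into $h_\varepsilon$), so the discrepancy between $K_\varepsilon$ and $c_\varepsilon|x|^{\varepsilon-1}$ on the fundamental domain is precisely the tail $\sum_{m\neq 0} |x-2m|^{\varepsilon-1}$ restricted to $|x|\leq 1$, plus the constant, and that tail is a smooth function of $x$ on $[-1,1]$ since each summand is smooth there and the series converges in $C^\infty$. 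This is the function $h_\varepsilon \in C^\infty$, and $c_\varepsilon>0$ because the Fourier transform of $|x|^{\varepsilon-1}$ carries a positive constant for $0<\varepsilon<1$.

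Finally I would assemble: $\partial^{1-\varepsilon}u(x) = (|\partial|^{-\varepsilon} u')(x) = \int_{\Qq} K_\varepsilon(x-y)\, u'(y)\, dy = c_\varepsilon \int_{\Qq} \frac{u'(y)}{|x-y|^{1-\varepsilon}}\, dy + \int_{\Qq} h_\varepsilon(x-y)\, u'(y)\, dy$, which is the claimed formula. The main obstacle I expect is the careful bookkeeping of constants and of the $k=0$ mode: one must check that the projection killing the zero Fourier mode (implicit in the definition of $|\partial|^{-\varepsilon}$) only contributes a constant, which is harmless since it is smooth and gets folded into $h_\varepsilon$; and one must verify the periodization identity rigorously (e.g. by Poisson summation applied to a test function, or by directly computing $\int_{-1}^{1} e^{-i\pi k x} |x|^{\varepsilon-1}\, dx$ and comparing its large-$k$ asymptotics, using that $\int_{\R} e^{-i\pi k x}|x|^{\varepsilon-1}dx = c\,|k|^{-\varepsilon}$ and that the difference is the Fourier coefficient of something smooth). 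Everything else — convergence of the tail series in $C^\infty$, integrability of the local singularity since $1-\varepsilon<1$ — is routine.
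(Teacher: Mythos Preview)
Your main line of argument has a genuine gap: the periodization $\sum_{m\in\mathbb{Z}} |x-2m|^{\varepsilon-1}$ that you invoke does \emph{not} converge away from the lattice. For fixed $x\in(-1,1)$ the terms satisfy $|x-2m|^{\varepsilon-1}\sim (2|m|)^{\varepsilon-1}$ as $|m|\to\infty$, and since $0<\varepsilon<1$ gives $\varepsilon-1\in(-1,0)$, the series $\sum_m |m|^{\varepsilon-1}$ diverges. Consequently your tail $\sum_{m\neq 0}|x-2m|^{\varepsilon-1}$ is not a function at all, let alone a $C^\infty$ one, and the Poisson summation step as you state it is not justified. (The subtraction of the $k=0$ mode you mention removes only a finite constant and does not cure this divergence.)

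The paper's proof deals with exactly this obstruction by inserting a smooth cutoff $\eta$ vanishing near the origin, writing $G_\varepsilon=\sum_k \eta(k)|k|^{-\varepsilon}e^{\pi ikx}$ and applying Poisson summation to $\eta|\cdot|^{-\varepsilon}$ instead of to $|\cdot|^{-\varepsilon}$. Because $\eta|\cdot|^{-\varepsilon}$ is smooth, its Fourier transform $g_\varepsilon$ is rapidly decreasing, so the translated sum $\sum_{k\neq 0} g_\varepsilon(\cdot+k)$ does converge in $C^\infty$; and because $(\eta-1)|\cdot|^{-\varepsilon}$ is compactly supported, one still recovers $g_\varepsilon=c_\varepsilon|\cdot|^{\varepsilon-1}+(\text{smooth})$. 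This cutoff is the missing ingredient in your approach. Your alternative suggestion of directly analyzing $\int_{-1}^1 e^{-i\pi kx}|x|^{\varepsilon-1}\,dx$ could be made to work, but note that $\int_{|x|>1}|x|^{\varepsilon-1}e^{-i\pi kx}\,dx$ is not absolutely convergent either, so that route also requires a regularization or an integration-by-parts argument you have not supplied.
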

\begin{proof}
By definition, one has that
$$
 \partial ^{1-\epsilon}u(x)=\int_{\Qq}G_\epsilon(x-y)u'(y)\,dy, 
$$
with
$$
 \widehat G_\epsilon (k)=\frac{1}{|k|^\epsilon},\: k\in
 \mathbb{Z}^*:=\mathbb{Z}\setminus\{ 0\}.
$$
This means that
$$
 G_\epsilon(x)=\sum _{k\in\mathbb{Z}^*}\frac{1}{|k|^\epsilon}e^{\pi
 ikx}=\sum _{k\in\mathbb{Z}}\frac{\eta(k)}{|k|^\epsilon}e^{\pi ikx},
$$
where $\eta\in C^\infty(\mathbb{R})$ is a cut-off function with
$$
 \eta(x)=\begin{cases} 0,&|x|\leq 1/4,\\ 1,&|x|\geq 1/2.\end{cases}
$$
Notice that the Poisson's summation formula
  (\cite[p. 362]{Stein93}) yields
$$
 G_\epsilon(x)=\sum
 _{k\in\mathbb{Z}}\frac{\eta(k)}{|k|^\epsilon}e^{\pi
   ikx}=g_\epsilon(x)+\sum 
 _{k\in \mathbb{Z}^*}g_\epsilon(x+k),\: x\in (-1,1),
$$
where $g_\epsilon=\mathcal{F}\bigl(\eta |\cdot |^{-\epsilon}\bigr)$ is rapidly
decreasing (faster than the reciprocal of any polynomial) as the
Fourier transform of a smooth function, and satisfies 
$$
 g_\epsilon=c_\epsilon |\cdot|^{\epsilon -1}+\mathcal{F}\bigl([\eta-1] |\cdot
 |^{-\epsilon}\bigr),\: x\in \mathbb{R}, 
$$
where the second addend is a smooth function as the Fourier transform
of a compactly supported function. Here the fact that $\mathcal{F}\bigl( |\cdot
|^{-\epsilon}\bigr)=c_\epsilon |\cdot |^{\epsilon -1}$ was also used. Combining
everything together yields the claimed decomposition with
$$
 h_\epsilon=\mathcal{F}\bigl([\eta-1] |\cdot|^{-\epsilon}\bigr)+\sum _{k\in
   \mathbb{Z}^*}g_\epsilon(\cdot+k).
$$
\end{proof}

\begin{lem}\label{S3.1: fractional grad-1D}
Assume that \eqref{S3: ASP: vartheta & p} or 
\eqref{S3: ASP: vartheta & p-2} is satisfied. Then
$$
 \partial^{1-\varepsilon}\in \L(E_{\frac{1}{p}}, BC^1_\pi(\Qq)).
$$
\end{lem}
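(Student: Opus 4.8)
The plan is to combine the kernel decomposition from Lemma~\ref{S3.1: Sing ker of frac grad-ID} with the mapping properties of the weighted spaces. First I would use \eqref{S3: Trace space embedding} together with Lemma~\ref{S3: reduction p/2} (whichever of \eqref{S3: ASP: vartheta & p} or \eqref{S3: ASP: vartheta & p-2} is in force) to conclude that $E_{\frac 1p}\hookrightarrow W^{2-2/p}_{p,\pi}(\Qq)$, and hence that $u'\in W^{1-2/p}_{p,\pi}(\Qq)$ with norm controlled by $\|u\|_{E_{1/p}}$. Since $N=1$ and $p$ is large (in particular $p>3$), the exponent $1-2/p$ exceeds $1/p$, so the Sobolev embedding on the circle gives $u'\in BC(\Qq)$; more precisely $u'\in BC^{\mu}_\pi(\Qq)$ for some Hölder exponent $\mu\in(0,1)$. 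This Hölder regularity of $u'$ is the input I will feed into both integral terms in the representation formula.

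Next I would estimate the two summands of $\partial^{1-\varepsilon}u$ separately. The smooth part $\int_{\Qq}h_\varepsilon(x-y)u'(y)\,dy$ is harmless: since $h_\varepsilon\in C^\infty$ (periodically extended, or handled via the convolution structure on $\Qq$), differentiation in $x$ falls on $h_\varepsilon$, and one gets a bound in $C^1_\pi(\Qq)$ by $\|u'\|_{L_1}\le C\|u'\|_{BC}\le C\|u\|_{E_{1/p}}$. For the singular part $c_\varepsilon\int_{\Qq}|x-y|^{\varepsilon-1}u'(y)\,dy$, I would write its $x$-derivative, formally $c_\varepsilon(\varepsilon-1)\int_{\Qq}|x-y|^{\varepsilon-2}\,\mathrm{sgn}(x-y)\,u'(y)\,dy$, and justify it by subtracting the value $u'(x)$: because $\int$ of the odd kernel $|x-y|^{\varepsilon-2}\mathrm{sgn}(x-y)$ over a symmetric interval has a controlled (or vanishing) principal value, the combination $\int|x-y|^{\varepsilon-2}\mathrm{sgn}(x-y)\bigl(u'(y)-u'(x)\bigr)\,dy$ converges absolutely thanks to $|u'(y)-u'(x)|\le C|x-y|^\mu$ and the integrability of $|x-y|^{\varepsilon-2+\mu}$ once $\mu>1-\varepsilon$; the constraints \eqref{S3: ASP: vartheta & p}--\eqref{S3: ASP: vartheta & p-2} on $p$ and $\varepsilon$ are exactly what make $\mu=1-2/p$ (or a value arbitrarily close) larger than $1-\varepsilon$. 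This yields $\partial^{1-\varepsilon}u\in BC^1_\pi(\Qq)$ with the derivative bounded by $C\|u'\|_{BC^\mu}\le C\|u\|_{E_{1/p}}$, and linearity and this bound together give membership in $\L(E_{\frac 1p},BC^1_\pi(\Qq))$.

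The main obstacle I expect is the rigorous treatment of the singular convolution near the diagonal: one must differentiate under the integral sign despite the kernel $|x-y|^{\varepsilon-1}$ not being $C^1$, and one must control the (non-periodic) boundary contributions coming from integrating only over $\Qq=[-1,1)$ rather than all of $\R$. I would handle the first point via the commutator/subtraction trick just described (equivalently, integrating by parts once to move a derivative onto $u'$ and then re-integrating), and the second by noting that near $x=\pm 1$ the distance to the endpoints is bounded below uniformly (or by absorbing the endpoint terms into the smooth remainder $h_\varepsilon$, since they involve $u'$ integrated against a kernel that is smooth away from the diagonal). A secondary, purely bookkeeping obstacle is verifying that the Hölder exponent one extracts from $W^{1-2/p}_{p,\pi}$ genuinely satisfies $\mu>1-\varepsilon$ under both parameter regimes; this is a direct computation using $\varepsilon\neq\tfrac12$ in the first case and $\varepsilon>1-\tfrac1{2p}$ in the second, and I would record it explicitly.
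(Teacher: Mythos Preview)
Your approach is genuinely different from the paper's and, modulo some bookkeeping slips, workable. The paper argues purely on the Fourier side: since $\partial^{1-\varepsilon}$ is by definition a Fourier multiplier of order $1-\varepsilon$, it maps $W^{2-2/p}_{p,\pi}\to W^{1+\varepsilon-2/p}_{p,\pi}$ (case \eqref{S3: ASP: vartheta & p}) or $B^{2-2/p}_{p/2,p,\pi}\to B^{1+\varepsilon-2/p}_{p/2,p,\pi}$ (case \eqref{S3: ASP: vartheta & p-2}), and then a single Sobolev/Besov embedding into $BC^1_\pi(\Qq)$ finishes the job; the numerical conditions $p>3/\varepsilon$ resp.\ $p>4/\varepsilon$ arise exactly as the threshold for that last embedding. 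Your route via the kernel representation and a commutator estimate on the singular convolution is more hands-on and avoids invoking multiplier theorems on fractional spaces, at the price of having to justify differentiation under the integral and to track the periodic boundary contributions, both of which the paper sidesteps entirely.

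Two corrections to your bookkeeping. First, in case \eqref{S3: ASP: vartheta & p-2} the embedding $E_{1/p}\hookrightarrow W^{2-2/p}_{p,\pi}(\Qq)$ does \emph{not} hold: Lemma~\ref{S3: reduction p/2} only yields $E_1\hookrightarrow W^2_{p/2,\pi}(\Qq)$, so after interpolation with $E_0\hookrightarrow L_{p/2,\pi}(\Qq)$ you land in $B^{2-2/p}_{p/2,p,\pi}(\Qq)$, with integrability index $p/2$ rather than $p$. Second, the H\"older exponent extracted from $u'\in W^{1-2/p}_{p,\pi}$ via one-dimensional Sobolev embedding is $\mu=1-3/p$, not $1-2/p$; in case \eqref{S3: ASP: vartheta & p-2} the $p/2$ integrability gives instead $\mu=1-4/p$. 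Fortunately the inequality $\mu>1-\varepsilon$ you need then becomes $p>3/\varepsilon$ resp.\ $p>4/\varepsilon$, which are precisely the standing hypotheses, so your argument survives once these indices are corrected.
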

\begin{proof}
First note that $\F^s_\pi(\Qq)\doteq \phi^* \F^s(\mathbb{T})$, so 
function space theory on compact manifolds applies; see \cite[Chapter
7]{Trib92}.\\
(i) If \eqref{S3: ASP: vartheta & p} is assumed to hold, from \eqref{S3:
  Trace space embedding}, we can infer that 
$$
 \partial^{1-\varepsilon} \in \L(E_{\frac{1}{p}},
 W^{1+\varepsilon-2/p}_{p,\pi}(\Qq)),
$$
since the mapping properties of
$\partial^{1-\varepsilon}$ readily follow for the Bessel potential
spaces (which can be defined in terms of decay properties of Fourier
coefficients) and, then for Sobolev-Slobodeckii spaces as well, by
interpolation.
Now the statement follows from Sobolev embedding and from
$\displaystyle p>\frac{3}{\varepsilon}$ in \eqref{S3: ASP: vartheta & p}.\\
(ii) If, instead, we assume \eqref{S3: ASP: vartheta & p-2}, then, 
Lemma~\ref{S3: reduction p/2} and interpolation theory imply that
$$
 E_{\frac{1}{p}}\hookrightarrow \Bigl(W^2_{p/2,\pi}(\Qq),
 L_{p/2,\pi}(\Qq)\Bigr)_{1-1/p,p}\doteq B^{2-2/p}_{p/2,p,\pi}(\Qq) .
$$
Thus we infer that
$$
 \partial^{1-\varepsilon} \in \L(E_{\frac{1}{p}},
 B^{1+\varepsilon-2/p}_{p/2,p,\pi}(\Qq)).
$$
Now embedding theorems for Besov spaces and
$\displaystyle p>\frac{4}{\varepsilon}$ in \eqref{S3: ASP: vartheta & p-2}
complete the proof.
\end{proof}
In dimension $1$, we are interested in initial data that are
close to piecewise constant functions in some proper topology. 

It suffices to take the following function $H$ as a generic
representative of piecewise constant functions 
\begin{align}\label{S3.1: def H}
H(x)=\chi_{(-1/2,1/2)}(x)=
\begin{cases}
1,\quad &x\in(-1/2,1/2),\\
0, &|x|\geq 1/2.
\end{cases}
\end{align}
This means that we choose $\Gamma=\{\pm 1/2\}$ and $H\in \R_\Gamma$.

\begin{prop}\label{S3.1: frac der of H}
The function $\partial^{1-\varepsilon} H$ satisfies
$$
 \partial^{1-\varepsilon} H(x)
 =c_\varepsilon[\frac{1}{|x+1/2|^{1-\varepsilon}}-\frac{1}{|x-1/2|^{1-\varepsilon}}]
 + h_\varepsilon(x+1/2) - h_\varepsilon(x-1/2),\: x\in\Qq
$$
where $h_\varepsilon\in C^\infty$, for some constant $c_\varepsilon>0$.
\end{prop}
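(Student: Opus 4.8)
The plan is to deduce the formula directly from Lemma~\ref{S3.1: Sing ker of frac grad-ID} by inserting the distributional derivative of $H$ into the kernel representation established there. First I would record that, regarded as a $2$-periodic function, $H=\chi_{(-1/2,1/2)}$ has bounded variation, with distributional (periodic) derivative the finite signed measure
$$
 H' = \delta_{-1/2}-\delta_{1/2},
$$
since for a smooth periodic test function $\varphi$ one has $-\int_{\Qq}H\varphi' = \varphi(-1/2)-\varphi(1/2)$. In particular $\widehat{H'}(k)=i\pi k\,\widehat H(k)$ is bounded in $k$, so $\partial^{1-\varepsilon}H=\mathcal F^{-1}\,\mathrm{diag}\{|k|^{-\varepsilon}\}\,\widehat{H'}$ is a well-defined periodic distribution; in fact its Fourier coefficients decay like $|k|^{-\varepsilon}$, so it is an $L^q$-function for $q<1/(1-\varepsilon)$, which is the sense in which the asserted identity is to be read.

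Next I would upgrade the identity of Lemma~\ref{S3.1: Sing ker of frac grad-ID} from smooth $u$ to $u=H$. Recall from the proof of that lemma that $\partial^{1-\varepsilon}u$ equals the (periodic) convolution of $u'$ with the $2$-periodic kernel $G_\varepsilon$ determined by $\widehat{G_\varepsilon}(k)=|k|^{-\varepsilon}$ for $k\neq0$, which near the origin decomposes as $G_\varepsilon(z)=c_\varepsilon|z|^{\varepsilon-1}+h_\varepsilon(z)$ with $c_\varepsilon>0$ and $h_\varepsilon$ smooth. Since $|z|^{\varepsilon-1}\in L^1_{\mathrm{loc}}$, the kernel $G_\varepsilon$ is integrable over one period, so its convolution with any finite measure is well defined and lies in $L^1$. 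The two maps $u\mapsto\partial^{1-\varepsilon}u$ and $u\mapsto G_\varepsilon*u'$ agree on $C^\infty_\pi(\Qq)$ (this is Lemma~\ref{S3.1: Sing ker of frac grad-ID}) and both are continuous from $BV$-functions (with weak-$*$ convergence of the derivative measures) into periodic distributions; hence they agree at $H$. Equivalently, and perhaps more cleanly, one checks this on Fourier coefficients: $\widehat{\partial^{1-\varepsilon}H}(k)=|k|^{-\varepsilon}\widehat{H'}(k)=\widehat{G_\varepsilon}(k)\,\widehat{H'}(k)$. Either way, $\partial^{1-\varepsilon}H = G_\varepsilon*(\delta_{-1/2}-\delta_{1/2})$.

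It then only remains to evaluate this convolution: convolving with a translated Dirac mass is translation, so $G_\varepsilon*(\delta_{-1/2}-\delta_{1/2})=G_\varepsilon(\cdot+1/2)-G_\varepsilon(\cdot-1/2)$, the normalization constant being absorbed into $c_\varepsilon$. Substituting the decomposition of $G_\varepsilon$ and grouping the singular and the smooth contributions separately yields
$$
 \partial^{1-\varepsilon}H(x)=c_\varepsilon\Big[\frac1{|x+1/2|^{1-\varepsilon}}-\frac1{|x-1/2|^{1-\varepsilon}}\Big]+h_\varepsilon(x+1/2)-h_\varepsilon(x-1/2)
$$
for $x\in\Qq\setminus\{\pm1/2\}$, hence a.e.\ and as an identity of $L^q$-functions.

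The only genuinely delicate point is the passage from smooth $u$ to the $BV$ datum $H$ in Lemma~\ref{S3.1: Sing ker of frac grad-ID}: one must verify that the (integrable) kernel singularity $|z|^{\varepsilon-1}$ behaves well under weak-$*$ convergence $u_n'\rightharpoonup H'$, and one should fix the smooth remainder $h_\varepsilon$ to be the $2$-periodic function that coincides with the Poisson-summation construction near $0$ and is smooth away from $0$, so that $h_\varepsilon(x\pm1/2)$ is meaningful for every $x\in\Qq$. Both are routine bookkeeping once the kernel of Lemma~\ref{S3.1: Sing ker of frac grad-ID} is exhibited; everything else is substitution.
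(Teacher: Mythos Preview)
Your argument is correct and follows essentially the same route as the paper: compute $H'=\delta_{-1/2}-\delta_{1/2}$ and plug it into the kernel representation from Lemma~\ref{S3.1: Sing ker of frac grad-ID}. The paper's proof is a one-liner that simply asserts this substitution; your version supplies the additional justification (via Fourier coefficients or weak-$*$ approximation) for extending the convolution identity from smooth $u$ to the $BV$ datum $H$, which the paper leaves implicit.
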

\begin{proof}
Using the kernel representation given in Lemma~\ref{S3.1: Sing ker of
  frac grad-ID} and the fact that $H^\prime=\delta_{-1/2}-
\delta_{1/2}$ yields that 
$$
 \partial^{1-\varepsilon} H(x)
 =c_\varepsilon[\frac{1}{|x+\frac{1}{2}|^{1-\varepsilon}} -
 \frac{1}{|x-\frac{1}{2}|^{1-\varepsilon}} ] + h_\varepsilon(x+1/2) -
 h_\varepsilon(x-1/2),\: x\in\Qq,
$$
and the claim follows.
\end{proof}

Taking $\d$ as in \eqref{S2.1: sing func},
there exists some $\cE>1$ such that
\begin{equation}\label{S3.1: est for H}
 1/\cE < \d^{1-\varepsilon} |\partial^{1-\varepsilon} H |<\cE,\quad
 \text{near }\Gamma. 
\end{equation}
We assume that the initial datum is of the form 
$$
 u_0= H+ w_0, \quad w_0\in E_{\frac{1}{p}}.
$$
\begin{remark}
A typical example of the perturbation term $w_0$ could be $\sin(64\pi x^2)$. 
\end{remark}
For any $w\in E_{\frac{1}{p}}$, we define 
$$
 \sA(w)u:= -\div( \alpha _\varepsilon(H+w) \nabla u).
$$ 
Recall that $\displaystyle \alpha
_\varepsilon(H+w):=\frac{1}{1+|\partial^{1-\varepsilon}(H+w)|^2}.$

We will apply the theory of $\mathscr{R}$-bounded operators to prove
that the operator $\sA(w)$ enjoys the property of maximal
$L_p$-regularity.
\begin{definition}
Let $X_1$ and $X_0$ be two Banach spaces. A family of operators
$\mathcal{T}\in \L(X_1,X_0)$ is called $\mathscr{R}$-bounded, if there
is a constant $C>0$ and $p\in [1,\infty)$ such that for each $N\in\N$,
$T_j\in \mathcal{T}$ and $x_j\in X_1$ and for all independent,
symmetric, $\{-1,1\}$-valued random  variables $\varepsilon_j$ on a
probability space $(\Omega,\mathcal{M},\mu)$ the inequality 
$$ 
  |\sum\limits_{j=1}^N \varepsilon_j T_j x_j|_{L_p(\Omega; X_0)} \leq
  C|\sum\limits_{j=1}^N \varepsilon_j  x_j|_{L_p(\Omega; X_1)} 
$$ 
is valid. The smallest such $C$ is called $\mathscr{R}$-bound of
$\mathcal{T}$. We denote it by $\mathscr{R}(\mathcal{T})$. 
\end{definition}

\begin{definition}
Suppose that $\cA\in \S(X)$. Then $\cA$ is called
$\mathscr{R}$-sectorial if there exists some $\phi>0$ such that 
$$
 \mathscr{R}_\cA(\phi):=\mathscr{R}\{\mu(\mu+\cA)^{-1}: \mu\in
 \Sigma_\phi \}<\infty.
$$
The $\mathscr{R}$-angle $\phi^R_\cA$ is defined by
$$
  \phi^R_\cA:=\inf\{ \theta\in (0,\pi): \mathscr{R}_\cA(\pi-\theta)
  <\infty \}.
$$
The class of $\mathscr{R}$-sectorial operators in $X$ is denoted by
$\RS(X)$. 
\end{definition}

Let $R>0$ and $B_R:=\{w\in E_{\frac{1}{p}}: \|w\|_{E_{\frac{1}{p}}}<R\}$.
\begin{lem}
\label{S3.1: unif bd of alpha}
There exists a constant $C$ such that
$$\d^{1-\varepsilon}|\partial^{1-\varepsilon}(H+w)|<C,\quad w\in B_R,$$
and 
$$1/C< \d^{2\varepsilon-2} \alpha _\varepsilon(H+w)<C ,\quad w\in B_R.$$
\end{lem}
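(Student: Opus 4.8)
The plan is to control $\partial^{1-\varepsilon}(H+w)$ by splitting it as $\partial^{1-\varepsilon}H + \partial^{1-\varepsilon}w$ and estimating each term against the weight $\d^{1-\varepsilon}$. For the singular part, Proposition~\ref{S3.1: frac der of H} together with estimate \eqref{S3.1: est for H} already gives that $\d^{1-\varepsilon}|\partial^{1-\varepsilon}H|$ is bounded near $\Gamma=\{\pm 1/2\}$; away from $\Gamma$ the weight $\d$ equals a positive constant and $\partial^{1-\varepsilon}H$ is smooth, hence bounded, so $\d^{1-\varepsilon}|\partial^{1-\varepsilon}H|$ is bounded on all of $\Qq$. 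For the perturbation part, Lemma~\ref{S3.1: fractional grad-1D} yields $\partial^{1-\varepsilon}\in\L(E_{\frac1p},BC^1_\pi(\Qq))$, so $\|\partial^{1-\varepsilon}w\|_\infty\leq C\|w\|_{E_{\frac1p}}<CR$ for $w\in B_R$; since $\d\leq 1$ everywhere, $\d^{1-\varepsilon}|\partial^{1-\varepsilon}w|\leq|\partial^{1-\varepsilon}w|<CR$. Adding the two bounds proves the first inequality with a constant depending only on $R$, $\varepsilon$, and the fixed datum $H$.

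For the second inequality, recall $\alpha_\varepsilon(H+w)=\bigl(1+|\partial^{1-\varepsilon}(H+w)|^2\bigr)^{-1}$. Multiplying through by $\d^{2\varepsilon-2}$ and writing $v:=\d^{1-\varepsilon}\partial^{1-\varepsilon}(H+w)$, one has
$$
 \d^{2\varepsilon-2}\alpha_\varepsilon(H+w)=\frac{1}{\d^{2-2\varepsilon}+|v|^2}.
$$
The numerator is $1$ and the denominator is $\d^{2-2\varepsilon}+|v|^2$. The upper bound from Step~1 gives $|v|<C$, and since $\d\leq 1$ we get $\d^{2-2\varepsilon}\leq 1$, so the denominator is at most $1+C^2$, yielding the lower bound $\d^{2\varepsilon-2}\alpha_\varepsilon(H+w)\geq (1+C^2)^{-1}$. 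For the upper bound we need the denominator bounded below away from zero, i.e.\ $\d^{2-2\varepsilon}+|v|^2\geq 1/C'$; this is where the lower estimate in \eqref{S3.1: est for H} is used. Near $\Gamma$, $|v|=\d^{1-\varepsilon}|\partial^{1-\varepsilon}(H+w)|\geq \d^{1-\varepsilon}|\partial^{1-\varepsilon}H|-\d^{1-\varepsilon}|\partial^{1-\varepsilon}w|\geq 1/\cE - CR\cdot\d^{1-\varepsilon}$, and since $\d\to 0$ near $\Gamma$ while $R$ can be shrunk if necessary (or one simply notes the subtracted term is $o(1)$ as $x\to\Gamma$), $|v|$ stays bounded below by a positive constant in a neighborhood of $\Gamma$. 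Away from $\Gamma$, $\d$ is a fixed positive constant so $\d^{2-2\varepsilon}$ is bounded below there. Combining the two regions gives the uniform lower bound on the denominator, hence the upper bound on $\d^{2\varepsilon-2}\alpha_\varepsilon(H+w)$.

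The main obstacle is the behavior near $\Gamma$ in the upper-bound part of the second estimate: one must ensure the singular term $\d^{1-\varepsilon}|\partial^{1-\varepsilon}H|$ is not cancelled by the perturbation. This is handled by exploiting that $\partial^{1-\varepsilon}w$ is merely \emph{bounded} (it belongs to $BC^1_\pi$, not to a weighted space blowing up like $\d^{-(1-\varepsilon)}$), so $\d^{1-\varepsilon}|\partial^{1-\varepsilon}w|\to 0$ as $x\to\Gamma$, while $\d^{1-\varepsilon}|\partial^{1-\varepsilon}H|\geq 1/\cE$ persists; thus for $x$ sufficiently close to $\Gamma$ the lower bound $1/(2\cE)$ holds for $|v|$ uniformly in $w\in B_R$, and one patches with the trivially positive lower bound of $\d^{2-2\varepsilon}$ on the complement. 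A slight care is needed to make the neighborhood of $\Gamma$ and the threshold independent of $w\in B_R$, which follows because the bound $\|\partial^{1-\varepsilon}w\|_\infty<CR$ is uniform over $B_R$.
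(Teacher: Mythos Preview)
Your proof is correct and follows essentially the same approach as the paper: split $\partial^{1-\varepsilon}(H+w)$ into the singular piece controlled by \eqref{S3.1: est for H} and the bounded piece coming from Lemma~\ref{S3.1: fractional grad-1D}, then rewrite $\d^{2\varepsilon-2}\alpha_\varepsilon(H+w)=\bigl(\d^{2-2\varepsilon}+|v|^2\bigr)^{-1}$ and bound the denominator from both sides by exploiting that $\d^{1-\varepsilon}|\partial^{1-\varepsilon}w|\to 0$ near $\Gamma$ while $\d^{1-\varepsilon}|\partial^{1-\varepsilon}H|$ stays bounded below. The momentary suggestion that ``$R$ can be shrunk if necessary'' should be dropped (since $R$ is given), but you immediately supply the correct argument---the subtracted term is $o(1)$ uniformly over $B_R$---which is exactly what the paper does.
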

\begin{proof}
(i) It follows from Lemma~\ref{S3.1: fractional grad-1D} that
\begin{equation*}
|\partial^{1-\varepsilon}w| \quad\text{is uniformly bounded for }w\in B_R,
\end{equation*} 
and the boundedness of $\d^{1-\varepsilon}|\partial^{1-\varepsilon}
H|$ follows from \eqref{S3.1: est for H}. This proves the first
assertion.\\
(ii) We have that
\begin{align*}
\d^{2\varepsilon-2} \alpha _\varepsilon(H+w)=
  \frac{1}{\d^{2-2\varepsilon}+
  \d^{2-2\varepsilon}|\partial^{1-\varepsilon}(H+w)|^2}. 
\end{align*}
The first assertion implies the uniform lower bound of the second. It
follows from the expression for $\partial^{1-\varepsilon}H$ in
Proposition~\ref{S3.1: frac der of H} that, in a small enough
$\delta$-neighborhood ${\sU}$ of $\Gamma$, one has that
$$
 \d^{1-\varepsilon}|\partial^{1-\varepsilon}H|>\frac{c_\varepsilon}{2},
$$
where $c_\varepsilon$ is the constant in Proposition~\ref{S3.1: frac
  der of H}. 
Clearly, by the uniform boundedness of $|\partial^{1-\varepsilon}w|$
in $B_R$, it holds that 
\begin{align}\label{S3: crucial embedding}
 \d^{1-\varepsilon}(x)  w(x)\to 0,\quad \text{and}\quad
  \d^{2-\varepsilon}(x) \frac{d}{dx} w(x)\to 0 \quad \text{as}\quad
  x\to \Gamma. 
\end{align}
Choosing $\delta$ sufficiently small yields
$\displaystyle \d^{1-\varepsilon}|\partial^{1-\varepsilon}w|<\frac{c_\varepsilon}{4}$
inside $\sU$. Therefore, 
$$
 \d^{2-2\varepsilon}|\partial^{1-\varepsilon}(H+w)|^2>\frac{c^2_\varepsilon}{16}
 \quad \text{in }\sU,\quad w\in U_R.
$$
Outside $\sU$, $\d^{2-2\varepsilon}$ is bounded from below
by a positive constant. This
proves the uniform upper bound in the second assertion. 
\end{proof}

\begin{lem}\label{S3.1: der alpha}
 For each $w\in E_{\frac{1}{p}}$,
 $\displaystyle\Big|\d^{2\varepsilon-1} \frac{d}{dx}\alpha
 _\varepsilon(H+w)\Big |\sim{\bf 1}$  in a $\delta$-neighbourhood $\sU$ of
 $\Gamma$.  
\end{lem}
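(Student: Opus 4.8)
The plan is to compute $\frac{d}{dx}\alpha_\varepsilon(H+w)$ explicitly and identify, near $\Gamma$, its leading singular behavior in powers of $\d$. Writing $v:=\partial^{1-\varepsilon}(H+w)$ and $\alpha_\varepsilon(H+w)=(1+v^2)^{-1}$, the chain rule gives
$$
\frac{d}{dx}\alpha_\varepsilon(H+w) = -\frac{2v\,v'}{(1+v^2)^2},
$$
where $v'=\frac{d}{dx}\partial^{1-\varepsilon}(H+w)$. So I need the asymptotics of $v$ and $v'$ as $x\to\Gamma$. From Proposition~\ref{S3.1: frac der of H}, near each of the two points of $\Gamma=\{\pm1/2\}$ the dominant term of $\partial^{1-\varepsilon}H$ is $\pm c_\varepsilon|x\mp 1/2|^{-(1-\varepsilon)}$ up to a smooth remainder, so $\d^{1-\varepsilon}|\partial^{1-\varepsilon}H|\sim{\bf 1}$ (already recorded in \eqref{S3.1: est for H}), while differentiating once produces the worse singularity $\d^{-(2-\varepsilon)}$, i.e. $\d^{2-\varepsilon}|\frac{d}{dx}\partial^{1-\varepsilon}H|\sim{\bf 1}$ near $\Gamma$. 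For the perturbation part, Lemma~\ref{S3.1: fractional grad-1D} gives $\partial^{1-\varepsilon}w\in BC^1_\pi(\Qq)$, hence both $\partial^{1-\varepsilon}w$ and its derivative are bounded near $\Gamma$; consequently $\d^{1-\varepsilon}\partial^{1-\varepsilon}w\to 0$ and $\d^{2-\varepsilon}\frac{d}{dx}\partial^{1-\varepsilon}w\to 0$ as $x\to\Gamma$, so in the weighted scale $v$ behaves like $\partial^{1-\varepsilon}H$ and $v'$ behaves like $\frac{d}{dx}\partial^{1-\varepsilon}H$.

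Next I assemble the orders. Near $\Gamma$ we have $|v|\sim \d^{-(1-\varepsilon)}$ and $|v'|\sim \d^{-(2-\varepsilon)}$, hence $1+v^2\sim \d^{-2(1-\varepsilon)}$ and $(1+v^2)^2\sim \d^{-4(1-\varepsilon)}$, so
$$
\Big|\frac{d}{dx}\alpha_\varepsilon(H+w)\Big|
= \frac{2|v|\,|v'|}{(1+v^2)^2}
\sim \frac{\d^{-(1-\varepsilon)}\cdot\d^{-(2-\varepsilon)}}{\d^{-4(1-\varepsilon)}}
= \d^{\,4(1-\varepsilon)-(1-\varepsilon)-(2-\varepsilon)}
= \d^{\,1-2\varepsilon}.
$$
Multiplying by $\d^{2\varepsilon-1}$ gives $\big|\d^{2\varepsilon-1}\frac{d}{dx}\alpha_\varepsilon(H+w)\big|\sim{\bf 1}$ near $\Gamma$, which is the claim. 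To make this rigorous I would fix a small $\delta$-neighbourhood $\sU$ of $\Gamma$ on which, by Lemma~\ref{S3.1: unif bd of alpha} and the argument there, $\d^{1-\varepsilon}|\partial^{1-\varepsilon}(H+w)|$ is bounded above and below by positive constants (using $c_\varepsilon/4 < \d^{1-\varepsilon}|v| $ after shrinking $\delta$), which pins down $1+v^2$ and $(1+v^2)^2$ two-sidedly in terms of $\d^{-2(1-\varepsilon)}$; and separately I would show $\d^{2-\varepsilon}|v'|$ is bounded above and below near $\Gamma$ by the same splitting $v' = \frac{d}{dx}\partial^{1-\varepsilon}H + \frac{d}{dx}\partial^{1-\varepsilon}w$, the first term contributing the two-sided bound via the explicit formula in Proposition~\ref{S3.1: frac der of H} and the second being $o(\d^{-(2-\varepsilon)})$.

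The main obstacle is the two-sided (lower) bound on $\d^{2-\varepsilon}|v'|$: the upper bound is routine, but for the lower bound one must differentiate the representation in Proposition~\ref{S3.1: frac der of H}, check that $\frac{d}{dx}\big[c_\varepsilon(|x+1/2|^{-(1-\varepsilon)}-|x-1/2|^{-(1-\varepsilon)})\big]$ genuinely has modulus $\sim\d^{-(2-\varepsilon)}$ near each branch of $\Gamma$ (the cross term from the other singularity and the smooth $h_\varepsilon$-terms are lower order there), and confirm that the smoothing estimate $\d^{2-\varepsilon}\frac{d}{dx}\partial^{1-\varepsilon}w\to 0$ indeed follows from $\partial^{1-\varepsilon}w\in BC^1_\pi(\Qq)$ together with $\d\le 1$ and $2-\varepsilon>0$. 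Once these weighted asymptotics are in hand, the conclusion is immediate from the displayed quotient. Everything outside $\sU$ is irrelevant since the statement is localized near $\Gamma$.
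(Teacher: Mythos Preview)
Your proposal is correct and follows essentially the same route as the paper: compute the derivative via the chain rule as $-2\alpha_\varepsilon^2(u)\,\partial^{1-\varepsilon}u\,\tfrac{d}{dx}\partial^{1-\varepsilon}u$, use Proposition~\ref{S3.1: frac der of H} to get the two-sided weighted bounds $\d^{1-\varepsilon}|\partial^{1-\varepsilon}H|\sim{\bf 1}$ and $\d^{2-\varepsilon}|\tfrac{d}{dx}\partial^{1-\varepsilon}H|\sim{\bf 1}$, absorb the $w$-contribution as lower order via Lemma~\ref{S3.1: fractional grad-1D} (exactly your $\d^{1-\varepsilon}\partial^{1-\varepsilon}w\to 0$, $\d^{2-\varepsilon}\tfrac{d}{dx}\partial^{1-\varepsilon}w\to 0$, which is the paper's \eqref{S3: crucial embedding}), and invoke Lemma~\ref{S3.1: unif bd of alpha} for the $\alpha_\varepsilon^2$ factor. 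Your explicit power-counting display is a bit more verbose than the paper's, but the argument and the ingredients are identical.
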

\begin{proof}
Let $u=H+w$ and observe that
\begin{align*}
\frac{d}{dx} \alpha _\varepsilon(u)= -2\alpha
  _\varepsilon^2(u) \partial ^{1-\varepsilon}u
  \frac{d}{dx}\partial^{1-\varepsilon}u.
\end{align*}
An easy computation and Proposition~\ref{S3.1: frac der of H} show
that 
\begin{equation*}
 \d^{2-\varepsilon}|\frac{d}{dx}\partial^{1-\varepsilon}H |\sim {\bf
   1}. 
\end{equation*}
By \eqref{S3: crucial embedding}, in a sufficiently small  $\delta$-neighbourhood $\sU$ of
$\Gamma$, we have that
$$
 \d^{1-\varepsilon}|\partial^{1-\varepsilon}u|\sim {\bf
   1},\quad\d^{2-\varepsilon}|\frac{d}{dx}\partial^{1-\varepsilon}u|
  \sim {\bf 1}.
$$
In combination with Lemma~\ref{S3.1: unif bd of
  alpha}, this yields 
\begin{equation}\label{S3.1: est for u_0-3}
 |\d^{2\varepsilon-1}\frac{d}{dx}\alpha _\varepsilon(u)|\sim {\bf 1},
 \quad \text{in }\sU. 
\end{equation}
\end{proof}

\begin{lem}\label{S3.1: 2nd der of alpha}
There exists a constant $C$ such that 
$$
1/C< \sg(1-2\varepsilon)\d^{2\varepsilon}\frac{d^2}{dx^2}\alpha
 _\varepsilon(H)<C
$$
in a $\delta$-neighbourhood $\sU$ of $\Gamma$.
\end{lem}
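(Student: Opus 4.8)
The plan is to differentiate $\alpha_\varepsilon(H)=(1+v^2)^{-1}$, with $v:=\partial^{1-\varepsilon}H\in C^\infty(\Qq\setminus\Gamma)$, twice and then insert the sharp near-$\Gamma$ behaviour of $v,v',v''$ furnished by Proposition~\ref{S3.1: frac der of H}. A direct computation (differentiating once more the formula for $\tfrac{d}{dx}\alpha_\varepsilon$ recalled in the proof of Lemma~\ref{S3.1: der alpha}) gives, on $\sU\setminus\Gamma$,
$$
 \frac{d^2}{dx^2}\alpha_\varepsilon(H)=8\,\alpha_\varepsilon^3(H)\,v^2(v')^2-2\,\alpha_\varepsilon^2(H)\,(v')^2-2\,\alpha_\varepsilon^2(H)\,v\,v'' .
$$
From Proposition~\ref{S3.1: frac der of H}, near each $x_0\in\Gamma=\{\pm\tfrac12\}$ one may write $v=\pm c_\varepsilon\,\d^{\varepsilon-1}+v_r$ with $c_\varepsilon>0$ and $v_r$ smooth in a punctured neighbourhood of $x_0$ (the non-singular fractional term $c_\varepsilon|x\mp\tfrac12|^{\varepsilon-1}$ and the $C^\infty$ pieces $h_\varepsilon(\cdot\pm\tfrac12)$ all enter $v_r$), where $\d=|x-x_0|=\mathrm{dist}(\cdot,\Gamma)$ near $x_0$ once $\delta$ is small. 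Since $\varepsilon\in(0,1)$, the singular parts dominate the bounded smooth remainders, so that, uniformly on $\sU\setminus\Gamma$,
$$
 \d^{2\varepsilon-2}v^2\to c_\varepsilon^2,\qquad \d^{4-2\varepsilon}(v')^2\to c_\varepsilon^2(\varepsilon-1)^2,\qquad \d^{4-2\varepsilon}v\,v''\to c_\varepsilon^2(\varepsilon-1)(\varepsilon-2),
$$
and $\d^{2\varepsilon-2}\alpha_\varepsilon(H)\to c_\varepsilon^{-2}$ (as already used in Lemma~\ref{S3.1: unif bd of alpha}).

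Inserting these asymptotics, each of the three summands above is comparable to $\d^{-2\varepsilon}$ and their leading coefficients combine to
$$
 \frac{d^2}{dx^2}\alpha_\varepsilon(H)=\frac{6(\varepsilon-1)^2-2(\varepsilon-1)(\varepsilon-2)}{c_\varepsilon^2}\,\d^{-2\varepsilon}\bigl(1+o(1)\bigr)=\frac{2(\varepsilon-1)(2\varepsilon-1)}{c_\varepsilon^2}\,\d^{-2\varepsilon}\bigl(1+o(1)\bigr)
$$
as $x\to\Gamma$. Since $c_\varepsilon>0$, $\varepsilon-1<0$, and $\varepsilon\neq\tfrac12$ (imposed in \eqref{S3: ASP: vartheta & p}, and automatic in \eqref{S3: ASP: vartheta & p-2} because there $\varepsilon>1-\tfrac1{2p}>\tfrac12$), the constant $2(\varepsilon-1)(2\varepsilon-1)/c_\varepsilon^2$ is nonzero with sign equal to $\sg\bigl((\varepsilon-1)(2\varepsilon-1)\bigr)=\sg(1-2\varepsilon)$; indeed $\sg(1-2\varepsilon)\,\dfrac{2(\varepsilon-1)(2\varepsilon-1)}{c_\varepsilon^2}=\dfrac{2(1-\varepsilon)\,|1-2\varepsilon|}{c_\varepsilon^2}>0$. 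Hence $\sg(1-2\varepsilon)\,\d^{2\varepsilon}\tfrac{d^2}{dx^2}\alpha_\varepsilon(H)$ tends to this positive number as $x\to\Gamma$, and after shrinking $\delta$ a last time one obtains the stated two-sided bound for an appropriate constant $C$.

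The only genuine work is in the middle step: one must verify that every contribution coming from the smooth remainder $v_r$ and from the expansion $\alpha_\varepsilon(H)=v^{-2}+O(v^{-4})$ is genuinely $o(\d^{-2\varepsilon})$, and uniformly so on $\sU\setminus\Gamma$. This is where $\varepsilon\in(0,1)$ and the explicit exponents of Proposition~\ref{S3.1: frac der of H} are used, and where the hypothesis $\varepsilon\neq\tfrac12$ is indispensable: at $\varepsilon=\tfrac12$ the leading coefficient vanishes and no such two-sided estimate can hold.
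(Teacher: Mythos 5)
Your proof is correct and follows essentially the same route as the paper: expand $\frac{d^2}{dx^2}\alpha_\varepsilon(H)$ explicitly in terms of $v=\partial^{1-\varepsilon}H$, insert the asymptotics $v\sim\pm c_\varepsilon\d^{\varepsilon-1}$ from Proposition~\ref{S3.1: frac der of H}, and identify the leading coefficient $2(\varepsilon-1)(2\varepsilon-1)/c_\varepsilon^2$ of $\d^{-2\varepsilon}$, whose sign is $\sg(1-2\varepsilon)$ and which is nonzero precisely because $\varepsilon\neq\tfrac12$. The only cosmetic difference is bookkeeping: the paper groups the terms as $\alpha_\varepsilon^3v^2[6(v')^2-2vv'']-2\alpha_\varepsilon^3[vv''+(v')^2]$ and discards the second group as $O(\d^{2-2\varepsilon})$, while you take the limit of each of the three summands after multiplying by $\d^{2\varepsilon}$ and sum the coefficients, which yields the same constant.
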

\begin{proof}
Direct computations show
\begin{multline*}
\frac{d^2}{dx^2}\alpha _\varepsilon(H)=
   \alpha
  _\varepsilon(H)^3
  |\partial^{1-\varepsilon} H|^2
  \big [6|\frac{d}{dx} \partial^{1-\varepsilon} H|^2    
  -2\partial^{1-\varepsilon}H \frac{d^2}{dx^2}\partial^{1-\varepsilon}H 
  \big] \\  
  -2\alpha
  _\varepsilon(H)^3\big [
  \partial^{1-\varepsilon}H \frac{d^2}{dx^2}\partial^{1-\varepsilon}H 
  +|\frac{d}{dx} \partial^{1-\varepsilon} H|^2  \big] 
  .\qquad
\end{multline*}
By Proposition~\ref{S3.1: frac der of H},  one can verify that
$$
 \d^{3-\varepsilon}\frac{d^2}{dx^2}\partial^{1-\varepsilon}H \sim {\bf
   1},
$$
and 
$$
6|\frac{d}{dx} \partial^{1-\varepsilon} H|^2    
  -2\partial^{1-\varepsilon}H \frac{d^2}{dx^2}\partial^{1-\varepsilon}H
  \sim 
  2(1-\varepsilon)(1-2\varepsilon)\d^{2\varepsilon-4},\quad \text{near }\Gamma.
$$
Note that, by the previous estimates and  Lemma~\ref{S3.1: der alpha},
it holds that
$$
\d^{2\varepsilon}\alpha
  _\varepsilon(H)^3\big |
  \partial^{1-\varepsilon}H \frac{d^2}{dx^2}\partial^{1-\varepsilon}H 
  +|\frac{d}{dx} \partial^{1-\varepsilon} H|^2  \big| \sim \d^{2-2\varepsilon}.
$$
Thus this term can be made arbitrarily small by shrinking $\sU$.
To sum up, in a sufficiently small $\delta$-tubular neighborhood $\sU$,
we have
\begin{align*}
 \sg(1-2\varepsilon)\d^{2\varepsilon}\frac{d^2}{dx^2}\alpha
 _\varepsilon(H)\sim |1-2\varepsilon|.
\end{align*}
This completes the proof.
\end{proof}

\begin{lem}\label{S3.1: alpha BC1}
$\alpha _\varepsilon(H)\in
BC^{2,2\varepsilon-2}_\pi(\QqG)$, and for each $w\in E_{\frac{1}{p}}$, we have
$$
\alpha _\varepsilon(H+w)\in
BC^{1,2\varepsilon-2}_\pi(\QqG).
$$ 
Moreover, for any $R>0$, 
$$[w\mapsto \alpha _\varepsilon(H+w)] \in C^\omega(B_R, BC^{1,
  2\varepsilon-2}_\pi(\QqG)),$$ 
  where $\omega$ is the symbol for real analyticity.
\end{lem}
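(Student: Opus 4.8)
The plan is to establish all three assertions by combining the kernel representation of $\partial^{1-\varepsilon}$ (Proposition~\ref{S3.1: frac der of H} and Lemma~\ref{S3.1: Sing ker of frac grad-ID}) with the mapping property $\partial^{1-\varepsilon}\in\L(E_{1/p},BC^1_\pi(\Qq))$ from Lemma~\ref{S3.1: fractional grad-1D}, the pointwise bounds of Lemma~\ref{S3.1: unif bd of alpha} and Lemma~\ref{S3.1: der alpha}, and the pointwise multiplication properties of weighted spaces (Lemma~\ref{S2: pointwise mul}). First I would treat $\alpha_\varepsilon(H)$: away from $\Gamma$ it is a genuine smooth function, so the weighted norm there is harmless; near $\Gamma$ one uses the decomposition $\partial^{1-\varepsilon}H=c_\varepsilon[\,|x+1/2|^{\varepsilon-1}-|x-1/2|^{\varepsilon-1}\,]+h_\varepsilon(x+1/2)-h_\varepsilon(x-1/2)$ to compute derivatives explicitly and read off that $\d^{2\varepsilon-2}\alpha_\varepsilon(H)$, $\d^{2\varepsilon-1}\frac{d}{dx}\alpha_\varepsilon(H)$ and $\d^{2\varepsilon}\frac{d^2}{dx^2}\alpha_\varepsilon(H)$ are bounded (the latter two by the computations already carried out in Lemmas~\ref{S3.1: der alpha} and \ref{S3.1: 2nd der of alpha}), which is exactly the statement $\alpha_\varepsilon(H)\in BC^{2,2\varepsilon-2}_\pi(\QqG)$.

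Next I would handle $\alpha_\varepsilon(H+w)$ for fixed $w\in E_{1/p}$. Writing $u=H+w$ and $\alpha_\varepsilon(u)=(1+|\partial^{1-\varepsilon}u|^2)^{-1}$, differentiation gives $\frac{d}{dx}\alpha_\varepsilon(u)=-2\alpha_\varepsilon^2(u)\,\partial^{1-\varepsilon}u\,\frac{d}{dx}\partial^{1-\varepsilon}u$. By Lemma~\ref{S3.1: fractional grad-1D}, $\partial^{1-\varepsilon}w\in BC^1_\pi(\Qq)$, so the only singular contributions are those of $\partial^{1-\varepsilon}H$ and its derivative, controlled by $\d^{1-\varepsilon}$ and $\d^{2-\varepsilon}$ respectively. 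Combining with the uniform bounds $1/C<\d^{2\varepsilon-2}\alpha_\varepsilon(u)<C$ from Lemma~\ref{S3.1: unif bd of alpha} and tracking powers of $\d$, one sees that $\d^{2\varepsilon-2}\alpha_\varepsilon(u)$ and $\d^{2\varepsilon-1}\frac{d}{dx}\alpha_\varepsilon(u)$ are bounded, i.e. $\alpha_\varepsilon(H+w)\in BC^{1,2\varepsilon-2}_\pi(\QqG)$; here it is essential that only \emph{one} derivative is taken, since a second derivative would hit $\partial^{1-\varepsilon}w$, which need not lie in $BC^2$.

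For the analyticity statement I would argue by composition. The map $w\mapsto\partial^{1-\varepsilon}w$ is bounded linear from $E_{1/p}$ into $BC^1_\pi(\Qq)$, hence real analytic, and $w\mapsto\partial^{1-\varepsilon}(H+w)=\partial^{1-\varepsilon}H+\partial^{1-\varepsilon}w$ is affine into $BC^{1,\varepsilon-1}_\pi(\QqG)$ (using $\d^{1-\varepsilon}\partial^{1-\varepsilon}H\in BC^1$ from the explicit formula, together with Lemma~\ref{S2: change of wgt} to absorb the weight). The nonlinearity $z\mapsto(1+z^2)^{-1}$ is real analytic on a neighborhood of the range, and by Lemma~\ref{S2: pointwise mul} the Banach algebra $BC^{1,0}_\pi(\QqG)=BC^1_\pi$ acts on the scale, so the substitution operator $z\mapsto(1+z^2)^{-1}$ composed with an appropriate rescaling by powers of $\d$ maps analytically into $BC^{1,2\varepsilon-2}_\pi(\QqG)$; the uniform bounds on $B_R$ from Lemma~\ref{S3.1: unif bd of alpha} guarantee that one stays in the domain of analyticity of the scalar function, uniformly over $B_R$. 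Composing the affine map with this analytic Nemytskii-type map yields $[w\mapsto\alpha_\varepsilon(H+w)]\in C^\omega(B_R,BC^{1,2\varepsilon-2}_\pi(\QqG))$.

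The main obstacle is the bookkeeping of weights in the composition step: one must verify that multiplying out $\alpha_\varepsilon^2(u)\,\partial^{1-\varepsilon}u\,\frac{d}{dx}\partial^{1-\varepsilon}u$ and its $w$-derivatives produces exactly the weight $\d^{2\varepsilon-2}$ (for the $BC^{0}$-part) and $\d^{2\varepsilon-1}$ (for the $BC^{1}$-part) with no worse singularity, and that the convergence of the power series in $w$ takes place in the weighted norm rather than merely pointwise — this is where Lemmas~\ref{S2: pointwise mul} and~\ref{S2: change of wgt} do the real work, and where the smoothness of $\partial^{1-\varepsilon}w$ (one full derivative, no weight) is exactly what is needed.
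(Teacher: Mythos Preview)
Your treatment of the first two assertions is essentially the paper's: smoothness away from $\Gamma$ is free, and the weighted $BC^2$/$BC^1$ bounds near $\Gamma$ are exactly the content of Lemmas~\ref{S3.1: unif bd of alpha}--\ref{S3.1: 2nd der of alpha}. (One sign slip: the target of the affine map $w\mapsto\partial^{1-\varepsilon}(H+w)$ is $BC^{1,1-\varepsilon}_\pi(\QqG)$, not $BC^{1,\varepsilon-1}_\pi$; the weight $\d^{1-\varepsilon}$ compensates the blow-up, not $\d^{\varepsilon-1}$.)

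The analyticity step has a genuine gap. Your identification $BC^{1,0}_\pi(\QqG)=BC^1_\pi$ is false: the norm of $BC^{1,0}_\pi(\QqG)$ controls $\|\d\,\nabla u\|_\infty$, not $\|\nabla u\|_\infty$. More importantly, asserting that ``the substitution operator $z\mapsto(1+z^2)^{-1}$ \dots\ maps analytically'' is exactly the non-trivial point; Lemma~\ref{S2: pointwise mul} gives you a Banach algebra, but analyticity of Nemytskii operators on weighted spaces over incomplete manifolds does not follow from that alone. The paper avoids this by factoring the map as
\[
w\;\longmapsto\;\frac{1}{\alpha_\varepsilon(H+w)}=1+|\partial^{1-\varepsilon}(H+w)|^2\;\longmapsto\;\alpha_\varepsilon(H+w),
\]
where the first arrow is a \emph{polynomial} in $\partial^{1-\varepsilon}(H+w)\in BC^{1,1-\varepsilon}_\pi$, hence analytic into $BC^{1,2-2\varepsilon}_\pi$ by Lemma~\ref{S2: pointwise mul} directly. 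For the second arrow (inversion) the paper passes via Lemma~\ref{S2: change of wgt} to $BC^{1,0}_\pi(\QqG)$, observes that this space is isomorphic to $BC^1$ on the conformally rescaled manifold $(\hat\M,\hat g)=(\M,g/\rho^2)$, which has \emph{bounded geometry}, and then invokes \cite[Proposition~6.3]{ShaoSim13} for analyticity of the scalar inversion there. You would need either this passage to bounded geometry or an independent Nemytskii-type lemma on $BC^{1,0}_\pi(\QqG)$ to close your argument.
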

\begin{proof}
By Lemma~\ref{S3.1: fractional grad-1D} and Proposition~\ref{S3.1: frac der of H}, we readily infer that
$$
\alpha _\varepsilon(H)\in C^2_\pi(\QqG) \quad \text{and}\quad \alpha _\varepsilon(H+w)\in C^1_\pi(\QqG).
$$ 
The rest of the proof for the first assertion follows from Lemmas~\ref{S3.1: unif bd of alpha}-\ref{S3.1: 2nd der of alpha}.

By the estimates in Lemmas~\ref{S3.1: unif bd of alpha} and \ref{S3.1: der alpha}, we already knew that
$$
\partial^{1-\varepsilon}(H+w)\in BC^{1,1-\varepsilon}(\QqG).
$$
Lemma~\ref{S2: pointwise mul} implies that
$$
 [w\mapsto \frac{1}{\alpha _\varepsilon(H+w)}]\in
 C^\omega(E_{\frac{1}{p}}, BC^{1,2-2\varepsilon}_\pi(\QqG)).
$$
The manifold $(\hat{\M}, \hat{g}):=(\M, g/\rho^2 )$ has bounded
geometry, and thus $BC^k$-function spaces are well defined. We denote
these spaces by $BC^k(\hat{\M})$. Note that the space 
$$
 BC^{1,0}_\pi(\QqG)\doteq \phi^* BC^1(\hat{\M}).
$$
See \cite[Section~4]{Ama13b}. Applying
\cite[Proposition~6.3]{ShaoSim13} to $BC^k$-functions and
in view of Lemma~\ref{S2: change of wgt}, we can infer that 
$$
 [w\mapsto \alpha _\varepsilon(H+w)] \in C^\omega(B_R, BC^{1,
  2\varepsilon-2}_\pi(\QqG)) 
$$
for any $R>0$.
\end{proof}

\begin{lem}\label{S3.1: approx H to H+w}
Assume that $w\in E_{\frac{1}{p}}$. Then for each $k$, there exists a sufficiently small 
$\delta_k$-neighbourhood $\sUk$ of $\Gamma$ such that
$$\|\alpha _\varepsilon(H)-\alpha _\varepsilon(H+w)\|_{BC^{1,2\varepsilon-2}(\bsUGk})\leq 1/k .$$ 
\end{lem}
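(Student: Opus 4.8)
The plan is to rewrite the difference $\alpha_\varepsilon(H)-\alpha_\varepsilon(H+w)$ in a form that makes it visible that, over a tubular neighbourhood of $\Gamma$, its weighted $BC^1$-norm is controlled by a strictly positive power of $\d$; that power then forces the norm to $0$ as the neighbourhood shrinks.

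Put $a:=\partial^{1-\varepsilon}H$ and $\xi:=\partial^{1-\varepsilon}w$. Since $\alpha_\varepsilon(v)^{-1}=1+|\partial^{1-\varepsilon}v|^2$ and $\partial^{1-\varepsilon}$ is scalar‑valued in dimension one, a direct computation yields the identity
\[
 \alpha_\varepsilon(H)-\alpha_\varepsilon(H+w)=\alpha_\varepsilon(H)\,\alpha_\varepsilon(H+w)\,\xi\,(2a+\xi)\qquad\text{on }\QqG .
\]
First I would fix $\delta>0$ small enough, with $R:=\|w\|_{E_{\frac{1}{p}}}+1$, so that on the $2\delta$-tubular neighbourhood $\sU$ the pointwise behaviour near $\Gamma$ already recorded in \eqref{S3.1: est for H}, Proposition~\ref{S3.1: frac der of H} and Lemmas~\ref{S3.1: unif bd of alpha}--\ref{S3.1: 2nd der of alpha} holds in the uniform form
\[
 |\alpha_\varepsilon(H)|+|\alpha_\varepsilon(H+w)|\le C\d^{2-2\varepsilon},\qquad \bigl|{\textstyle\frac{d}{dx}}\alpha_\varepsilon(H)\bigr|+\bigl|{\textstyle\frac{d}{dx}}\alpha_\varepsilon(H+w)\bigr|\le C\d^{1-2\varepsilon},
\]
\[
 |a|,\ |a+\xi|\le C\d^{\varepsilon-1},\qquad \bigl|{\textstyle\frac{d}{dx}}a\bigr|,\ \bigl|{\textstyle\frac{d}{dx}}(a+\xi)\bigr|\le C\d^{\varepsilon-2},\qquad \|\xi\|_{BC^1_\pi(\Qq)}\le C,
\]
the last bound coming from Lemma~\ref{S3.1: fractional grad-1D}; consequently $|2a+\xi|\le C\d^{\varepsilon-1}$ and $|{\textstyle\frac{d}{dx}}(2a+\xi)|\le C\d^{\varepsilon-2}$ near $\Gamma$, using that $\d^{\varepsilon-1},\d^{\varepsilon-2}\ge1$ there. (It is precisely the fact that $a$ and its derivative carry negative powers of $\d$ while $\alpha_\varepsilon(H)$, $\alpha_\varepsilon(H+w)$ carry compensating positive powers that dictates the weight $2\varepsilon-2$.)

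Inserting these bounds into the identity and its Leibniz expansion is then exponent bookkeeping. The zeroth-order term satisfies, near $\Gamma$,
\[
 \d^{2\varepsilon-2}\bigl|\alpha_\varepsilon(H)-\alpha_\varepsilon(H+w)\bigr|\le C\,\d^{(2\varepsilon-2)+(2-2\varepsilon)+(2-2\varepsilon)+(\varepsilon-1)}=C\,\d^{1-\varepsilon},
\]
and each of the four terms arising from differentiating the product $\alpha_\varepsilon(H)\,\alpha_\varepsilon(H+w)\,\xi\,(2a+\xi)$, multiplied by the weight $\d^{2\varepsilon-1}$, is likewise $\le C\d^{1-\varepsilon}$ (the term differentiating $\xi$ even carries the higher power $\d^{2-2\varepsilon}$). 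Since $\varepsilon\in(0,1)$, the exponent $1-\varepsilon$ is positive, so on the $\delta_k$-tubular neighbourhood $\sUk$ — where $\d={\sf dist}(\cdot,\Gamma)<\delta_k$ once $\delta_k<\delta$ — one obtains
\[
 \|\alpha_\varepsilon(H)-\alpha_\varepsilon(H+w)\|_{BC^{1,2\varepsilon-2}(\bsUGk)}\le C\,\delta_k^{\,1-\varepsilon},
\]
with $C$ independent of $\delta_k$. It then suffices to choose $\delta_k$ so small that $C\delta_k^{1-\varepsilon}\le 1/k$.

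The only point that needs care is that the constants in Lemmas~\ref{S3.1: unif bd of alpha}--\ref{S3.1: 2nd der of alpha} be frozen on the fixed neighbourhood $\sUG$ before the $\delta_k$ are chosen — which is legitimate since those lemmas are stated on a single small $\delta$-neighbourhood — after which the decay as $\delta_k\to0$ is automatic from the factor $\d^{1-\varepsilon}$. There is no genuine analytic obstacle beyond tracking the weight exponents through the product rule.
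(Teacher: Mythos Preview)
Your proof is correct and follows essentially the same approach as the paper: you use the identical algebraic identity $\alpha_\varepsilon(H)-\alpha_\varepsilon(H+w)=\alpha_\varepsilon(H)\,\alpha_\varepsilon(H+w)\,[\partial^{1-\varepsilon}(2H+w)]\,\partial^{1-\varepsilon}w$ and then track the weight exponents via Lemmas~\ref{S3.1: unif bd of alpha}--\ref{S3.1: der alpha} and Lemma~\ref{S3.1: fractional grad-1D} to produce a factor $\d^{1-\varepsilon}$ that forces the $BC^{1,2\varepsilon-2}$-norm to zero as the tubular neighbourhood shrinks. The paper's proof is terser---it writes out only the zeroth-order bound explicitly and declares the derivative estimate ``follows similarly''---whereas you spell out the Leibniz bookkeeping; but the argument is the same (your citation of Lemma~\ref{S3.1: 2nd der of alpha} is superfluous since only the $BC^1$-norm is needed).
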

\begin{proof}
We have
$$
\alpha _\varepsilon(H)-\alpha _\varepsilon(H+w)
= \alpha _\varepsilon(H)\alpha _\varepsilon(H+w)
\big[ \partial^{1-\varepsilon}(2H+w)\big]
\partial^{1-\varepsilon}w.
$$
Thus, by Lemma~\ref{S3.1: unif bd of alpha}, for some $C>0$
\begin{align*}
\d^{2\varepsilon-2}|\alpha _\varepsilon(H)-\alpha _\varepsilon(H+w)|\leq C \d^{1-\varepsilon} 
|\partial^{1-\varepsilon}w|.
\end{align*}
This term can be made arbitrarily small by shrinking the
neighbourhood $\sU$. The estimate for $\displaystyle \frac{d}{dx}\big[
\alpha _\varepsilon(H)-\alpha _\varepsilon(H+w)\big] $ follows
similarly by utilizing Lemmas~\ref{S3.1: fractional grad-1D},
\ref{S3.1: unif bd of alpha} and \ref{S3.1: der alpha}. 
\end{proof}

We can now establish the following maximal regularity property for the
operator $\sA(w)$ for every $w\in E_{\frac{1}{p}}$.
\begin{prop} \label{S3.1: MR-thm}
Let $1<p<\infty$ and $\varepsilon$ satisfy \eqref{S3: ASP: vartheta & p} or 
\eqref{S3: ASP: vartheta & p-2}. Then, for any $w\in
E_{\frac{1}{p}}$, the operator 
$$
 \sA(w)\in \mathcal{MR}_p(W^{2,\vartheta}_{p,\pi}(\QqG),
 L^{\vartheta+2\varepsilon}_{p,\pi}(\QqG)).
$$ 
\end{prop}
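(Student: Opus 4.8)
The plan is to check that $\sA(w)$ is bounded from $E_1$ to $E_0$ with $dom(\sA(w))=E_1$, and then to prove that $\sA(w)$ is $\mathscr{R}$-sectorial on $E_0$ with $\mathscr{R}$-angle strictly below $\pi/2$; maximal $L_p$-regularity for $1<p<\infty$ then follows from the Weis characterization of maximal regularity (see \cite{DenHiePru03}). For the mapping property I would just track weights: if $u\in W^{2,\vartheta}_{p,\pi}(\QqG)$, then $\nabla u\in W^{1,\vartheta+1}_{p,\pi}(\QqG,\R^N)$ by Lemma~\ref{S2: nabla}(i), $\alpha_\varepsilon(H+w)\in BC^{1,2\varepsilon-2}_\pi(\QqG)$ by Lemma~\ref{S3.1: alpha BC1}, hence $\alpha_\varepsilon(H+w)\nabla u\in W^{1,\vartheta+2\varepsilon-1}_{p,\pi}(\QqG,\R^N)$ by Lemma~\ref{S2: pointwise mul}, and a final application of Lemma~\ref{S2: nabla}(ii) places $\sA(w)u=-\div\bigl(\alpha_\varepsilon(H+w)\nabla u\bigr)$ in $W^{0,\vartheta+2\varepsilon}_{p,\pi}(\QqG)=E_0$; note the weight shift $2\varepsilon$ between $E_1$ and $E_0$ is exactly the one dictated by the degeneracy of $\alpha_\varepsilon(H+w)$ at $\Gamma$.

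Next I would pass, via the identification \eqref{S2: T=Q}, to the singular manifold $(\M,g)$, where $\sA(w)$ becomes $u\mapsto-\div_g(a\,\nabla_g u)$ with $a:=\phi_*\alpha_\varepsilon(H+w)$. The structural point is that $a$ is a \emph{$\rho$-elliptic} coefficient of weight $2\varepsilon-2$: writing $a=\rho^{2-2\varepsilon}b$ with $b:=\rho^{2\varepsilon-2}a$, Lemma~\ref{S3.1: unif bd of alpha} gives $1/C<b<C$ on $\M$, while Lemmas~\ref{S3.1: alpha BC1} and~\ref{S2: change of wgt} give $b\in BC^{1,0}_\pi(\QqG)\doteq\phi^*BC^1(\hat{\M})$, i.e.\ $b$ is a genuine $BC^1$-function on the rescaled manifold $(\hat{\M},\hat{g})=(\M,\rho^{-2}g)$, which has bounded geometry. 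Conjugating $-\div_g(a\nabla_g\cdot)$ by suitable powers of $\rho$ (Lemma~\ref{S2: change of wgt}) and re-expressing it with respect to $\hat{g}$ transforms it into a uniformly strongly elliptic second-order operator in divergence form on $(\hat{\M},\hat{g})$ whose principal coefficient is $b$ (hence $BC^1$ and bounded below) and whose lower-order coefficients, produced by derivatives of $\rho$, are bounded and continuous — here one uses that $\rho$ is a distance function near $\GT$ and constant away from it, so $|\nabla_g\rho|_g$ is bounded.

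It then remains to feed this into the maximal-regularity machinery for singular manifolds. On the bounded-geometry manifold $(\hat{\M},\hat{g})$, a uniformly strongly elliptic divergence-form operator with $BC^1$ principal part and bounded continuous lower-order terms is $\mathscr{R}$-sectorial with $\mathscr{R}$-angle $<\pi/2$, if necessary after adding a large constant $\omega>0$, by the localized $\mathscr{R}$-boundedness estimates of \cite{DenHiePru03} carried out on $\hat{\M}$ exactly as in \cite{Ama13b, Shao15, Shao1502}; undoing the conjugation gives $\omega+\sA(w)\in\RS(E_0)$ with the required angle, hence $\omega+\sA(w)\in\mathcal{MR}_p(E_1,E_0)$. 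The shift can be removed: under \eqref{S3: ASP: vartheta & p} or \eqref{S3: ASP: vartheta & p-2} the weight $\vartheta$ is negative enough that $E_1$ contains no nonzero constant, and a Rellich-type compactness of $E_1\hookrightarrow E_0$ together with an integration-by-parts argument (using $E_1\hookrightarrow BC^1_\pi(\Qq)$ with vanishing trace on $\Gamma$) shows $\sA(w)$ is injective, so that a full left half-plane lies in $\rho(-\sA(w))$ and $\sA(w)$ itself is $\mathscr{R}$-sectorial with angle $<\pi/2$; therefore $\sA(w)\in\mathcal{MR}_p\bigl(W^{2,\vartheta}_{p,\pi}(\QqG),L^{\vartheta+2\varepsilon}_{p,\pi}(\QqG)\bigr)$. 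I expect the main obstacle to be the middle step, namely recognizing $-\div_g(a\nabla_g\cdot)$ as an admissible operator in the singular-manifold calculus and verifying the hypotheses of the abstract $\mathscr{R}$-sectoriality theorem — chiefly that the rescaled coefficient $b$ is $BC^1$ and uniformly elliptic, which is precisely what Lemmas~\ref{S3.1: unif bd of alpha}--\ref{S3.1: alpha BC1} supply, and that the $2\varepsilon$ weight shift produced by the degeneracy is compatible with the pair $(E_1,E_0)$; the removal of the spectral shift is a softer, secondary matter.
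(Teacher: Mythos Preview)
Your outline correctly identifies the target ($\mathscr{R}$-sectoriality with angle $<\pi/2$, then Weis' theorem) and the boundedness $\sA(w)\in\L(E_1,E_0)$ is fine. The gap is in the middle step, and it is exactly the one you flag as ``the main obstacle'' but do not actually resolve.

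The reduction ``conjugate by powers of $\rho$ and pass to $\hat g=\rho^{-2}g$'' produces a uniformly elliptic operator on a bounded-geometry manifold \emph{only} when the weight shift between domain and target is $2$; here it is $2\varepsilon\in(0,2)$. With shift $2\varepsilon$ the conjugated operator picks up factors of $\rho^{2-2\varepsilon}$ (equivalently $\rho^{2\varepsilon-2}$) that are unbounded on $(\hat\M,\hat g)$, so your claim that the lower-order coefficients are ``bounded and continuous'' fails as stated. Knowing only that $b=\rho^{2\varepsilon-2}a\in BC^{1,0}$ and $b\sim{\bf 1}$ is not enough: this is precisely why \cite{Shao15} introduces the additional structural hypothesis~$\mathscr H_{2\varepsilon}$. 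That hypothesis requires \emph{two-sided} bounds of the form
\[
\alpha_\varepsilon(H)^{\frac{1}{2-2\varepsilon}}|\nabla h|\sim{\bf 1},
\qquad
\alpha_\varepsilon(H)^{\frac{2\varepsilon}{2-2\varepsilon}}\,\div\!\bigl(\alpha_\varepsilon(H)\nabla h\bigr)\sim{\bf 1},
\]
with $h=\sg(1-2\varepsilon)\log\alpha_\varepsilon(H)$, and these are supplied by the pointwise equivalences in Lemmas~\ref{S3.1: der alpha} and~\ref{S3.1: 2nd der of alpha}, not by the $BC^1$-bound alone. In particular the second condition needs the \emph{lower} bound on $\d^{2\varepsilon}\frac{d^2}{dx^2}\alpha_\varepsilon(H)$ and explains the exclusion $\varepsilon\neq\tfrac12$. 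Your argument never invokes this second-order information.

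This also forces the order of the argument to differ from yours. The $\mathscr H_{2\varepsilon}$ verification requires $\alpha_\varepsilon(H)\in BC^{2,2\varepsilon-2}$, which holds for $H$ but \emph{not} for $H+w$ (Lemma~\ref{S3.1: alpha BC1} gives only $BC^{1,2\varepsilon-2}$ in general). The paper therefore first proves $\sA(0)\in\RS(X_0(\delta))$ on a small collar $\bsUG$ via \cite[Theorem~5.18]{Shao15}, then reaches $\sA(w)$ by the perturbation estimate of Lemma~\ref{S3.1: approx H to H+w} (making $\|(\sA(w)-\sA(0))\sA(0)^{-1}\|$ small by shrinking $\delta$) together with \cite[Proposition~4.2]{DenHiePru03}, and finally glues the local result to all of $\QqG$ as in \cite[Theorem~4.8]{Shao1502}. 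Treating $\sA(w)$ directly, as you propose, cannot feed into \cite{Shao15} because the required $BC^2$-type control of the coefficient is unavailable for $w\neq 0$.

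Two minor points. The removal of the spectral shift $\omega$ is unnecessary for the stated conclusion: $\omega+\sA(w)\in\RS$ with angle $<\pi/2$ already yields $\sA(w)\in\mathcal{MR}_p(E_1,E_0)$ by \cite[Theorem~4.4]{DenHiePru03}. And your route to removing it relies on compactness of $E_1\hookrightarrow E_0$, which is not established here (on singular manifolds such compactness can fail) and is not used in the paper.
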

\begin{proof}
This theorem is a consequence of the work in \cite{Shao15,
  Shao1502}. We would like to refer the reader to these two papers for
more details, and thus only necessary explanations will be pointed out
here.

(i) For small $\delta>0$, by \cite[Theorem~1.6]{Ama14}, $(\bsUG,dx)$
is a  singular manifold.  

Lemmas~\ref{S3.1: unif bd of alpha}-\ref{S3.1: 2nd der of alpha} imply that
$$\alpha_\varepsilon(H)^{\frac{1}{2-2\varepsilon}}\in
BC^{2,-1}(\bsUG),\quad
\d\alpha_\varepsilon(H)^{\frac{1}{2-2\varepsilon}}\sim {\bf 1}. $$ 
Put $\displaystyle h=\sg(1-2\varepsilon)\log \alpha_\varepsilon(H)$. 
Then direct computations show
\begin{align*}
|\alpha_\varepsilon(H)^{\frac{1}{2-2\varepsilon}}\frac{d}{dx} h|=
  |\alpha_\varepsilon(H)^{\frac{2\varepsilon-1}{2-2\varepsilon}}
  \frac{d}{dx} \alpha_\varepsilon(H)|  
\sim  |\d^{2\varepsilon-1} \frac{d}{dx} \alpha_\varepsilon(H)|\sim {\bf 1},
\end{align*}
via Lemma~\ref{S3.1: der alpha},
and by Lemma~\ref{S3.1: 2nd der of alpha}
\begin{align*}
&\alpha_\varepsilon(H)^{\frac{2\varepsilon}{2-2\varepsilon}}
  \frac{d}{dx} (\alpha_\varepsilon(H) \frac{d}{dx} h)\\ 
=&\sg(1-2\varepsilon)
   \alpha_\varepsilon(H)^{\frac{2\varepsilon}{2-2\varepsilon}}
   \frac{d^2}{dx^2} \alpha_\varepsilon(H)\sim
   \sg(1-2\varepsilon)\d^{2\varepsilon} \frac{d^2}{dx^2}
   \alpha_\varepsilon(H) \sim {\bf 1}. 
\end{align*}
Therefore, the function $h$ satisfies conditions
($\sH_{2\varepsilon}1$) and ($\sH_{2\varepsilon}2$) defined in
\cite[Section~5.1]{Shao15} with $\lambda=2\varepsilon$ on
$(\bsUG,dx)$. This means that $(\bsUG,dx)$ with
$\alpha_\varepsilon(H)^{\frac{1}{2-2\varepsilon}}$ as a {\em
  singularity function} is  a singular manifold  satisfying
property $\sH_{2\varepsilon}$. The reader may refer to \cite{Shao15}
for more details.

The proof of \cite[Theorem~5.18]{Shao15} shows that  the operator
$-\sA(0)$  generates an analytic contraction strongly continuous
semigroup on $L^{2\varepsilon+\vartheta}_p(\bsUG)$ 
with 
$$D(\sA(0))\doteq \mathring{W}^{2,\vartheta}_p(\bsUG),\quad 1<p<\infty.
$$ 
Here for $\F\in\{BC, W_p\}$,  
$ \mathring{\F}^{s,\vartheta}(\bar{\mathscr{U}}_{\delta_k}\setminus
\Gamma) $ is defined as the closure of $\mathcal{D}(\sUG)$ in $
\F^{s,\vartheta}_\pi(\QqG) $. One can show that the semigroup
$\{e^{-t\sA(0)}\}_{t\geq 0}$ is positive by means of the same argument as in
step (iii) of the proof for \cite[Theorem~4.8]{Shao1502}.  

(ii) Let 
$$
X_0(\delta)=L^{2\varepsilon+\vartheta}_p(\bsUG),\quad X_1(\delta)=
\mathring{W}^{2,\vartheta}_p(\bsUG). 
$$
Now, following exactly the same argument as in step (iv) and (4.14) of
the proof for \cite[Theorem~4.8]{Shao1502}, one concludes that 
$$
\sA(0)\in \RS(X_0(\delta))\quad \text{with } \phi^R_{\sA(0)}<\pi/2.
$$
Moreover, by the definition of $\mathscr{R}$-bound, it is easy to
verify that for some $\theta>\pi/2$ 
$$\mathscr{R}\{\mu(\mu+\sA)^{-1}:\, \mu\in \Sigma_\theta\} \text{ is
  increasing in }\delta.$$ 
So is the norm $\|\sA^{-1}(0)\|_{X_0(\delta),X_1(\delta)}$. 

It follows from Lemmas~\ref{S2: nabla} and \ref{S3.1: approx H to H+w}
that, by shrinking $\delta$, we can always make 
$
\|(\sA(w)-\sA(0)) \sA^{-1}(0)\|_{\L(X_0(\delta))}
$
arbitrarily small. 
As a direct consequence of the perturbation theorem of
$\mathscr{R}$-sectorial operators,
cf. \cite[Proposition~4.2]{DenHiePru03}, we infer that 
$$
\sA(w)\in \RS(X_0(\delta))\quad \text{with } \phi^R_{\sA(w)}<\pi/2.
$$
The last step is to use a standard decomposition and gluing
procedure as in step (v)-(vii) of the proof for
\cite[Theorem~4.8]{Shao1502}, and we can prove that for some
$\omega\geq 0$ 
$$
\omega+\sA(w)\in \RS(L_{p,\pi}^{2\varepsilon+\vartheta}(\QqG))\quad \text{with }
\phi^R_{\sA(w)}<\pi/2. 
$$
Then the assertion follows from \cite[Theorem~4.4]{DenHiePru03}.
\end{proof}

Now we will apply Proposition \ref{S3.1: MR-thm} to proving
existence and uniqueness of solutions to equation
\eqref{P-M eq}. We first consider the problem  linearized in the
initial datum $H$. 
\begin{equation}\label{P-M eq-linear}
 \begin{cases}
   \partial_t u -\div\bigl(\alpha _\varepsilon(H)  \nabla u\bigr)=0
  &\text{in }\Qq\times (0,\infty), \\ 
  u&\text{periodic,}\\
  u(0)=H  &\text{in }\Qq .\end{cases}
\end{equation}
Clearly, $u^*\equiv H$ solves \eqref{P-M eq-linear}.

Then we look at the nonlinear problem
\begin{equation}\label{P-M eq-nonlinear}
 \begin{cases}
  \partial_t u -\div\bigl(\alpha _\varepsilon(u+u^*)  \nabla u\bigr)=0
  &\text{in }\Qq\times (0,\infty), \\
  u&\text{periodic,}\\
  u(0)=w_0  &\text{in }\Qq .
 \end{cases}
\end{equation}

Take $R>0$ so large that $w_0\in B_R$, then by
Lemmas~\ref{S2: nabla}, \ref{S2:
  pointwise mul}, \ref{S3.1: alpha BC1} and Proposition~\ref{S3.1: MR-thm},
\begin{equation}\label{S3.1: Reg-main}
\bigl[w\mapsto \div\bigl(\alpha _\varepsilon(H+w) \nabla
\cdot\bigr)\bigr]\in C^\omega\bigl(B_R,
\mathcal{MR}_p(E_1,E_0)\bigr).
\end{equation}
Hence the condition in Theorem~\ref{S2.2: Thm-MR} is satisfied. 
The same theorem implies the existence of a unique solution
$$
 \tilde{u}\in \bE_1(J):= L_p(J, E_1) \cap H^1_p(J, E_0)
$$
to \eqref{P-M eq-nonlinear}. We thus conclude that
$\hat{u}=\tilde{u}+u^*$ is a solution to \eqref{P-M eq} with initial
value $u_0=H+w_0$. 

We will show that $\hat{u}$ is indeed the unique solution in the class
$\bE_1(J)\oplus \R_\Gamma$, where 
$$
 \bE_1(J)\oplus \R_\Gamma:= \big\{u\in L_{1,loc}(J\times (\QqG)):
 u=u_1+u_2,\: u_1\in \bE_1(J),\, u_2\in\R_\Gamma\big\}. 
$$
Note that by \cite[Formula~(2.1)]{CleLi93} and \eqref{S3: ASP:
  vartheta & p} and \eqref{S3: ASP: vartheta & p-2} 
$$
\bE_1(J)\hookrightarrow C(J, E_{\frac{1}{p}}),\quad \text{and}\quad
\R_\Gamma \cap E_{\frac{1}{p}}=\{{\bf 0}_{\Qq\setminus\Gamma}\}.
$$
Indeed, by Proposition~\ref{S2: Sobolev embedding},
$E_{\frac{1}{p}}\hookrightarrow BC^{1,
  \vartheta+\frac{2\varepsilon+N}{p}}_\pi(\QqG)$. But
$p>-\frac{N+2\varepsilon}{\vartheta}$ in \eqref{S3: ASP: vartheta & p}
or $p>\frac{2N+2}{\varepsilon}$ in \eqref{S3: ASP: vartheta & p-2}
implies that  
$$
 u(x)\to 0 \quad \text{as}\quad x\to \Gamma,\quad u\in BC^{1,
   \vartheta+\frac{2\varepsilon+N}{p}}_\pi(\QqG).
$$
For any $u\in \bE_1(J)\oplus \R_\Gamma$, we have thus a unique
decomposition
$$
 u=u_1 +u_2 \quad \text{with}\quad u_1\in \bE_1(J),\quad u_2\in
 \R_\Gamma.
$$
If $u\in \bE_1(J)\oplus \R_\Gamma$ solves \eqref{P-M eq}, by
$u(0)=u_1(0)+u_2= w_0+H$, we immediately infer that 
$u_2=H$. Now the uniqueness of the solution to \eqref{P-M
  eq-nonlinear} implies $u_1=\tilde{u}$. The uniqueness of
the solution to \eqref{P-M eq} in $\bE_1(J)\oplus \R_\Gamma$ follows.

We are now ready to state the following well-posedness theorem for
\eqref{P-M eq}. 
\begin{theorem}\label{S3.1: PM-thm-ID}
Assume that one of the following conditions holds
\begin{itemize}
\item[] $\varepsilon\in (0,\frac{1}{2})\cup (\frac{1}{2},1)$, $\vartheta\leq -2$ and
  $\displaystyle p>\max\{ \frac{3}{\varepsilon} ,
  -\frac{1+2\varepsilon}{\vartheta}\}$, or 
\item[] $\varepsilon\in (1-\frac{1}{2p},1)$,
  $\vartheta=-2\varepsilon$, and $\displaystyle p>\max\{
  \frac{4}{\varepsilon} , \frac{9}{2}\}.$ 
\end{itemize}
Suppose that $\Qq=[-1,1)$ and that $H$ is a piecewise constant
function on $\Qq$. Let $\Gamma$ be the discontinuity set of $H$. Then,
given any $u_0=H+w_0 $ with 
$$
 w_0\in W^{2-2/p,\vartheta+\frac{2\varepsilon}{p}}_{p,\pi}(\QqG),
$$
equation~\eqref{P-M eq} has a unique solution 
$$
 u \in  L_p(J, W^{2,\vartheta}_{p,\pi}(\QqG)) \cap H^1_p(J,
 L^{\vartheta+2\varepsilon}_{p,\pi}(\QqG))\oplus \R_\Gamma .
$$
for some $J:=[0,T]$ with $T=T(u_0)>0$. Moreover,
$$
 u\in C(J,W^{2-2/p,\vartheta+\frac{2\varepsilon}{p}}_{p,\pi}(\QqG))
 \oplus\R_\Gamma.
$$
\end{theorem}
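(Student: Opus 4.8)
The plan is to recast \eqref{P-M eq} as an abstract quasilinear Cauchy problem of the form \eqref{S2.2: evolution eq} in the weighted $L_p$-scale $E_0=L^{\vartheta+2\varepsilon}_{p,\pi}(\QqG)$, $E_1=W^{2,\vartheta}_{p,\pi}(\QqG)$, $E_{\frac1p}=W^{2-2/p,\vartheta+\frac{2\varepsilon}{p}}_{p,\pi}(\QqG)$, and to invoke the Cl\'ement--Li theorem (Theorem~\ref{S2.2: Thm-MR}). Since $H\in\R_\Gamma$, the function constant in time $u^*\equiv H$ solves the problem \eqref{P-M eq-linear} linearized at $H$; substituting $\hat u=u+u^*$ turns \eqref{P-M eq} with datum $u_0=H+w_0$ into the nonlinear problem \eqref{P-M eq-nonlinear} for $u$, with $u(0)=w_0\in E_{\frac1p}$. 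Choosing $R>0$ so large that $w_0\in B_R$, the spatial operator is the family $\sA(w)u=-\div(\alpha_\varepsilon(H+w)\nabla u)$, $w\in B_R$.

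I would then verify the hypotheses of Theorem~\ref{S2.2: Thm-MR} with $X_0=E_0$, $X_1=E_1$, $X_{1/p}=E_{\frac1p}$, $U=B_R$, and $f\equiv 0$. Maximal $L_p$-regularity $\sA(w)\in\mathcal{MR}_p(E_1,E_0)$ for each $w\in E_{\frac1p}$ is precisely Proposition~\ref{S3.1: MR-thm}, and real analyticity of $w\mapsto\sA(w)$ as a map $B_R\to\mathcal{MR}_p(E_1,E_0)$ is the statement \eqref{S3.1: Reg-main}: it combines the analytic dependence $[w\mapsto\alpha_\varepsilon(H+w)]\in C^\omega(B_R,BC^{1,2\varepsilon-2}_\pi(\QqG))$ from Lemma~\ref{S3.1: alpha BC1} with the mapping properties of $\nabla,\div$ from Lemma~\ref{S2: nabla} and the weighted pointwise multiplication of Lemma~\ref{S2: pointwise mul}; tracking the weight exponents along $W^{2,\vartheta}_{p,\pi}\xrightarrow{\ \nabla\ }W^{1,\vartheta+1}_{p,\pi}\xrightarrow{\ \alpha_\varepsilon(H+w)\,\cdot\ }W^{1,\vartheta+2\varepsilon-1}_{p,\pi}\xrightarrow{\ \div\ }L^{\vartheta+2\varepsilon}_{p,\pi}$ confirms $\sA(w)\in\L(E_1,E_0)$ with analytic dependence on $w$. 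Theorem~\ref{S2.2: Thm-MR} then produces $T=T(u_0)>0$ and a unique $\tilde u\in\bE_1(J)=L_p(J,E_1)\cap H^1_p(J,E_0)$ solving \eqref{P-M eq-nonlinear}, so that $\hat u=\tilde u+H$ solves \eqref{P-M eq} with datum $u_0$ and lies in $\bE_1(J)\oplus\R_\Gamma$; the extra time-regularity $u\in C(J,E_{\frac1p})\oplus\R_\Gamma$ is the trace embedding $\bE_1(J)\hookrightarrow C(J,E_{\frac1p})$ of \cite[Formula~(2.1)]{CleLi93}.

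The delicate point, and the one I expect to need the most care, is uniqueness in the enlarged class $\bE_1(J)\oplus\R_\Gamma$ rather than merely in $\bE_1(J)$. The key structural fact is $\R_\Gamma\cap E_{\frac1p}=\{{\bf 0}_{\QqG}\}$: Proposition~\ref{S2: Sobolev embedding} gives $E_{\frac1p}\hookrightarrow BC^{1,\vartheta+\frac{2\varepsilon+N}{p}}_\pi(\QqG)$, and under either \eqref{S3: ASP: vartheta & p} or \eqref{S3: ASP: vartheta & p-2} the weight exponent $\vartheta+\frac{2\varepsilon+N}{p}$ is strictly negative (this is exactly where $p>-\tfrac{N+2\varepsilon}{\vartheta}$, resp.\ $p>\tfrac{2N+2}{\varepsilon}$, is used), forcing every element of $E_{\frac1p}$ to vanish along $\Gamma$, so that the only function in $E_{\frac1p}$ that is constant on each component of $\QqG$ is $0$. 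Hence each $u\in\bE_1(J)\oplus\R_\Gamma$ admits a \emph{unique} decomposition $u=u_1+u_2$ with $u_1\in\bE_1(J)$ and $u_2\in\R_\Gamma$; if $u$ solves \eqref{P-M eq}, then evaluation at $t=0$ together with $u_1(0)\in E_{\frac1p}$ and $H\in\R_\Gamma$ forces $u_2=H$, whence $u_1$ solves \eqref{P-M eq-nonlinear} and uniqueness of $\tilde u$ yields $u_1=\tilde u$, i.e.\ $u=\hat u$. Finally, although the construction was carried out for the model datum $H=\chi_{(-1/2,1/2)}$, only the properties $H\in\R_\Gamma$, the kernel representation of $\partial^{1-\varepsilon}H$ (which persists because $H'$ is a finite signed combination of Dirac masses at the points of $\Gamma$), and the two-sided bound \eqref{S3.1: est for H} near $\Gamma$ were used; all three hold verbatim for an arbitrary piecewise constant $H$ with discontinuity set $\Gamma$, so the stated generality follows.
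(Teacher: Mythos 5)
Your proposal is correct and follows essentially the same route as the paper: the substitution $u=\tilde u+u^*$ with $u^*\equiv H$, the Cl\'ement--Li theorem applied via Proposition~\ref{S3.1: MR-thm} and the analyticity statement \eqref{S3.1: Reg-main}, and uniqueness in $\bE_1(J)\oplus\R_\Gamma$ through the identity $\R_\Gamma\cap E_{\frac1p}=\{{\bf 0}_{\QqG}\}$ obtained from the weighted Sobolev embedding. Your weight bookkeeping for $\sA(w)\in\L(E_1,E_0)$ and the closing remark on passing from the model datum to a general piecewise constant $H$ match the paper's (implicit) treatment.
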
 

\subsection{Two Dimensional Case}

In dimension two, the fractional gradient is defined again via Fourier
series. For a periodic function $u$, $\nabla^{1-\varepsilon} u$ is
defined as 
$$
 \nabla^{1-\varepsilon}u:=\mathcal{F}^{-1}
 \operatorname{diag}\{|k|^{-\varepsilon}\}\mathcal{F}\, |\nabla u|.
$$
The choice of $|\nabla u|$ instead of $\nabla u$ is mainly for
computational simplification.
\begin{lem}\label{S3.2: fractional grad kernel}
For all $u\in C^1_\pi(\Qqq)$ 	
$$
 \nabla^{1-\varepsilon}u(x)= c_\varepsilon \int_{ \Qqq}  \frac{|\nabla
   u|(y)}{|x-y|^{2-\varepsilon}}\, dy +\int_{\Qqq}
 h_\varepsilon(x-y)|\nabla u|(y)\, dy
$$
for some constant $c_\varepsilon>0$ and $h_\varepsilon\in C^\infty$.
\end{lem}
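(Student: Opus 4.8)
The plan is to mimic exactly the proof of Lemma~\ref{S3.1: Sing ker of frac grad-ID}, working now in two dimensions. By definition of $\nabla^{1-\varepsilon}$ via Fourier series, we have $\nabla^{1-\varepsilon}u = G_\varepsilon * |\nabla u|$ where $G_\varepsilon$ is the periodic distribution on $\Qqq=[-1,1)^2$ with Fourier coefficients $\widehat{G}_\varepsilon(k) = |k|^{-\varepsilon}$ for $k\in\mathbb{Z}^2\setminus\{0\}$ (and $0$ at the origin). As before, insert a smooth radial cut-off $\eta$ with $\eta(\xi)=0$ for $|\xi|\le 1/4$ and $\eta(\xi)=1$ for $|\xi|\ge 1/2$, so that on $\mathbb{Z}^2$ the sequence $|k|^{-\varepsilon}$ agrees with $\eta(k)|k|^{-\varepsilon}$, the latter now being the restriction to the lattice of a genuine $C^\infty(\mathbb{R}^2)$ function that is $0$ near the origin.

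First I would apply the Poisson summation formula (in the two-dimensional form, \cite[p.~362]{Stein93}) to obtain, for $x\in(-1,1)^2$,
$$
 G_\varepsilon(x) = \sum_{k\in\mathbb{Z}^2} \eta(k)|k|^{-\varepsilon} e^{\pi i k\cdot x} = g_\varepsilon(x) + \sum_{k\in\mathbb{Z}^2\setminus\{0\}} g_\varepsilon(x+2k),
$$
where $g_\varepsilon = \mathcal{F}\bigl(\eta|\cdot|^{-\varepsilon}\bigr)$ (with the appropriate normalization matching the $e^{\pi i k\cdot x}$ convention, hence the dilation by $2$ in the translates). Since $\eta|\cdot|^{-\varepsilon}$ is smooth, its Fourier transform $g_\varepsilon$ is rapidly decreasing, so the tail sum $\sum_{k\ne 0} g_\varepsilon(\cdot+2k)$ is $C^\infty$ and bounded on $(-1,1)^2$. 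Next I would split off the singular part: write $\eta|\cdot|^{-\varepsilon} = |\cdot|^{-\varepsilon} + (\eta-1)|\cdot|^{-\varepsilon}$, so that
$$
 g_\varepsilon = \mathcal{F}\bigl(|\cdot|^{-\varepsilon}\bigr) + \mathcal{F}\bigl((\eta-1)|\cdot|^{-\varepsilon}\bigr).
$$
Here $(\eta-1)|\cdot|^{-\varepsilon}$ is compactly supported (and integrable near the origin since $\varepsilon<2$), hence its Fourier transform is $C^\infty$; and the classical identity for the Fourier transform of the homogeneous Riesz kernel in $\mathbb{R}^2$ gives $\mathcal{F}\bigl(|\cdot|^{-\varepsilon}\bigr) = c_\varepsilon\, |\cdot|^{-(2-\varepsilon)}$ for a positive constant $c_\varepsilon$ (valid for $0<\varepsilon<2$, so in particular for $\varepsilon\in(0,1)$). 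Collecting everything, $G_\varepsilon(x) = c_\varepsilon |x|^{-(2-\varepsilon)} + h_\varepsilon(x)$ on $(-1,1)^2$, with
$$
 h_\varepsilon = \mathcal{F}\bigl((\eta-1)|\cdot|^{-\varepsilon}\bigr) + \sum_{k\in\mathbb{Z}^2\setminus\{0\}} g_\varepsilon(\cdot+2k) \in C^\infty.
$$
Convolving against $|\nabla u|\in C(\Qqq)$ (which is legitimate since $|x|^{-(2-\varepsilon)}$ is locally integrable in $\mathbb{R}^2$ for $\varepsilon>0$) yields the claimed representation.

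The main obstacle, and the only point requiring care beyond bookkeeping, is the correct interpretation and justification of the two-dimensional Poisson summation step together with the normalization constants: one must check that $\eta|\cdot|^{-\varepsilon}$ and its Fourier transform decay fast enough to make the double series converge absolutely and to justify the pointwise identity on the open cube (this is where smoothness of $\eta|\cdot|^{-\varepsilon}$ and rapid decay of $g_\varepsilon$ are used), and that the exponents line up so that the homogeneity $2-\varepsilon$ (rather than $1-\varepsilon$ as in dimension one) emerges — this is simply the reflection of $\mathcal{F}(|\cdot|^{-\varepsilon}) = c_\varepsilon|\cdot|^{-(N-\varepsilon)}$ with $N=2$. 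Everything else is a verbatim transcription of the one-dimensional argument in Lemma~\ref{S3.1: Sing ker of frac grad-ID}.
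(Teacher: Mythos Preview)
Your proposal is correct and is precisely the argument the paper has in mind: its proof reads in full ``It follows from a proof similar to that of Lemma~\ref{S3.1: Sing ker of frac grad-ID} and the two dimensional Poisson's summation formula,'' and you have written out exactly that, with the expected change of homogeneity exponent from $1-\varepsilon$ to $2-\varepsilon$ coming from $\mathcal{F}(|\cdot|^{-\varepsilon})=c_\varepsilon|\cdot|^{-(N-\varepsilon)}$ at $N=2$.
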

\begin{proof}
It follows from a proof similar to that of Lemma~\ref{S3.1: Sing ker of frac
  grad-ID} and the two dimensional Poisson's summation formula. 
\end{proof}
\begin{lem}\label{S3.2: frac grad-2D}
$$
\nabla^{1-\varepsilon}\in \L(E_{\frac{1}{p}}, BC^1_\pi(\Qqq)).
$$
\end{lem}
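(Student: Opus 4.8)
The plan is to follow the proof of the one‑dimensional counterpart, Lemma~\ref{S3.1: fractional grad-1D}, treating separately the two parameter regimes \eqref{S3: ASP: vartheta & p} and \eqref{S3: ASP: vartheta & p-2} (with $N=2$). The point is that $\nabla^{1-\varepsilon}$ factors as the composition of three maps, $u\mapsto\nabla u\mapsto|\nabla u|\mapsto \mathcal{F}^{-1}\operatorname{diag}\{|k|^{-\varepsilon}\}\mathcal{F}\,|\nabla u|$, and each of them can be controlled on the periodic function spaces over $\Qqq$ by classical function‑space theory on the closed manifold $\mathbb{T}^2$ (using $\F^s_\pi(\Qqq)\doteq\phi^*\F^s(\mathbb{T}^2)$, cf.\ \cite[Chapter~7]{Trib92}): first, passing from the weighted trace space $E_{\frac{1}{p}}$ to an \emph{unweighted} periodic space on the whole of $\Qqq$; then losing one derivative under $\nabla$; then the boundedness of the pointwise Euclidean‑norm (superposition) operator $v\mapsto|v|$ on the resulting fractional‑order space; then the $\varepsilon$‑smoothing of the multiplier $\mathcal{F}^{-1}\operatorname{diag}\{|k|^{-\varepsilon}\}\mathcal{F}$; and, finally, a Sobolev/Besov embedding into $BC^1_\pi(\Qqq)$, into which the lower bounds on $p$ enter.

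Under \eqref{S3: ASP: vartheta & p} with $N=2$, the first step is exactly \eqref{S3: Trace space embedding}, namely $E_{\frac{1}{p}}\hookrightarrow W^{2-2/p}_{p,\pi}(\Qqq)$, and $\nabla$ maps this into $W^{1-2/p}_{p,\pi}(\Qqq,\R^2)$. Since the hypotheses force $p>2$, the exponent $1-2/p$ lies in $(0,1)$, and the superposition operator $v\mapsto|v|$ — which vanishes at $0$ and satisfies $\bigl||v(x)|-|v(y)|\bigr|\le|v(x)-v(y)|$ pointwise — is bounded (in fact norm non‑increasing) on $W^{1-2/p}_{p,\pi}(\Qqq)$, as one reads off directly from the Gagliardo seminorm. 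The multiplier $\mathcal{F}^{-1}\operatorname{diag}\{|k|^{-\varepsilon}\}\mathcal{F}$ (with the zero mode treated as in Lemma~\ref{S3.2: fractional grad kernel}) differs from $(1-\Delta_g)^{-\varepsilon/2}$ by a bounded Fourier multiplier, hence is $\varepsilon$‑smoothing — first on the Bessel potential scale and then on the Sobolev–Slobodeckii scale by real interpolation, exactly as in Lemma~\ref{S3.1: fractional grad-1D} — so $\nabla^{1-\varepsilon}$ maps $E_{\frac{1}{p}}$ boundedly into $W^{1+\varepsilon-2/p}_{p,\pi}(\Qqq)$. The embedding $W^{1+\varepsilon-2/p}_{p,\pi}(\Qqq)\hookrightarrow BC^1_\pi(\Qqq)$ requires $1+\varepsilon-2/p-2/p>1$, i.e.\ $p>4/\varepsilon$, which is precisely the content of $p>\frac{N+2}{\varepsilon}$ in \eqref{S3: ASP: vartheta & p} for $N=2$. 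In the regime \eqref{S3: ASP: vartheta & p-2} the only change is in the first step: by Lemma~\ref{S3: reduction p/2} together with $E_0\hookrightarrow L_{p/2,\pi}(\Qqq)$ (the weight in $E_0$ is trivial since $\vartheta=-2\varepsilon$, and $\Qqq$ has finite measure with $p/2<p$) and real interpolation one obtains, as in the one‑dimensional proof, $E_{\frac{1}{p}}\hookrightarrow\bigl(W^2_{p/2,\pi}(\Qqq),L_{p/2,\pi}(\Qqq)\bigr)_{1-1/p,p}\doteq B^{2-2/p}_{p/2,p,\pi}(\Qqq)$. Running the same three maps on the Besov scale then yields $\nabla^{1-\varepsilon}\in\L\bigl(E_{\frac{1}{p}},B^{1+\varepsilon-2/p}_{p/2,p,\pi}(\Qqq)\bigr)$, and the Besov embedding $B^{1+\varepsilon-2/p}_{p/2,p,\pi}(\Qqq)\hookrightarrow BC^1_\pi(\Qqq)$ requires $1+\varepsilon-2/p-4/p>1$, i.e.\ $p>6/\varepsilon$, which is $p>\frac{2N+2}{\varepsilon}$ in \eqref{S3: ASP: vartheta & p-2} for $N=2$.

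The genuinely new ingredient compared with the one‑dimensional case, and the step I expect to be the main obstacle, is the presence of the modulus inside the fractional operator: $\nabla^{1-\varepsilon}$ is no longer linear, so the assertion $\nabla^{1-\varepsilon}\in\L(E_{\frac{1}{p}},BC^1_\pi(\Qqq))$ has to be understood as boundedness — indeed, local Lipschitz continuity — of the associated nonlinear map. The delicate part of this is the boundedness of the superposition operator $v\mapsto|v|$, which works only because the relevant smoothness index $1-2/p$ is \emph{strictly} less than $1$; one cannot afford to differentiate twice through the modulus, since $v\mapsto|v|$ is not $C^1$, and this is exactly why the reduction to the full cube $\Qqq$ in the first step must land one derivative below $BC^1$‑regularity. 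Apart from that, all the tools — the Fourier multiplier theorem on $\mathbb{T}^2$, real interpolation between $L_q$ and $W^2_q$, and the Sobolev and Besov embeddings — are standard, and the parameter windows in \eqref{S3: ASP: vartheta & p} and \eqref{S3: ASP: vartheta & p-2} have been chosen precisely so that the final embedding into $BC^1_\pi(\Qqq)$ holds.
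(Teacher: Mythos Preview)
Your approach is essentially what the paper intends: its proof of this lemma is the one-liner ``The proof is the same as that of Lemma~\ref{S3.1: fractional grad-1D}'', so the embedding of $E_{1/p}$ into an unweighted periodic space, followed by the $\varepsilon$-smoothing Fourier multiplier and a Sobolev/Besov embedding with the stated lower bounds on $p$, is exactly the argument. You have, however, noticed something the paper sweeps under the rug: in two dimensions $\nabla^{1-\varepsilon}u=\mathcal{F}^{-1}|k|^{-\varepsilon}\mathcal{F}\,|\nabla u|$ is \emph{not} linear, so the symbol ``$\mathcal{L}$'' must be read as boundedness of a nonlinear map, and one genuinely needs the extra step that $v\mapsto|v|$ acts boundedly on $W^{1-2/p}_{p,\pi}$ (resp.\ $B^{1-2/p}_{p/2,p,\pi}$). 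Your justification via the Gagliardo seminorm, which works precisely because $1-2/p\in(0,1)$, is correct and is the missing ingredient; the paper's one-line reference to the linear one-dimensional argument does not literally cover this.
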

\begin{proof}
The proof is the same as that of Lemma~\ref{S3.1: fractional grad-1D}
\end{proof}

We are interested in initial data close to linear combinations of
characteristic functions of  disjoint bounded $C^3$-domains. Just like
in the one dimensional case,  we take a generic initial value function
$H=\chi_{\Omega}$, where $\Omega\subset \mathring{\mathsf{Q}}^2$ is a
bounded $C^3$-domain, and let $\Gamma=\partial\Omega$.

Since $\Omega$ is a set of finite perimeter, it is reasonable to take
$|\nabla H|=\|\partial\Omega\| $. It is known that
$\|\partial\Omega\|=\mathcal{H}^1 \llcorner \Gamma$; 
see \cite[Section 5.1]{Eva92}. For any $\psi\in C^\infty_c(\Qqq) $,
\begin{align*}
 \langle \psi, \nabla^{1-\varepsilon}H \rangle &= \langle
 \mathcal{F}^{-1}|k|^{-\varepsilon}\mathcal{F}\psi, |
 \nabla H|\rangle\\ 
 &=\int_\Gamma \int_{\Qqq}  (\frac{c_\varepsilon}{|x-y|^{2-\varepsilon}}
 + h_\varepsilon(x-y))\psi(x)\, dx\,d\mathcal{H}^1(y)\\ 
 &= \int_{\Qqq} \psi(x) \int_\Gamma
  (\frac{c_\varepsilon}{|x-y|^{2-\varepsilon}}+
  h_\varepsilon(x-y))\,d\mathcal{H}^1(y)\, dx 
\end{align*}
by Fubini Theorem and Lemma~\ref{S3.2: fractional grad kernel}, and 
$\mathcal{H}^n$ is the $n$-dimensional Hausdorff measure. So we
have 
$$
 \nabla^{1-\varepsilon}H(x) = \int_\Gamma
 \frac{c_\varepsilon}{|x-y|^{2-\varepsilon}}\,d\mathcal{H}^1(y)+\int_\Gamma
 h_\varepsilon(x-y)\, d\mathcal{H}^1(y),\quad x\in \QqqG .
$$
Moreover, by its convolution definition, $\nabla^{1-\varepsilon}H\in
C^\infty_\pi(\QqqG)$.
\begin{prop}\label{S3.2: frac der of H-2D}
$$
 \nabla^{1-\varepsilon} H\in BC^{2,1-\varepsilon}_\pi(\QqqG),
$$
and the following estimates hold in a $\delta$-tubular
neighborhood $\sU$ of $\Gamma$
$$
 \quad \d^{1-\varepsilon}\nabla^{1-\varepsilon} H\sim {\bf 1} ,\quad
 \d^{2-\varepsilon}|\nabla\nabla^{1-\varepsilon} H|\sim {\bf 1},\quad
 \d^{3-\varepsilon}|\Delta\nabla^{1-\varepsilon} H|\sim {\bf 1},
$$
along with
$$
\sg(1-2\varepsilon)\d^{2\varepsilon}\Delta \alpha_\varepsilon(H)\sim
{\bf 1} 
$$
\end{prop}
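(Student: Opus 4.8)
The plan is to isolate the singular part of $\nabla^{1-\varepsilon}H$, extract its exact leading order as $x\to\Gamma$ by a rescaling argument in Fermi coordinates, and then feed the resulting asymptotics into the formula for $\Delta\alpha_\varepsilon(H)$.

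First I would discard the regular part: since $h_\varepsilon\in C^\infty$ and $\Gamma$ is compact, $x\mapsto\int_\Gamma h_\varepsilon(x-y)\,d\mathcal{H}^1(y)$ belongs to $C^\infty_\pi(\Qqq)$ and contributes only bounded terms to every weighted norm below; and $\nabla^{1-\varepsilon}H\in C^\infty_\pi(\QqqG)$ is already known, so all four claims concern the behaviour near $\Gamma$ only. It therefore suffices to analyse
$$
 I(x):=c_\varepsilon\int_\Gamma\frac{d\mathcal{H}^1(y)}{|x-y|^{2-\varepsilon}},\qquad x\in\QqqG,
$$
inside a fixed tubular neighbourhood $\sU$. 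For $x\in\sU$ let $y_0\in\Gamma$ be its nearest point and $s\mapsto y(s)$ an arclength parametrisation of $\Gamma$ with $y(0)=y_0$; the $C^3$ regularity of $\Gamma$ gives $|x-y(s)|^2=\d(x)^2+s^2+O(|s|^3)$ uniformly for $|s|$ small, the far part of $\Gamma$ producing a smooth bounded contribution. The substitution $s=\d(x)\,t$ then yields
$$
 I(x)=B\,\d(x)^{\varepsilon-1}\bigl(1+o(1)\bigr),\qquad B:=c_\varepsilon\int_{\R}(1+t^2)^{-(1-\varepsilon/2)}\,dt\in(0,\infty),
$$
the integral being finite precisely because $\varepsilon<1$. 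Differentiating under the integral sign (legitimate since $x$ keeps positive distance to $\Gamma$) and using, in $\R^2$, $\nabla_x|x-y|^{-(2-\varepsilon)}=-(2-\varepsilon)(x-y)|x-y|^{-(4-\varepsilon)}$ and $\Delta_x|x-y|^{-(2-\varepsilon)}=(2-\varepsilon)^2|x-y|^{-(4-\varepsilon)}$, the same rescaling together with the reduction identity $\int_{\R}(1+t^2)^{-a-1}\,dt=\tfrac{2a-1}{2a}\int_{\R}(1+t^2)^{-a}\,dt$ produces
$$
 |\nabla I|=(1-\varepsilon)B\,\d^{\varepsilon-2}\bigl(1+o(1)\bigr),\quad \Delta I=(2-\varepsilon)(1-\varepsilon)B\,\d^{\varepsilon-3}\bigl(1+o(1)\bigr),\quad |\partial_i\partial_j I|=O(\d^{\varepsilon-3}),
$$
the tangential derivative of $I$ being of lower order $O(\d^{\varepsilon-1})$ by the odd symmetry of the leading kernel. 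Since $I>0$ and $\Delta I>0$ near $\Gamma$, these bounds give at once $\nabla^{1-\varepsilon}H\in BC^{2,1-\varepsilon}_\pi(\QqqG)$ and the three stated equivalences for $\nabla^{1-\varepsilon}H$, $\nabla\nabla^{1-\varepsilon}H$ and $\Delta\nabla^{1-\varepsilon}H$.

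For the last estimate set $v:=\nabla^{1-\varepsilon}H$ and $\alpha:=\alpha_\varepsilon(H)=(1+v^2)^{-1}$, so that a direct computation gives
$$
 \Delta\alpha=-2\alpha^2\bigl[(1-4\alpha v^2)|\nabla v|^2+v\,\Delta v\bigr].
$$
Near $\Gamma$ one has $\alpha v^2=1-\alpha\to1$, hence $1-4\alpha v^2\to-3$; inserting $v\sim B\d^{\varepsilon-1}$, $|\nabla v|\sim(1-\varepsilon)B\d^{\varepsilon-2}$, $\Delta v\sim(2-\varepsilon)(1-\varepsilon)B\d^{\varepsilon-3}$, $\alpha^2\sim B^{-4}\d^{4-4\varepsilon}$ and invoking the cancellation
$$
 -3(1-\varepsilon)^2+(2-\varepsilon)(1-\varepsilon)=(1-\varepsilon)(2\varepsilon-1),
$$
one finds $\Delta\alpha_\varepsilon(H)=-2(1-\varepsilon)(2\varepsilon-1)B^{-2}\,\d^{-2\varepsilon}\bigl(1+o(1)\bigr)$. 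Since $1-\varepsilon>0$ and $\varepsilon\neq\tfrac12$, the prefactor is a nonzero constant of sign $\sg(1-2\varepsilon)$, whence $\sg(1-2\varepsilon)\,\d^{2\varepsilon}\Delta\alpha_\varepsilon(H)\to 2(1-\varepsilon)|2\varepsilon-1|B^{-2}>0$ as $x\to\Gamma$, and shrinking $\sU$ yields $\sg(1-2\varepsilon)\,\d^{2\varepsilon}\Delta\alpha_\varepsilon(H)\sim{\bf1}$.

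I expect the decisive point to be this last step: crude estimates only give $\d^{2\varepsilon}\Delta\alpha_\varepsilon(H)=O(1)$, and it is the sharp leading-order expansion — in particular the algebraic cancellation displayed above, which is exactly what generates the factor $\sg(1-2\varepsilon)$ — that simultaneously pins down the sign and furnishes the lower bound. Making the error $o(1)$ uniform in $x$ also requires care: one first shrinks the tubular neighbourhood to absorb the $C^3$ curvature corrections in $|x-y(s)|^2$ and the contribution of the part of $\Gamma$ far from the base point, and only then lets $\d(x)\to0$. For this the hypothesis $\varepsilon\neq\tfrac12$ is essential, since it keeps $(1-\varepsilon)(2\varepsilon-1)$ bounded away from zero.
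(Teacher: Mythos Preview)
Your proof is correct and follows essentially the same route as the paper's: isolate the singular kernel, pass to normal--arclength coordinates near $\Gamma$, rescale by $\d(x)$, and then verify the cancellation that produces the factor $(1-2\varepsilon)$ in $\Delta\alpha_\varepsilon(H)$. The only noteworthy difference is presentational: you carry the exact leading coefficients $B$, $(1-\varepsilon)B$, $(2-\varepsilon)(1-\varepsilon)B$ through the computation (via the recursion $\int_{\R}(1+t^2)^{-a-1}\,dt=\tfrac{2a-1}{2a}\int_{\R}(1+t^2)^{-a}\,dt$) and read off the sign from the algebraic identity $-3(1-\varepsilon)^2+(2-\varepsilon)(1-\varepsilon)=(1-\varepsilon)(2\varepsilon-1)$, whereas the paper keeps two-sided $\sim$ bounds and at the end evaluates $\tfrac{3}{2}B(\tfrac12,\tfrac{3-\varepsilon}{2})-\tfrac12 B(\tfrac12,\tfrac{1-\varepsilon}{2})$ to show it vanishes iff $\varepsilon=\tfrac12$; these are the same identity.
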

\begin{proof}
Without loss of generality, we may assume that $\Omega$ is simply
connected. More complicated situation can be treated similarly.

(i) Let $\displaystyle I(x):=\int_\Gamma \frac{1}{|x-y|^{2-\varepsilon}}\,
d\mathcal{H}^1(y) $. To estimate $I(x)$ for those $x$ inside a
$\delta$-tubular neighborhood $\sU$ of $\Gamma$, we
first note that there exists a diffeomorphism 
$$
 \Lambda: \sU\to \Gamma\times (-\delta, \delta):\quad x\mapsto
 (\Pi(x), d_\Gamma(x)),
$$
where $\Pi(x)$ is the metric projection of $x$ onto $\Gamma$ and
$d_\Gamma(x)$ is the signed distance from $x$ to
$\Gamma$. $d_\Gamma(x)<0$ if $x$ is in the interior of $\Gamma$.
$$
 \Lambda^{-1}: \Gamma\times (-\delta,\delta)\to \sU:\quad (\p,s)
 \mapsto \p+s\nu_\Gamma(\p),
$$
where $\nu_\Gamma$ denotes the outer normal of $\Gamma$. $\Lambda$ and
$\Lambda^{-1}$ are $C^2$-continuous. For every $x\in \sU$, we pick a
coordinate chart, $\O_x$, around $\Pi(x)$ and chart maps
$\psi_x,\varphi_x$ such that 
$$ 
 \psi_x: \O_x\to (-1,1)\quad \text{with}\quad \varphi_x=\psi_x^{-1}
 \quad \text{and}\quad \psi_x(\Pi(x))=0.
$$
Moreover, $\varphi^*_x g_N|_\Gamma\sim g_1$, the one dimensional Euclidean metric, uniformly in $x$.

To estimate $I(x)$ for $x\in \sU$, first notice that
$$
 \int_{\O_x} \frac{1}{|x-y|^{2-\varepsilon}}\, d\mathcal{H}^1(y)\sim
 \int_{-1}^1 \frac{1}{(y^2+z^2)^{\frac{2-\varepsilon}{2}}}\, dy,
$$
where $z=d_\Gamma(x)$. The Lipschitz constant in this equivalence is
independent of $x$. Without loss of generality, we assume that $z>0$
and $\delta<1$; then
\begin{align*}
 \int_{-1}^1 \frac{dy}{(y^2+z^2)^{\frac{2-\varepsilon}{2}}}
  &=\frac{1}{z^{1-\varepsilon}} \int_{-1/z}^{1/z}
    \frac{dy}{(1+y^2)^{\frac{2-\varepsilon}{2}}}\\ 
 &=\frac{2}{z^{1-\varepsilon}}( \int_0^1
  \frac{dy}{(1+y^2)^{\frac{2-\varepsilon}{2}}}+ \int_1^{1/z}
  \frac{dy}{(1+y^2)^{\frac{2-\varepsilon}{2}}})\\ 
 &\sim \frac{1}{z^{1-\varepsilon}} 
\end{align*}
for $\delta$ sufficiently small. On the other hand, by choosing $\delta$
possibly even smaller, we can always make
$$
 \int_{\Gamma\setminus \O_x}\frac{1}{|x-y|^{2-\varepsilon}}\,
 d\mathcal{H}^1(y)<\frac{1}{2}\int_{\O_x}
 \frac{1}{|x-y|^{2-\varepsilon}}\, d\mathcal{H}^1(y) .
$$
(ii) To estimate $|\nabla I(x)|$, we first compute
$$
 \nabla I(x)= (\varepsilon-2) \int_\Gamma
 \frac{x-y}{|x-y|^{4-\varepsilon}}\, d\mathcal{H}^1(y) .
$$
By the above estimates, it is not hard to see that, in order to bound
$$
 \big |\int_{\O_x} \frac{x-y}{|x-y|^{4-\varepsilon}}\,
 d\mathcal{H}^1(y)\big |,
$$
it suffices to look at 
$$
 \int_{-1}^1 \frac{z\, dy}{(y^2+z^2)^{\frac{4-\varepsilon}{2}}}
 \text{ and }\int_{-1}^1 \frac{|y|\,
   dy}{(y^2+z^2)^{\frac{4-\varepsilon}{2}}}. 
$$
A similar computation as above yields
$$
 \int_{-1}^1 \frac{z\, dy}{(y^2+z^2)^{\frac{4-\varepsilon}{2}}},\:
 \int_{-1}^1 \frac{|y|\, dy}{(y^2+z^2)^{\frac{4-\varepsilon}{2}}}\sim
 \frac{1}{z^{2-\varepsilon}} .
$$
Again by choosing $\delta$ small enough, we can always make 
$$
 \big |\int_{\Gamma\setminus\O_x} \frac{x-y}{|x-y|^{4-\varepsilon}}\,
 d\mathcal{H}^1(y)\big |<\frac{1}{2}\big |\int_{\O_x}
 \frac{x-y}{|x-y|^{4-\varepsilon}}\, d\mathcal{H}^1(y)\big |.
$$
(iii) Since
$$
 \Delta \nabla^{1-\varepsilon}H(x)=(\varepsilon-2)^2 \int_\Gamma
 \frac{1}{|x-y|^{4-\varepsilon}}\, d\mathcal{H}^1(y) +\tilde{h}_\varepsilon(x), 
$$
where $\tilde{h}_\varepsilon\in C^\infty$,
the estimate for $\Delta \nabla^{1-\varepsilon}H$ follows in an
analogous way. Combining everything together, it is clear that
\begin{multline*}
 \d^{1-\varepsilon}(x)\nabla^{1-\varepsilon} H(x)\sim {\bf 1}
 ,\\\d^{2-\varepsilon}(x)|\nabla\nabla^{1-\varepsilon} H(x)|\sim
 {\bf 1},\\\d^{3-\varepsilon}(x)|\Delta\nabla^{1-\varepsilon}
 H(x)|\sim {\bf 1}
\end{multline*}
hold for all $x\in \sU$. The fact that $\nabla^{1-\varepsilon} H\in
BC^{2,1-\varepsilon}_\pi(\QqqG)$ follows from these estimates and the
definition of weighted $BC^k$-spaces.

(iv) As in Lemma~\ref{S3.1: 2nd der of alpha}, direct computations
show that
\begin{multline*}
\Delta\alpha _\varepsilon(H)=
   \alpha
  _\varepsilon(H)^3
  |\nabla^{1-\varepsilon} H|^2
  \big [6|\nabla \nabla^{1-\varepsilon} H|^2    
  -2\nabla^{1-\varepsilon}H \Delta\nabla^{1-\varepsilon}H 
  \big] \\  
  -2\alpha
  _\varepsilon(H)^3\big [
  \nabla^{1-\varepsilon}H \Delta\nabla^{1-\varepsilon}H 
  +|\nabla \nabla^{1-\varepsilon} H|^2  \big] 
  .\qquad
\end{multline*}
Again as in Lemma~\ref{S3.1: 2nd der of alpha}, we only need to
estimate
\begin{align*}
&6|\nabla \nabla^{1-\varepsilon} H|^2    
  -2\nabla^{1-\varepsilon}H \Delta\nabla^{1-\varepsilon}H\\
   \sim &  \Big[ 3\big |\int_\Gamma
 \frac{x-y}{|x-y|^{4-\varepsilon}}\, d\mathcal{H}^1(y)\big |^2
 -\int_\Gamma
 \frac{1}{|x-y|^{4-\varepsilon}}\, d\mathcal{H}^1(y)\int_\Gamma
 \frac{1}{|x-y|^{2-\varepsilon}}\, d\mathcal{H}^1(y) \Big]
\end{align*}
To  estimate the  right hand side, as in (i)-(iii), it suffices to
look at $x\in \sU$ and $y\in \O_x$. 
We need a more precise estimate than those in (i)-(iii), i.e.
 \begin{multline*}
 3\,\big |\int_{\O_x}
 \frac{x-y}{|x-y|^{4-\varepsilon}}\, d\mathcal{H}^1(y)\big |^2
 -\int_{\O_x} \frac{1}{|x-y|^{4-\varepsilon}}\, d\mathcal{H}^1(y)
 \int_{\O_x} \frac{1}{|x-y|^{2-\varepsilon}}\, d\mathcal{H}^1(y) \\
 = 3\Bigl(\int_{-s}^s \frac{z\, dy}{(y^2+z^2)^{\frac{4-\varepsilon}{2}}}J(y)\Bigr)^2 
 + 3\Bigl(\int_{-s}^s \frac{y\, dy}{(y^2+z^2)^{\frac{4-\varepsilon}{2}}}J(y)\Bigr)^2 \\
 -\int_{-s}^s \frac{1}{(y^2+z^2)^{\frac{2-\varepsilon}{2}}}J(y)\, dy  
 \int_{-s}^s \frac{1}{(y^2+z^2)^{\frac{4-\varepsilon}{2}}}J(y)\, dy ,
 \end{multline*} 
where $J(y)\in (K(1-\mu), K(1+\mu))$ for some $K,\mu>0$. $\mu$ is independent of
$x$ and can be chosen arbitrarily small by first shrinking $\O_x$,
or equivalently $s$, and then  $\sU$. Therefore, for each $\mu_0$, by
shrinking $\O_x$ and $\sU$, we have that
 \begin{align*}
&3\big |\int_{\O_x}
 \frac{x-y}{|x-y|^{4-\varepsilon}}\, d\mathcal{H}^1(y)\big |^2
 -\int_{\O_x} \frac{1}{|x-y|^{4-\varepsilon}}\, d\mathcal{H}^1(y)
 \int_{\O_x} \frac{1}{|x-y|^{2-\varepsilon}}\, d\mathcal{H}^1(y) \\
 \leq & 2K_0 \Big\{ (3+\mu_0)\bigl(\int_0^s \frac{z\,
        dy}{(y^2+z^2)^{\frac{4-\varepsilon}{2}}}\bigr)^2
+ 2\mu(3+\mu_0)\bigl(\int_0^s \frac{y\,
        dy}{(y^2+z^2)^{\frac{4-\varepsilon}{2}}}\bigr)^2 \\ 
 &-\int_0^s \frac{dy}{(y^2+z^2)^{\frac{2-\varepsilon}{2}}}
 \int_0^s \frac{dy}{(y^2+z^2)^{\frac{4-\varepsilon}{2}}}
 \Big\}\\
 =&  \frac{2K_0}{z^{4-2\varepsilon}} \Big\{
    (3+\mu_0)\bigl(\int\limits_0^{\frac{s}{z}}
    \frac{dy}{(y^2+1)^{\frac{4-\varepsilon}{2}}}\bigr)^2 
 +2\mu(3+\mu_0)\bigl(\int\limits_0^{\frac{s}{z}}
    \frac{y\, dy}{(y^2+1)^{\frac{4-\varepsilon}{2}}}\bigr)^2 \\ 
 &-\int\limits_0^{\frac{s}{z}} \frac{dy}{(y^2+1)^{\frac{2-\varepsilon}{2}}}
 \int\limits_0^{\frac{s}{z}} \frac{dy}{(y^2+1)^{\frac{4-\varepsilon}{2}}}
 \Big\}\\
 \end{align*}
 and 
\begin{align*}
&3\big |\int_{\O_x}
 \frac{x-y}{|x-y|^{4-\varepsilon}}\, d\mathcal{H}^1(y)\big |^2
 -\int_{\O_x} \frac{1}{|x-y|^{4-\varepsilon}}\, d\mathcal{H}^1(y)
 \int_{\O_x} \frac{1}{|x-y|^{2-\varepsilon}}\, d\mathcal{H}^1(y) \\
 \geq & K_0 \Big\{ (3-\mu_0)\bigl(\int_{-s}^s
        \frac{z\, dy}{(y^2+z^2)^{\frac{4-\varepsilon}{2}}}\bigr)^2 
 -\int_{-s}^s \frac{dy}{(y^2+z^2)^{\frac{2-\varepsilon}{2}}}
 \int_{-s}^s \frac{dy}{(y^2+z^2)^{\frac{4-\varepsilon}{2}}}
 \Big\}\\
 =&  \frac{2K_0}{z^{4-2\varepsilon}} \Big\{
    (3-\mu_0)\bigl(\int\limits_0^{\frac{s}{z}}
    \frac{dy}{(y^2+1)^{\frac{4-\varepsilon}{2}}}\bigr)^2
 -\int\limits_0^{\frac{s}{z}} \frac{dy}{(y^2+1)^{\frac{2-\varepsilon}{2}}}
 \int\limits_0^{\frac{s}{z}} \frac{dy}{(y^2+1)^{\frac{4-\varepsilon}{2}}}
 \Big\}\\
 \end{align*} 
 for some $K_0>0$. 
Recall that $\mu$, and thus $2\mu(3+\mu_0)$, can be made arbitrarily
small, and note that once $\O_x$, i.e. $s$, is fixed, $s/z$ can be
made arbitrarily large by further shrinking $\sU$. Therefore, we have
 \begin{multline*}
 \d^{4-2\varepsilon} \Big[3\big |\int_{\O_x}
 \frac{x-y}{|x-y|^{4-\varepsilon}}\, d\mathcal{H}^1(y)\big |^2
 +\\-\int_{\O_x} \frac{1}{|x-y|^{4-\varepsilon}}\, d\mathcal{H}^1(y)
 \int_{\O_x} \frac{1}{|x-y|^{2-\varepsilon}}\, d\mathcal{H}^1(y)\Big]
 \sim {\bf 1}, 
 \end{multline*}
 as long as
 \begin{equation}
 \label{S3.2: Cond}
 3\int\limits_0^\infty \frac{dy}{(y^2+1)^{\frac{4-\varepsilon}{2}}}
 -\int\limits_0^\infty \frac{dy}{(y^2+1)^{\frac{2-\varepsilon}{2}}}
 \neq 0.
 \end{equation}
One verifies that
$$
3\int\limits_0^\infty \frac{dy}{(y^2+1)^{\frac{4-\varepsilon}{2}}}
 -\int\limits_0^\infty \frac{dy}{(y^2+1)^{\frac{2-\varepsilon}{2}}}
 =\frac{3}{2} B(\frac{1}{2}, \frac{3-\varepsilon}{2})  -\frac{1}{2}B(\frac{1}{2}, \frac{1-\varepsilon}{2}),
$$
where $B(p,q)=\int_0^1 x^{p-1} (1-x)^{q-1}\, dx$ is the Beta function. The right hand side equals zero iff
$\varepsilon = 1/2$.
Thus, we conclude that for all $\varepsilon\neq 1/2$ 
 $$
  \sg(1-2\varepsilon)\d^{2\varepsilon}\Delta \alpha_\varepsilon(H) \sim {\bf 1}
 $$
 in a sufficiently small $\delta$-tubular neighborhood $\sU$ of $\Gamma$.
\end{proof}

Recall that $\R_\Gamma$ denotes the set of all functions that are constants in
each connected component of $\QqqG$. Now, combining Lemma~\ref{S3.2:
  frac grad-2D}, Proposition~\ref{S3.2: frac der of H-2D}, and an
argument analogous to the one used in the one dimensional case, we
obtain the following proposition.
\begin{prop}
Let $1<p<\infty$ and $\varepsilon$ satisfy \eqref{S3: ASP: vartheta & p} or 
\eqref{S3: ASP: vartheta & p-2}. Then, for each $w\in E_{\frac{1}{p}}$, the operator
$$
\sA(w)\in \mathcal{MR}_p(W^{2,\vartheta}_{p,\pi}(\QqqG),L^{\vartheta+2\varepsilon}_{p,\pi}(\QqqG) ).
$$
\end{prop}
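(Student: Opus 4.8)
The plan is to transcribe the proof of Proposition~\ref{S3.1: MR-thm} line by line, replacing every one dimensional estimate on $\partial^{1-\varepsilon}H$ and $\alpha_\varepsilon(H)$ by its two dimensional counterpart furnished by Lemma~\ref{S3.2: frac grad-2D} and Proposition~\ref{S3.2: frac der of H-2D}. First I would record the two dimensional analogues of Lemmas~\ref{S3.1: unif bd of alpha}--\ref{S3.1: 2nd der of alpha}, \ref{S3.1: alpha BC1} and \ref{S3.1: approx H to H+w}: namely that $\alpha_\varepsilon(H)\in BC^{2,2\varepsilon-2}_\pi(\QqqG)$, that $\alpha_\varepsilon(H+w)\in BC^{1,2\varepsilon-2}_\pi(\QqqG)$ with $[w\mapsto\alpha_\varepsilon(H+w)]$ real analytic on each ball $B_R\subset E_{\frac{1}{p}}$, that $1/C<\d^{2\varepsilon-2}\alpha_\varepsilon(H+w)<C$ uniformly on $B_R$, that $|\d^{2\varepsilon-1}\nabla\alpha_\varepsilon(H+w)|\sim{\bf 1}$ and $\sg(1-2\varepsilon)\d^{2\varepsilon}\Delta\alpha_\varepsilon(H)\sim{\bf 1}$ near $\Gamma$, and that on a sufficiently small $\delta_k$-tubular neighborhood $\sUk$ one has $\|\alpha_\varepsilon(H)-\alpha_\varepsilon(H+w)\|_{BC^{1,2\varepsilon-2}(\bsUGk)}\le 1/k$. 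Each of these follows from the arguments of Section~3.1 once Lemma~\ref{S3.2: frac grad-2D}, Proposition~\ref{S3.2: frac der of H-2D} and the pointwise multiplication and weight-change results (Lemmas~\ref{S2: pointwise mul}, \ref{S2: change of wgt}) are in hand; the only genuinely new ingredient is the Beta-function identity \eqref{S3.2: Cond}, which is precisely what yields $\sg(1-2\varepsilon)\d^{2\varepsilon}\Delta\alpha_\varepsilon(H)\sim{\bf 1}$ for $\varepsilon\ne 1/2$ in dimension two.

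Second I would check that on a small $\delta$-tubular neighborhood, $(\bsUG,dx)$ carries, with singularity function $\alpha_\varepsilon(H)^{\frac{1}{2-2\varepsilon}}$, the structure of a $C^2$-singular manifold satisfying property $\sH_{2\varepsilon}$ in the sense of \cite[Section~5.1]{Shao15}. Exactly as in step~(i) of the proof of Proposition~\ref{S3.1: MR-thm}, one puts $h=\sg(1-2\varepsilon)\log\alpha_\varepsilon(H)$ and verifies conditions $(\sH_{2\varepsilon}1)$ and $(\sH_{2\varepsilon}2)$: from $\d\,\alpha_\varepsilon(H)^{\frac{1}{2-2\varepsilon}}\sim{\bf 1}$, $|\d^{2\varepsilon-1}\nabla\alpha_\varepsilon(H)|\sim{\bf 1}$ and $\sg(1-2\varepsilon)\d^{2\varepsilon}\Delta\alpha_\varepsilon(H)\sim{\bf 1}$ one obtains $|\alpha_\varepsilon(H)^{\frac{1}{2-2\varepsilon}}\nabla h|\sim{\bf 1}$ and $\alpha_\varepsilon(H)^{\frac{2\varepsilon}{2-2\varepsilon}}\div(\alpha_\varepsilon(H)\nabla h)\sim{\bf 1}$. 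By \cite[Theorem~1.6]{Ama14}, $(\bsUG,dx)$ is then a singular manifold, and by the proof of \cite[Theorem~5.18]{Shao15} the operator $-\sA(0)$ generates a positive analytic $C_0$-semigroup on $L^{2\varepsilon+\vartheta}_p(\bsUG)$ with domain $\mathring{W}^{2,\vartheta}_p(\bsUG)$, positivity following as in step~(iii) of the proof of \cite[Theorem~4.8]{Shao1502}. Following steps~(iv) and~(4.14) there, $\sA(0)\in\RS(X_0(\delta))$ with $\phi^R_{\sA(0)}<\pi/2$, where $X_0(\delta)=L^{2\varepsilon+\vartheta}_p(\bsUG)$ and $X_1(\delta)=\mathring{W}^{2,\vartheta}_p(\bsUG)$, and both $\mathscr{R}\{\mu(\mu+\sA(0))^{-1}:\mu\in\Sigma_\theta\}$ and $\|\sA(0)^{-1}\|_{\L(X_0(\delta),X_1(\delta))}$ are nondecreasing in $\delta$.

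Third, Lemma~\ref{S2: nabla} together with the two dimensional version of Lemma~\ref{S3.1: approx H to H+w} makes $\|(\sA(w)-\sA(0))\sA^{-1}(0)\|_{\L(X_0(\delta))}$ arbitrarily small by shrinking $\delta$, so the perturbation theorem for $\mathscr{R}$-sectorial operators \cite[Proposition~4.2]{DenHiePru03} gives $\sA(w)\in\RS(X_0(\delta))$ with $\phi^R_{\sA(w)}<\pi/2$; the standard localization-and-gluing procedure of steps~(v)--(vii) of the proof of \cite[Theorem~4.8]{Shao1502} then produces $\omega\ge 0$ with $\omega+\sA(w)\in\RS(L^{2\varepsilon+\vartheta}_{p,\pi}(\QqqG))$ and $\phi^R_{\sA(w)}<\pi/2$, and the maximal regularity assertion follows from \cite[Theorem~4.4]{DenHiePru03}. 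The main obstacle is the verification of property $\sH_{2\varepsilon}$ along the curve $\Gamma=\partial\Omega$: unlike the one dimensional case, where $\Gamma$ is a pair of points, here the kernel integrals $\int_\Gamma|x-y|^{-(2-\varepsilon)}\,d\mathcal{H}^1(y)$ and their derivatives must be controlled by two-sided bounds that are uniform as the base point runs along $\Gamma$, and the sharp estimates collected in Proposition~\ref{S3.2: frac der of H-2D} — in particular the non-vanishing condition \eqref{S3.2: Cond} — are exactly what supplies them. Once these are granted, the remainder of the argument is a verbatim repetition of the one dimensional proof.
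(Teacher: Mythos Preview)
Your proposal is correct and follows essentially the same route as the paper's own proof: set $h=\sg(1-2\varepsilon)\log\alpha_\varepsilon(H)$, verify conditions $(\sH_{2\varepsilon}1)$ and $(\sH_{2\varepsilon}2)$ via the two dimensional estimates of Proposition~\ref{S3.2: frac der of H-2D}, and then repeat verbatim the perturbation and localization-gluing steps of Proposition~\ref{S3.1: MR-thm}. The paper's proof is simply a terser version of what you wrote, recording only the two key computations $|\alpha_\varepsilon(H)^{\frac{1}{2-2\varepsilon}}\nabla h|\sim{\bf 1}$ and $\alpha_\varepsilon(H)^{\frac{2\varepsilon}{2-2\varepsilon}}\div(\alpha_\varepsilon(H)\nabla h)\sim{\bf 1}$ near $\Gamma$ and deferring everything else to the one dimensional argument.
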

\begin{proof}
As in the proof for Proposition~\ref{S3.1: MR-thm}, we put $\displaystyle h=\sg(1-2\varepsilon)\log \alpha_\varepsilon(H)$.
Easy computations show
\begin{align*}
|\alpha_\varepsilon(H)^{\frac{1}{2-2\varepsilon}}\nabla h|
\sim  |\d^{2\varepsilon-1} \frac{d}{dx} \alpha_\varepsilon(H)|\sim {\bf 1},
\end{align*}
and 
\begin{align*}
\alpha_\varepsilon(H)^{\frac{2\varepsilon}{2-2\varepsilon}}
  \div (\alpha_\varepsilon(H) \nabla h)\sim
   \sg(1-2\varepsilon)\d^{2\varepsilon} \Delta
      \alpha_\varepsilon(H) \sim {\bf 1}. 
\end{align*}
near $\Gamma$. Then the rest of the proof follows in the same way as that for Proposition~\ref{S3.1: MR-thm}.
\end{proof}
The following theorem concerning the local
wellposedness of equation \eqref{P-M eq} in two space dimensions follows.
\begin{theorem}\label{S3.2: PM-thm-2D}
Assume that one of the following conditions holds
\begin{itemize}
\item[] $\varepsilon\in (0,\frac{1}{2})\cup (\frac{1}{2},1)$, $\vartheta\leq -2$ and
  $\displaystyle p>\max\{ \frac{4}{\varepsilon} ,
  -\frac{2+2\varepsilon}{\vartheta}\}$ or
\item[] $\varepsilon\in (1-\frac{1}{2p}, 1)$,
  $\vartheta=-2\varepsilon$, and $\displaystyle p>\max\{
  \frac{6}{\varepsilon}, \frac{13}{2}\}.$ 
\end{itemize}
Suppose that $H$ is a linear combination of characteristic functions of
disjoint $C^3$-domains $\Omega_i$ in $\mathring{\mathsf{Q}}^2$. 
Let $\Gamma=\cup_i \partial \Omega_i $. Given any $u_0=H+w_0 $ with 
$$
 w_0\in W^{2-2/p,\vartheta+\frac{2\varepsilon}{p}}_{p,\pi}(\QqqG),
$$
equation~\eqref{P-M eq} has a unique solution
$$
 u \in  L_p(J, W^{2,\vartheta}_{p,\pi}(\QqqG)) \cap H^1_p(J,
 L^{\vartheta+2\varepsilon}_{p,\pi}(\QqqG))\oplus \R_\Gamma .
$$
for some $J:=[0,T]$ with $T=T(u_0)>0$. Moreover,
$$
u\in C(J,
W^{2-2/p,\vartheta+\frac{2\varepsilon}{p}}_{p,\pi}(\QqqG))\oplus
\R_\Gamma.
$$
\end{theorem}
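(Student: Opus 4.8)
The plan is to follow verbatim the scheme that produced Theorem~\ref{S3.1: PM-thm-ID}, now with $N=2$ on the singular manifold $(\QqqG,g_2)$. First I would check that the two alternative sets of hypotheses on $(\varepsilon,\vartheta,p)$ are exactly the specialisations to $N=2$ of the abstract conditions \eqref{S3: ASP: vartheta & p} and \eqref{S3: ASP: vartheta & p-2}: the bound $p>4/\varepsilon$ is $p>(N+2)/\varepsilon$, $p>-(2+2\varepsilon)/\vartheta$ is $p>-(N+2\varepsilon)/\vartheta$, while in the second regime $p>6/\varepsilon$ is $p>(2N+2)/\varepsilon$ and $p>13/2$ is $p>(4N+5)/2$, and $\varepsilon\in(0,\frac12)\cup(\frac12,1)$ (resp.\ $\varepsilon\in(1-\frac1{2p},1)$) is the condition $\varepsilon\neq\frac12$ (resp.\ $\varepsilon>1-\frac1{2p}$) appearing in those displays. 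Writing $H=\sum_i c_i\chi_{\Omega_i}$ with the $\Omega_i$ disjoint $C^3$-domains, the set $\Gamma=\bigcup_i\partial\Omega_i$ is a $C^3$-submanifold of $\mathring{\mathsf{Q}}^2$ of codimension one, so the tubular-neighbourhood construction of Section~2.1 applies and $\d$ in \eqref{S2.1: sing func} is well defined; accordingly one works in $E_0=L^{\vartheta+2\varepsilon}_{p,\pi}(\QqqG)$, $E_1=W^{2,\vartheta}_{p,\pi}(\QqqG)$ and $E_{\frac1p}=W^{2-2/p,\vartheta+\frac{2\varepsilon}{p}}_{p,\pi}(\QqqG)$.

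Next I would assemble the two-dimensional counterparts of the one-dimensional lemmas. Lemma~\ref{S3.2: frac grad-2D} gives $\nabla^{1-\varepsilon}\in\L(E_{\frac1p},BC^1_\pi(\QqqG))$, hence $|\nabla^{1-\varepsilon}w|$ is uniformly bounded on each ball $B_R$; Proposition~\ref{S3.2: frac der of H-2D} supplies the weighted equivalences $\d^{1-\varepsilon}\nabla^{1-\varepsilon}H\sim{\bf 1}$, $\d^{2-\varepsilon}|\nabla\nabla^{1-\varepsilon}H|\sim{\bf 1}$, $\d^{3-\varepsilon}|\Delta\nabla^{1-\varepsilon}H|\sim{\bf 1}$ and $\sg(1-2\varepsilon)\d^{2\varepsilon}\Delta\alpha_\varepsilon(H)\sim{\bf 1}$ near $\Gamma$. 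Repeating the arguments of Lemmas~\ref{S3.1: unif bd of alpha}--\ref{S3.1: alpha BC1} with $\nabla$, $\div$, $\Delta$ in place of the one-dimensional derivatives then yields $\alpha_\varepsilon(H)\in BC^{2,2\varepsilon-2}_\pi(\QqqG)$, $\alpha_\varepsilon(H+w)\in BC^{1,2\varepsilon-2}_\pi(\QqqG)$ for every $w\in E_{\frac1p}$, and the real-analytic dependence
\[
 [w\mapsto\alpha_\varepsilon(H+w)]\in C^\omega\bigl(B_R,BC^{1,2\varepsilon-2}_\pi(\QqqG)\bigr),
\]
using Lemma~\ref{S2: pointwise mul}, the identification $BC^{1,0}_\pi(\QqqG)\doteq\phi^*BC^1(\hat{\M})$ on the bounded-geometry manifold $(\hat{\M},\hat{g})=(\M,g/\rho^2)$, and Lemma~\ref{S2: change of wgt}. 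Combined with Lemma~\ref{S2: nabla} and the (unnumbered) maximal-regularity proposition preceding the theorem — the $N=2$ analogue of Proposition~\ref{S3.1: MR-thm} — this gives $\bigl[w\mapsto\div(\alpha_\varepsilon(H+w)\nabla\cdot)\bigr]\in C^\omega\bigl(B_R,\mathcal{MR}_p(E_1,E_0)\bigr)$.

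With this in hand I would run the linearisation/fixed-datum argument exactly as in one dimension. The constant $u^*\equiv H$ solves the problem linearised in the initial datum $H$; writing $u=\tilde u+u^*$ reduces \eqref{P-M eq} with datum $u_0=H+w_0$, $w_0\in B_R$, to a Cauchy problem for $\tilde u$ with operator $\sA(\tilde u)=-\div(\alpha_\varepsilon(H+\tilde u)\nabla\cdot)$ and initial value $w_0$, to which Theorem~\ref{S2.2: Thm-MR} applies: it produces, for some $J=[0,T]$ with $T=T(u_0)>0$, a unique $\tilde u\in\bE_1(J)=L_p(J,E_1)\cap H^1_p(J,E_0)$, whence $\hat u=\tilde u+u^*$ solves \eqref{P-M eq}. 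For uniqueness in $\bE_1(J)\oplus\R_\Gamma$ I would use $\bE_1(J)\hookrightarrow C(J,E_{\frac1p})$ together with $\R_\Gamma\cap E_{\frac1p}=\{{\bf 0}\}$: by Proposition~\ref{S2: Sobolev embedding} (valid since $p>4$), $E_{\frac1p}\hookrightarrow BC^{1,\vartheta+\frac{2\varepsilon+2}{p}}_\pi(\QqqG)$, and under either parameter regime the weight $\vartheta+\frac{2\varepsilon+2}{p}$ is strictly negative, which forces every element of $E_{\frac1p}$ to tend to $0$ at $\Gamma$; hence a function constant on each component of $\QqqG$ that lies in $E_{\frac1p}$ must vanish. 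Therefore the decomposition $u=u_1+u_2$ with $u_1\in\bE_1(J)$, $u_2\in\R_\Gamma$ is unique, $u(0)=u_1(0)+u_2=w_0+H$ forces $u_2=H$, and uniqueness for the $\tilde u$-problem gives $u_1=\tilde u$. The final assertion $u\in C(J,E_{\frac1p})\oplus\R_\Gamma$ is then immediate from $\bE_1(J)\hookrightarrow C(J,E_{\frac1p})$.

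The genuinely hard part of this circle of ideas — the oscillatory-integral estimates behind Proposition~\ref{S3.2: frac der of H-2D}, in particular the non-vanishing of the Beta-function combination that excludes $\varepsilon=\frac12$, and the singular-manifold structure underlying the $N=2$ maximal-regularity proposition — has already been carried out. Within the proof of the theorem itself, the only delicate point is the two-dimensional bookkeeping: verifying that the stated inequalities on $(\varepsilon,\vartheta,p)$ really do imply \eqref{S3: ASP: vartheta & p} or \eqref{S3: ASP: vartheta & p-2}, that $p>4$ so the embedding $E_{\frac1p}\hookrightarrow BC^1_\pi(\QqqG)$ holds, and that $\vartheta+\frac{2\varepsilon+2}{p}<0$, which is precisely what makes the singular decomposition in $\bE_1(J)\oplus\R_\Gamma$ unique.
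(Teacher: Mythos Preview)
Your proposal is correct and follows essentially the same approach as the paper. The paper itself gives no separate proof for this theorem beyond stating that it ``follows'' from the preceding two-dimensional maximal-regularity proposition together with Lemma~\ref{S3.2: frac grad-2D} and Proposition~\ref{S3.2: frac der of H-2D}, ``and an argument analogous to the one used in the one dimensional case''; you have correctly identified and spelled out that argument, including the parameter bookkeeping and the uniqueness mechanism via $\R_\Gamma\cap E_{\frac1p}=\{{\bf 0}\}$.
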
 

\section{\bf Global existence}

In this section, we focus on  the case \eqref{S3: ASP: vartheta & p-2}
\begin{equation*}
\vartheta=-2\varepsilon,\: p>\max\{ \frac{2N+2}{\varepsilon},
\frac{4N+5}{2}\},\: \varepsilon>1-\frac{1}{2p}
\end{equation*}
and prove global existence of the solutions to \eqref{P-M eq} to
initial data close enough to an equilibrium. 
Note that \eqref{S3: ASP: vartheta & p-2} implies the necessary 
condition $\varepsilon>1/2$ in the sequel, 
and this is why only \eqref{S3: ASP: vartheta & p-2} 
is considered in this section.

In \cite[Proposition~6]{Gui09}, the first author proves that
characteristic functions of smooth domains $\Omega$ are stationary
solutions for \eqref{P-M eq}. While in that article, the
submanifold $\Gamma=\partial\Omega$ is required to be smooth, lower
regularity, e.g. $C^3$-regularity, suffices.
\begin{prop}
Linear combinations of characteristic functions of disjoint
$C^3$-domains $\Omega_i$ in $\mathring{\sf Q}^N$ are stationary
solutions to \eqref{P-M eq}. 
\end{prop}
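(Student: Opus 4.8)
\emph{Proof strategy.} The notion of stationary solution in play is the one of \cite[Proposition~6]{Gui09}: a $t$-independent function solving \eqref{P-M eq} in the sense of distributions, equivalently $\partial_t u-\div_g(\alpha_\varepsilon(u)\nabla_g u)=0$ in $\mathcal{D}'(\TN)$. Taking $u\equiv H$ the time derivative vanishes identically, so the whole statement reduces to
\[
 \div\bigl(\alpha_\varepsilon(H)\,\nabla H\bigr)=0\quad\text{in }\mathcal{D}'(\mathring{\mathsf{Q}}^N),
\]
and for this it suffices to show that the vector-valued Radon measure $\alpha_\varepsilon(H)\,\nabla H$ is identically zero. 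Since $H\in\R_\Gamma$, this will in particular exhibit $H$ as an element of the solution class $\bE_1(J)\oplus\R_\Gamma$ of Theorems~\ref{S3.1: PM-thm-ID} and \ref{S3.2: PM-thm-2D} and, by the uniqueness part of those theorems, as the solution emanating from $u_0=H$.

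First I would record the structure of $\nabla H$. Writing $H=\sum_i c_i\chi_{\Omega_i}$ with the $\bar\Omega_i$ pairwise disjoint and all $c_i\neq 0$ (terms with $c_i=0$ being irrelevant), $H$ is of bounded variation, $\nabla H$ is concentrated on $\Gamma=\cup_i\partial\Omega_i$, and $\langle\nabla H,\Phi\rangle=\sum_i c_i\int_{\partial\Omega_i}\Phi\cdot\nu_{\Omega_i}\,d\mathcal{H}^{N-1}$ for $\Phi\in C^\infty_c(\mathring{\mathsf{Q}}^N,\R^N)$, while $|\nabla H|=\sum_i|c_i|\,\|\partial\Omega_i\|$. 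Hence the kernel representations of $\nabla^{1-\varepsilon}$ (Lemma~\ref{S3.1: Sing ker of frac grad-ID} for $N=1$, Lemma~\ref{S3.2: fractional grad kernel} together with the computation preceding Proposition~\ref{S3.2: frac der of H-2D} for $N=2$) apply to $H$; moreover, near each $\partial\Omega_j$ the singular behaviour of $\nabla^{1-\varepsilon}H$ is governed by the $\partial\Omega_j$-contribution alone, the remaining terms being smooth and bounded near $\partial\Omega_j$. From \eqref{S3.1: est for H} and Proposition~\ref{S3.1: frac der of H} when $N=1$, and from Proposition~\ref{S3.2: frac der of H-2D} when $N=2$, one therefore gets $\d^{\,1-\varepsilon}|\nabla^{1-\varepsilon}H|\sim\mathbf{1}$ near $\Gamma$; since $\varepsilon<1$ this forces $|\nabla^{1-\varepsilon}H(x)|\to\infty$ as $x\to\Gamma$, so that $\alpha_\varepsilon(H)=(1+|\nabla^{1-\varepsilon}H|^2)^{-1}$ satisfies $\alpha_\varepsilon(H)\sim\d^{\,2-2\varepsilon}$ near $\Gamma$. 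In particular $\alpha_\varepsilon(H)$, being $C^2$ on $\mathring{\mathsf{Q}}^N\setminus\Gamma$ (cf. Lemma~\ref{S3.1: alpha BC1} and Proposition~\ref{S3.2: frac der of H-2D}), extends continuously to $\bar{\mathsf{Q}}^N$ with $\alpha_\varepsilon(H)\equiv 0$ on $\Gamma$.

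With these two facts the conclusion is immediate: the product of the continuous function $\alpha_\varepsilon(H)$ with the finite Radon measure $\nabla H$ is a well-defined Radon measure, and since $\nabla H$ is concentrated on $\Gamma$ while $\alpha_\varepsilon(H)$ vanishes $\mathcal{H}^{N-1}$-a.e.\ on $\Gamma$, one has $\langle\alpha_\varepsilon(H)\nabla H,\Phi\rangle=\sum_i c_i\int_{\partial\Omega_i}\alpha_\varepsilon(H)\,(\Phi\cdot\nu_{\Omega_i})\,d\mathcal{H}^{N-1}=0$ for every $\Phi$; thus $\alpha_\varepsilon(H)\nabla H=0$ and $\div(\alpha_\varepsilon(H)\nabla H)=0$. (Alternatively one can invoke \cite[Proposition~6]{Gui09} directly, observing that its proof only uses that $\partial\Omega$ is of class $C^3$, which is exactly what produces the $\d^{\,\varepsilon-1}$ blow-up of $\nabla^{1-\varepsilon}H$.) The main — indeed essentially the only — delicate point is this middle step: $\alpha_\varepsilon(H)$ is merely continuous (not even Lipschitz) across $\Gamma$, so the product $\alpha_\varepsilon(H)\nabla H$ must be interpreted with care, and one must know that the continuous extension of $\alpha_\varepsilon(H)$ genuinely vanishes on \emph{all} of $\Gamma$, not merely that it is small there; both are secured by the sharp two-sided estimate $|\nabla^{1-\varepsilon}H|\sim\d^{\,\varepsilon-1}$ near $\Gamma$. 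The reduction to separated components and the fact that $\nabla H$ carries no mass off $\Gamma$ are then routine.
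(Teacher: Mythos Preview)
Your argument is correct and matches the approach of \cite[Proposition~6]{Gui09}, which is exactly what the paper invokes: it does not give an independent proof but simply cites that reference, remarking that $C^3$-regularity of $\Gamma$ suffices in place of smoothness. Your write-up spells out the content of that citation in detail, and even notes this alternative explicitly.
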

We define 
$$
P:  W^{2,-2\varepsilon}_{p,\pi}(\QG)\to L_{p,\pi}(\QG): \quad 
 u\mapsto \div \bigl(\frac{1}{1+|\nabla^{1-\varepsilon}(H+u)|^2}\nabla
 u\bigr),
$$
where $H=\chi_\Omega$ for some $C^3$-domain $\Omega\subset
\mathring{\mathsf{Q}}^N$. 
The discussions in the previous section (cf. \eqref{S3.1: Reg-main}) show that
$$
P\in C^\omega(W^{2,-2\varepsilon}_{p,\pi}(\QG), L_{p,\pi}(\QG)).
$$ 
Let
$$
 \sA_{\alpha_\varepsilon} u:=\div(\alpha _\varepsilon\nabla u),\:
 \alpha _\varepsilon:=\frac{1}{1+|\nabla^{1-\varepsilon}H|^2}.
$$
Note that $\alpha _\varepsilon\sim \d^{2-2\varepsilon}$. Denote the
Fr\'echet derivative of $P$ at $0$ by $\partial P(0)$. Then an easy
computation shows that $\partial P(0)=\sA_{\alpha_\varepsilon}$.
Consider the following abstract linear equation.
\begin{equation}\label{S4: ALE}
\begin{cases}
 \partial_t u -\sA_{\alpha _\varepsilon} u=0  &\text{in
 }\Q\times(0,\infty),\\  
 u&\text{periodic,}\\
 u(0)=u_0  &\text{in }\Q .
\end{cases}
\end{equation}
We can associate with $\sA_{\alpha _\varepsilon}$ a form operator $\a$
with $D(\a)=\mathring{H}^1_{\alpha_\varepsilon,\pi}(\QG)$, defined by
\begin{align*}
\a(u,v)=\int_{\Q} \alpha _\varepsilon (\nabla u | \nabla v )\, dx,
\end{align*}
for $u,v\in D(\a)$. Here $\mathring{H}^1_{\alpha_\varepsilon,\pi}(\QG)$ is the closure
of $\mathcal{D}_\pi(\QG)$, where $\mathcal{D}_\pi(\QG)= \phi^*\mathcal{D}(\M)$, with respect to the
norm $\|\cdot\|_{\alpha_\varepsilon}$, 
$$ 
\|u\|_{\alpha_\varepsilon}=(\|u\|_2^2 +\|\sqrt{\alpha_\varepsilon}\nabla u\|_2^2)^{1/2}
$$
with $\|\cdot\|_2$ being the norm of $L_{2,\pi}(\QG)$.
\begin{lem}\label{S4: cpt embed Trace of H1 Poincare ineq}
\begin{itemize}
   \item[]
  \item[(i)] The embedding $D(\a)\hookrightarrow L_{2,\pi}(\QG)$ is compact. 
  \item[(ii)] Any function $u\in D(\a)$ admits a trace
    $\gamma_{\Gamma}(u)=0$ a.e. on $\Gamma$. 
  \item[(iii)] It holds that
$$
 \|u\|_2\leq C\| \sqrt{\alpha _\varepsilon}\,\nabla u\|_2,\:
 u\in D(\a),
$$
where $\|\cdot\|_2$ is the norm of $L_{2,\pi}(\QG)$. 
\end{itemize}
\end{lem}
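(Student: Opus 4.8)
The plan is to establish the three assertions in the order (i)--(ii) (which share the same one–dimensional device) and then (iii) (which follows from the first two by a standard compactness–contradiction argument). The common geometric input is that, $\Gamma$ being $C^3$, the tubular neighborhood $\sU$ carries Fermi coordinates $x\leftrightarrow(\p,s)$, $\p\in\Gamma$, $s\in(-\delta,\delta)$, in which $\d(x)\sim|s|$ and, by the remark $\alpha_\varepsilon\sim\d^{2-2\varepsilon}$ preceding the lemma, $\alpha_\varepsilon$ behaves like $|s|^{2-2\varepsilon}$. The only structural fact used below is $\varepsilon>1/2$ --- which is forced by \eqref{S3: ASP: vartheta & p-2}, as noted at the start of Section~4 --- so that $2-2\varepsilon<1$ and hence $\int_0^\delta s^{-2(1-\varepsilon)}\,ds<\infty$.

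\textbf{Parts (i) and (ii).} For $u$ smooth up to $\Gamma$ and $a\in(0,\delta)$ one has $u(\p,0^+)=u(\p,a)-\int_0^a\partial_s u(\p,s)\,ds$; averaging in $a$ over $(0,\delta)$ and integrating by parts gives the representation
$$
\gamma_\Gamma u(\p)=\frac{1}{\delta}\int_0^\delta u(\p,a)\,da-\frac{1}{\delta}\int_0^\delta(\delta-s)\,\partial_s u(\p,s)\,ds ,
$$
which I would take as the definition of $\gamma_\Gamma u$ for $u\in D(\a)$. Writing $|\partial_s u|=s^{-(1-\varepsilon)}\bigl(s^{1-\varepsilon}|\partial_s u|\bigr)$ and combining Cauchy--Schwarz with $\int_0^\delta s^{-2(1-\varepsilon)}\,ds<\infty$, the second term is bounded in $L_1(\Gamma)$ by $\|\sqrt{\alpha_\varepsilon}\nabla u\|_2$ (up to the bounded $C^2$ Jacobian of the Fermi chart) and the first by $\|u\|_2$, so $\gamma_\Gamma\in\L(D(\a),L_1(\Gamma))$; the two terms cancel whenever $u$ vanishes near $\Gamma$, hence $\gamma_\Gamma$ annihilates the dense subspace $\mathcal{D}_\pi(\QG)$ and therefore vanishes on all of $D(\a)$, giving (ii). Keeping instead the $L_2$-norm in $s$ and replacing the far-end value $u(\p,\delta)$ by an average over $s\in(\delta/2,\delta)$, the same computation yields a weighted Hardy inequality $\|u\|_{L_2(\mathscr{U}_\eta)}^2\le C_\delta\,\eta\,\|u\|_{\alpha_\varepsilon}^2$ for $0<\eta<\delta$. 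For (i) I would split $\QG=\sUG\cup(\QG\setminus\mathscr{U}_{\delta/2})$: on any fixed set compactly contained in $\QG$ the weight $\alpha_\varepsilon$ is bounded below, so a bounded sequence in $D(\a)$ is bounded in $H^1$ there and Rellich's theorem applies; the Hardy inequality makes the $\mathscr{U}_\eta$-tails uniformly small as $\eta\downarrow 0$, and a diagonal extraction over $\eta=1/m$ produces an $L_{2,\pi}(\QG)$-convergent subsequence.

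\textbf{Part (iii).} Assume the inequality fails; then there are $u_n\in D(\a)$ with $\|u_n\|_2=1$ and $\|\sqrt{\alpha_\varepsilon}\nabla u_n\|_2\to 0$. The sequence is bounded in the Hilbert space $D(\a)$, so after passing to a subsequence $u_n\rightharpoonup u$ weakly in $D(\a)$ and, by (i), $u_n\to u$ strongly in $L_{2,\pi}(\QG)$; thus $\|u\|_2=1$ while $\|\sqrt{\alpha_\varepsilon}\nabla u\|_2\le\liminf_n\|\sqrt{\alpha_\varepsilon}\nabla u_n\|_2=0$. Since $\alpha_\varepsilon>0$ on $\QG$ this forces $\nabla u=0$ a.e., so $u$ is constant on each of the finitely many connected components of $\QG$, each a $C^3$-domain with boundary in $\Gamma$; inserting such a constant into the representation of (ii) shows the constant equals $\gamma_\Gamma u$, which vanishes by (ii). Hence $u\equiv 0$, contradicting $\|u\|_2=1$, and (iii) follows.

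\textbf{Main difficulty.} The crux is the weighted Hardy/trace estimate near $\Gamma$: it is valid precisely because the degeneration of $\alpha_\varepsilon$ is subcritical, $2-2\varepsilon<1$, i.e. $\varepsilon>1/2$ --- exactly the extra restriction built into \eqref{S3: ASP: vartheta & p-2}, and the reason this lemma belongs to the global-existence section. A secondary, purely technical point is that the Fermi change of variables is only $C^2$, so one must check it preserves $\d\sim|s|$, $\alpha_\varepsilon\sim\d^{2-2\varepsilon}$ and the two-sided bounds on the Jacobian uniformly along $\Gamma$; this is guaranteed by the $C^3$-regularity of $\Gamma$ together with Proposition~\ref{S3.2: frac der of H-2D} (and Proposition~\ref{S3.1: frac der of H} with \eqref{S3.1: est for H} when $N=1$).
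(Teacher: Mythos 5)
Your proof is correct, but it follows a genuinely different route from the paper's. The paper makes a single reduction to unweighted Sobolev spaces: since $\alpha_\varepsilon\sim \d^{2-2\varepsilon}$ and $\varepsilon>1/2$, the reciprocal $\alpha_\varepsilon^{-q}$ is integrable for some $q>1$, and a H\"older argument gives $D(\a)\hookrightarrow W^1_{1+s,\pi}(\QG)$ for small $s>0$; then (i) is the Rellich--Kondrachov theorem (using $2<(1+s)^*$ for $N\le 2$), (ii) is the classical trace theorem on each $C^3$ component plus density of $\mathcal{D}_\pi(\QG)$, and (iii) is the zero-trace Poincar\'e inequality in $W^1_{1+s}$ combined with the same H\"older bound, so the Poincar\'e constant is obtained directly rather than by contradiction. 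You instead work in Fermi coordinates near $\Gamma$ and exploit the one-dimensional integrability $\int_0^\delta s^{-2(1-\varepsilon)}\,ds<\infty$ (the same use of $\varepsilon>1/2$, just localized): an averaged fundamental-theorem representation defines a trace functional continuous into $L_1(\Gamma)$ that vanishes by density, a collar (Hardy-type) estimate $\|u\|_{L_2(\mathscr{U}_\eta)}^2\le C_\delta\,\eta\,\|u\|_{\alpha_\varepsilon}^2$ plus interior Rellich and a diagonal extraction give compactness, and (iii) follows by the standard compactness--contradiction argument, using that the weak limit with vanishing weighted gradient is componentwise constant and that each component of $\QG$ has a boundary portion of positive surface measure in $\Gamma$, so its constant equals the (zero) trace. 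What each approach buys: the paper's is shorter, reuses off-the-shelf theorems, produces the honest $W^{1-\frac{1}{1+s}}_{1+s}$ trace, and gives (iii) constructively; yours is more self-contained and quantitative near the degeneration (the explicit collar estimate makes the mechanism $2-2\varepsilon<1$ transparent and is reusable), at the price of a trace landing only in $L_1(\Gamma)$ and a non-constructive Poincar\'e constant. The technical points you flag (only $C^2$ regularity of the Fermi chart, uniform Jacobian bounds, regularity of $\partial\mathscr{U}_\eta$ for Rellich) are handled correctly and do not constitute gaps.
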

\begin{proof}
(i) Since $\alpha _\varepsilon\sim \d^{2-2\varepsilon}$ and
$\varepsilon>1/2$, there exists an $q>1$ such that 
$$\int_{\QG} \frac{1}{\alpha _\varepsilon^q(x)}\, dx<\infty. $$
Then one has that $|\nabla u|\in W^1_{1+s,\pi}(\QG)$ for some small
enough $s>0$ since
\begin{multline}\label{S4: 1+s space}
 \int_{\QG} |\nabla u|^{1+s}\, dx \leq \int_{\QG}
 (\frac{\sqrt{\alpha_\varepsilon(x)}}{\sqrt{\alpha
  _\varepsilon(x)}})^{1+s}|\nabla u|^{1+s}\, dx\\ 
  \leq (\int_{\QG} \alpha _\varepsilon(x)^{-\frac{1+s}{1-s}}\,
  dx)^{\frac{1-s}{2}} (\int_{\QG} \alpha _\varepsilon(x) |\nabla
  u(x)|^2\, dx)^{\frac{1+s}{2}}<\infty 
\end{multline}
provided that $\frac{1+s}{1-s}<q$, which is always possible for a
small enough $s$. This shows that $u\in W^{1}_{1+s,\pi}(\QG)$. The
claim therefore follows from the compactness part of Sobolev embedding
theorem. This is obvious for $N=1$. For $N=2$, it follows observing
that $2<(1+s)^*= \frac{N(1+s)}{N-1-s}$ is valid as long as
$N<2\frac{1+s}{1-s}$. The latter is always the case for $N=2$.\\[0.1cm]
(ii) Inequality \eqref{S4: 1+s space} implies that on each
component $\Omega_i$ of $\QG$, 
$$
 D(\a)\hookrightarrow
 W^1_{1+s}(\Omega_i)
$$
for some $s>0$ small.
By the well known trace theorem,
$$
 \gamma_{\Gamma}\in \L\bigl(W^1_{1+s}(\Omega_i),
 W^{1-\frac{1}{1+s}}_{1+s}(\partial{\Omega}_i)\bigr).
$$
Therefore the trace operator is well-defined on $D(\a)$ and 
\begin{equation}\label{S4: cont of trace op}
\gamma_{\Gamma}\in \L\bigl(D(\a),
W^{1-\frac{1}{1+s}}_{1+s}(\partial{\Omega}_i)\bigr)
\end{equation} 
on each connected component of $\QG$.
By the density of $\mathcal{D}_\pi(\QG)$ in  $D(\a)$,
we can take a sequence $(u_k)_{k\in\N}\subset \mathcal{D}_\pi(\QG) $
converging to $u$ in $D(\a)$. Since $\gamma_{\Gamma}(u_k)=0$, we
conclude from \eqref{S4: cont of trace op} that 
$\gamma_{\Gamma}(u)=0$ as well.\\[0.1cm]
(iii) Given any $u\in D(\a)$, it follows from \eqref{S4: 1+s
    space} that $u\in W^1_{1+s,\pi}(\QG)$ with $s$ small enough, and
  by the trace lemma, we have $\gamma_\Gamma(u)=0$.
So we can apply the Poincar\'e inequality for $W^1_{1+s,\pi}(\QG)$ on
each connected component of $\QG$ to $u$, which yields 
$$
 \|u\|_{L_{1+s,\pi}(\QG)} \leq C\|\nabla u\|_{L_{1+s,\pi}(\QG)}.
$$
In view of the embedding $W^1_{1+s,\pi}(\QG) \hookrightarrow
L_{2,\pi}(\QG)$ and \eqref{S4: 1+s space}, it holds that
$$
 \|u\|_2 \leq C\|u \|_{W^1_{1+s,\pi}(\QG)}\leq C\|\nabla
 u\|_{L_{1+s,\pi}(\QG)}\leq C\| \sqrt{\alpha _\varepsilon}\,\nabla
 u\|_2.
$$
\end{proof}
\begin{prop}\label{S4: cont-L_2-coer}
$\a$ is continuous and $D(\a)$-coercive. More precisely,
\begin{itemize}
\item[(i)](Continuity) there exists some constant $C$ such that for
all $u,v\in D(\a)$ 
$$
 |\a(u,v)|\leq C\|u\|_{D(\a)} \|v\|_{D(\a)}.
$$
\item[(ii)]($D(\a)$-Coercivity) There is some $C$ such that for any
  $u\in D(\a)$ 
$$
 \Rp(\a(u,u)) \geq C \|u\|^2_{D(\a)}.
$$
\end{itemize}
\end{prop}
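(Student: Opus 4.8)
The plan is to establish continuity and coercivity directly from the definition of the form $\a$ and the norm $\|\cdot\|_{D(\a)} = \|\cdot\|_{\alpha_\varepsilon}$, using the Poincar\'e-type inequality of Lemma~\ref{S4: cpt embed Trace of H1 Poincare ineq}(iii) as the crucial input. Recall that $\|u\|_{D(\a)}^2 = \|u\|_2^2 + \|\sqrt{\alpha_\varepsilon}\nabla u\|_2^2$ and $\a(u,v) = \int_{\Q}\alpha_\varepsilon(\nabla u|\nabla v)\,dx$.

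For continuity (i), I would simply pull a factor $\sqrt{\alpha_\varepsilon}$ onto each argument and apply Cauchy--Schwarz:
\begin{align*}
 |\a(u,v)| &\leq \int_{\Q} \alpha_\varepsilon |\nabla u|\,|\nabla v|\,dx
 = \int_{\Q} \bigl(\sqrt{\alpha_\varepsilon}|\nabla u|\bigr)\bigl(\sqrt{\alpha_\varepsilon}|\nabla v|\bigr)\,dx\\
 &\leq \|\sqrt{\alpha_\varepsilon}\nabla u\|_2 \,\|\sqrt{\alpha_\varepsilon}\nabla v\|_2
 \leq \|u\|_{D(\a)}\,\|v\|_{D(\a)},
\end{align*}
which gives the claim with $C=1$; the measurability/integrability issues are harmless since $\alpha_\varepsilon \sim \d^{2-2\varepsilon}$ is bounded.

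For $D(\a)$-coercivity (ii), the form is already real and nonnegative: $\Rp(\a(u,u)) = \a(u,u) = \|\sqrt{\alpha_\varepsilon}\nabla u\|_2^2$. The point is to dominate the full graph norm, i.e.\ to also control $\|u\|_2^2$. This is exactly where Lemma~\ref{S4: cpt embed Trace of H1 Poincare ineq}(iii) enters: it gives $\|u\|_2 \leq C\|\sqrt{\alpha_\varepsilon}\nabla u\|_2$ for $u\in D(\a)$. Hence
\[
 \|u\|_{D(\a)}^2 = \|u\|_2^2 + \|\sqrt{\alpha_\varepsilon}\nabla u\|_2^2 \leq (C^2+1)\,\|\sqrt{\alpha_\varepsilon}\nabla u\|_2^2 = (C^2+1)\,\a(u,u),
\]
so $\Rp(\a(u,u)) \geq (C^2+1)^{-1}\|u\|_{D(\a)}^2$, which is the asserted inequality.

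The only genuine subtlety — and the ``main obstacle'' worth flagging — is that coercivity here relies entirely on the Poincar\'e inequality of Lemma~\ref{S4: cpt embed Trace of H1 Poincare ineq}(iii), which in turn depends on the vanishing trace $\gamma_\Gamma(u)=0$ for elements of $D(\a)$ and on the assumption $\varepsilon > 1/2$ (ensuring $\alpha_\varepsilon^{-q}$ is integrable for some $q>1$, so that the weighted gradient bound promotes to a genuine unweighted Sobolev bound on each component $\Omega_i$). Without the homogeneous boundary condition on each connected component, the constant function would lie in the kernel and coercivity would fail; so the real content of the proposition is the application of Lemma~\ref{S4: cpt embed Trace of H1 Poincare ineq}, and the estimates above are then immediate.
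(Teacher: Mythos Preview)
Your proof is correct and follows essentially the same approach as the paper: Cauchy--Schwarz for continuity and Lemma~\ref{S4: cpt embed Trace of H1 Poincare ineq}(iii) for coercivity. Your continuity argument is in fact slightly more direct, splitting $\alpha_\varepsilon=\sqrt{\alpha_\varepsilon}\sqrt{\alpha_\varepsilon}$ rather than routing through the equivalence $\alpha_\varepsilon\sim\d^{2-2\varepsilon}$ as the paper does, but the substance is identical.
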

\begin{proof}
(i)
\begin{align*}
|\a(u,v)|&=|\int_{\Q} \alpha _\varepsilon (\nabla u | \nabla v )\, dx |\\
&\leq\int_{\Q} \alpha _\varepsilon\, \d^{2\varepsilon-2} |\d^{2-2\varepsilon}(\nabla u | \nabla v )|\, dx\\
&\leq C\int_{\Q} |(\d^{1-1\varepsilon}\nabla u | \d^{1-1\varepsilon}\nabla v )|\, dx\\
&\leq C \|u\|_{D(\a)} \|v\|_{D(\a)}.
\end{align*}
The last step follows from H\"older inequality and $|(a|b)|\leq |a| |b|$.\\
(ii) It is a direct consequence of Lemma~\ref{S4: cpt embed Trace of
  H1 Poincare ineq}(iii) that 
\begin{align*}
\Rp(\a(u,u))&=\a(u,v)=\int_{\Q} \alpha _\varepsilon |\nabla u|^2\, dx
              \geq C\|u\|_{D(\a)}. 
\end{align*}
\end{proof}

Proposition \ref{S4: cont-L_2-coer} shows that $\a$ with $D(\a)$ is
densely defined, sectorial and closed on $L_{2,\pi}(\QG)$. By
\cite[Theorems~VI.2.1, IX.1.24]{Kato80}, we can find an associated
operator $T$ such that $-T$ generates a strongly continuous analytic
semigroup of contractions on $L_{2,\pi}(\QG)$, i.e. satisfying
$\|e^{-tT}\|_{\L(L_{2,\pi}(\QG))}\leq 1$ for all $t\geq 0$. Its domain
is given by  
$$
 D(T):=\big\{u\in D(\a): \exists !\: v\in L_{2,\pi}(\QG)\text{ s.t. }\a(u,\phi)=\langle
 v , \phi \rangle, \forall \phi\in D(\a)\big\}
$$ 
and $Tu=v$; $D(T)$ is a core of $\a$. The operator $T$ is unique in
the sense that there exists only one operator satisfying
$$
\a(u,v)= \langle T u, v \rangle,\quad u\in D(T),\, v\in D(\a).
$$
We have proved that
$$
W^{2,-2\varepsilon}_{2,\pi}(\QG) \overset{d}{\hookrightarrow}
D(\a)\overset{d}{\hookrightarrow} \zW^1_{1+s,\pi}(\QG)
\hookrightarrow
W^1_{1+s,\pi}(\QG)
.$$
Here $\zW^1_{1+s,\pi}(\QG)$ is the closure of $\mathcal{D}_\pi(\QG)$ in 
$W^1_{1+s,\pi}(\QG)$.
Then  we can uniquely extend $\sA_{\alpha_\varepsilon}$, which is originally defined
on $W^{2,-2\varepsilon}_{2,\pi}(\QG)$ as in Section~3,
to $\zW^1_{1+s,\pi}(\QG)$. 
Now $\sA_{\alpha_\varepsilon}$ can be defined on $\zW^1_{1+s,\pi}(\QG)$ by
$$
\langle \sA_{\alpha_\varepsilon} u ,v\rangle= -\a(u,v)
,\quad
u\in \zW^1_{1+s,\pi}(\QG), v\in \mathcal{D}_\pi(\QG)
$$
and 
$\sA_{\alpha_\varepsilon} \in \L( \zW^1_{1+s,\pi}(\QG), (\zW^1_{1+s,\pi}(\QG))^\prime) $.
Restricted onto $D(\a)$ and by a density argument, this yields
that for any $u,v\in D(\a)$ 
$$\langle \sA_{\alpha_\varepsilon} u ,v\rangle= -\a(u,v),  $$
and thus
$$
|\langle \sA_{\alpha_\varepsilon}u, v\rangle|\leq  C\|u\|_{D(\a)}\|v\|_{D(\a)}, 
$$
which implies that $\sA_{\alpha_\varepsilon}\in
\L(D(\a),
(D(\a))^\prime)$. 
Since it holds that
$$
\sA_{\alpha_\varepsilon}\in \L( W^{2,-2\varepsilon}_{2,\pi}(\QG), L_{2,\pi}(\QG))
$$ 
supported by Lemmas~\ref{S2: nabla} and \ref{S2: pointwise mul}, we further
have that, for any $u\in  W^{2,-2\varepsilon}_{2,\pi}(\QG)$ and $v\in D(\a)$,
$$
|\a(u,v)|=|\langle \sA_{\alpha_\varepsilon}u, v\rangle| 
\leq  \|\sA_{\alpha_\varepsilon}u\|_2 \|v\|_2 
\leq C\| u\|_{2,2;-2\varepsilon}\|v\|_2.
$$ 
It is known that a function $u\in D(T)$ iff $u\in D(\a)$ and 
$$
|\a(u,v)|\leq C\|v\|_2,\quad v\in D(\a).
$$
Therefore, we conclude that
$$ 
T= \sA_{\alpha_\varepsilon}|_{D(T)}
\quad \text{and}\quad
W^{2,-2\varepsilon}_{2,\pi}(\QG)\subset D(T).
$$
On the other hand, by picking $w=0$ in Proposition~\ref{S3.1: MR-thm}
yields 
$$ 
\sA_{\alpha_\varepsilon}\in
\mathcal{MR}_p(W^{2,-2\varepsilon}_{2,\pi}(\QG),L_{2,\pi}(\QG)),\:
1<p<\infty.
$$
It is well known, see e.g. \cite[Proposition~1.2]{Pru03}, that this
implies the existence of some $\omega\geq 0$ such that 
$$
\omega+\sA_{\alpha_\varepsilon}\in
\Lis(W^{2,-2\varepsilon}_{2,\pi}(\QG),L_{2,\pi}(\QG))\cap
\S(L_{2,\pi}(\QG))
$$
with spectral angle $\phi_{\omega+\sA_{\alpha_\varepsilon}} <\pi/2$.

Due to well-known results of semigroup theory, we know that for the
same $\omega$ as above
$$
\omega+\sA_{\alpha_\varepsilon}\in
\Lis(D(T),L_{2,\pi}(\QG)),
$$
from which we infer right away that 
$$
D(T)\doteq W^{2,-2\varepsilon}_{2,\pi}(\QG)
.$$
By standard real analysis knowledge, we know that $u\in D(\a)$
implies the validity of $(|u|-1)^+  \sg u \in D(\a)$ and that
\begin{align*}
\nabla\bigl[(|u|-1)^+  \sg u\bigr]=
\begin{cases}
\nabla u, \quad & |u|>1;\\
0, & |u|\leq 1.
\end{cases}
\end{align*}
Here it is understood that 
\begin{align*}
\sg u:=
\begin{cases}
u/|u|, \quad & u\neq 0;\\
0, &u=0.
\end{cases}
\end{align*}
Now it is clear that
$$\Rp\bigl[\a(u,(|u|-1)^+  \sg u)\bigr]\geq 0 .$$
By \cite[Theorem~2.7]{Ouh92}, the semigroup
$\{e^{-t\sA_{\alpha_\varepsilon}}\}_{t\geq 0}$ is
$L_\infty$-contractive, or more precisely, 
$$
\|e^{-t\sA_{\alpha_\varepsilon}} u\|_\infty \leq \|u\|_\infty,\quad
t\geq 0,\quad   u\in L_{2,\pi}(\QG)\cap L_{\infty,\pi}(\QG). 
$$
We can then follow a well-known argument, see
\cite[Chapter~1.4]{Dav89}, to prove that for each $1<p<\infty$,
$\{e^{-t\sA_{\alpha_\varepsilon}}\}_{t\geq 0}$ can be extended to a
strongly continuous analytic semigroup of contractions on
$L_{p,\pi}(\QG)$. Then we can determine the domain for this semigroup
by the same argument used previously for the semigroup on
$L_{2,\pi}(\QG)$. 
In sum, we can prove the following assertion. 
\begin{lem}\label{S4: semigroup}
$-\sA_{\alpha_\varepsilon}$ generates a strongly
continuous analytic semigroup of contractions on $L_{p,\pi}(\QG)$ with domain
$W^{2,-2\varepsilon}_{p,\pi}(\QG)$ for all $1<p<\infty$.
\end{lem}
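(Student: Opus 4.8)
The plan is to assemble the statement in four stages, the groundwork for the first three having already been laid in the discussion preceding the lemma. \textbf{Stage~1 (the case $p=2$).} By Proposition~\ref{S4: cont-L_2-coer} the symmetric form $\a$ is continuous and $D(\a)$-coercive on $L_{2,\pi}(\QG)$, so Kato's representation theorems (\cite[Theorems~VI.2.1, IX.1.24]{Kato80}) supply a self-adjoint operator $T$ associated with $\a$ whose negative generates a strongly continuous analytic semigroup of contractions on $L_{2,\pi}(\QG)$. Comparing the variational characterization of $T$ with the distributional extension of $\sA_{\alpha_\varepsilon}$ and invoking maximal $L_2$-regularity (Proposition~\ref{S3.1: MR-thm} with $w=0$) together with the standard semigroup description of the domain of the generator identifies $T=\sA_{\alpha_\varepsilon}$ and $D(T)\doteq W^{2,-2\varepsilon}_{2,\pi}(\QG)$.

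\textbf{Stage~2 ($L_\infty$-contractivity).} Next I would verify the Beurling--Deny--Ouhabaz criterion: a standard argument shows that $u\in D(\a)$ forces $(|u|-1)^+\sg u\in D(\a)$, with gradient equal to $\nabla u$ on $\{|u|>1\}$ and $0$ elsewhere, so, since $\alpha_\varepsilon\ge0$ pointwise, $\Rp\bigl[\a\bigl(u,(|u|-1)^+\sg u\bigr)\bigr]=\int_{\{|u|>1\}}\alpha_\varepsilon|\nabla u|^2\,dx\ge0$. By \cite[Theorem~2.7]{Ouh92} the semigroup $\{e^{-t\sA_{\alpha_\varepsilon}}\}_{t\ge0}$ is then $L_\infty$-contractive.

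\textbf{Stage~3 (extrapolation and domain).} Since $\a$ is symmetric the semigroup is self-adjoint on $L_{2,\pi}(\QG)$; dualizing the $L_\infty$-bound yields $L_1$-contractivity, and Riesz--Thorin interpolation produces consistent contractions on every $L_{p,\pi}(\QG)$, which for $1<p<\infty$ are strongly continuous (density of $L_{2,\pi}(\QG)\cap L_{p,\pi}(\QG)$, $\Q$ having finite measure) and analytic (Stein interpolation of the analytic extension); this is the argument of \cite[Chapter~1.4]{Dav89}. To pin down the domain of the resulting generator on $L_{p,\pi}(\QG)$, I would use maximal $L_p$-regularity: Proposition~\ref{S3.1: MR-thm} with $w=0$ gives $\sA_{\alpha_\varepsilon}\in\mathcal{MR}_p(W^{2,-2\varepsilon}_{p,\pi}(\QG),L_{p,\pi}(\QG))$ for every $1<p<\infty$, so by \cite[Proposition~1.2]{Pru03} there is $\omega\ge0$ with $\omega+\sA_{\alpha_\varepsilon}\in\Lis\bigl(W^{2,-2\varepsilon}_{p,\pi}(\QG),L_{p,\pi}(\QG)\bigr)\cap\S\bigl(L_{p,\pi}(\QG)\bigr)$ of spectral angle $<\pi/2$; this operator generates an analytic semigroup on $L_{p,\pi}(\QG)$ which, restricted to the dense subspace $L_{2,\pi}(\QG)\cap L_{p,\pi}(\QG)$, coincides with the semigroup built above, whence by uniqueness of the generator of a $C_0$-semigroup the two generators agree and the domain is $W^{2,-2\varepsilon}_{p,\pi}(\QG)$.

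The step I expect to require the most care is the compatibility bookkeeping in Stage~3 --- establishing that the Davies-extrapolated semigroup on $L_{p,\pi}(\QG)$ and the semigroup furnished by maximal $L_p$-regularity are literally the same object. This rests on the two facts that both are strongly continuous on $L_{p,\pi}(\QG)$ and that both restrict to $e^{-t\sA_{\alpha_\varepsilon}}$ on the dense intersection $L_{2,\pi}(\QG)\cap L_{p,\pi}(\QG)$; once this is settled the equality of domains is immediate, and everything else reduces to routine verification of the form-theoretic criteria and to quoting the cited results.
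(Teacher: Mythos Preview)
Your proposal is correct and follows essentially the same route as the paper: the entire argument of the lemma is in fact the discussion immediately preceding its statement, which proceeds through exactly your Stages~1--3 (form methods plus Kato for $p=2$, domain identification via maximal regularity, the Ouhabaz $L_\infty$-criterion, and the Davies extrapolation of \cite[Chapter~1.4]{Dav89}), and then identifies the $L_p$-domain ``by the same argument used previously for the semigroup on $L_{2,\pi}(\QG)$''. Your explicit consistency check between the Davies-extrapolated semigroup and the one arising from maximal $L_p$-regularity is precisely the content the paper compresses into that phrase.
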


Now we apply the form operator method to the operator
$\sA_{\alpha_\varepsilon} -\omega$ for some sufficiently small
positive $\omega$. By Lemma~\ref{S4: cpt embed Trace of H1 Poincare
  ineq}(iii), we infer that Proposition~\ref{S4: cont-L_2-coer} still
holds true for $\sA_{\alpha_\varepsilon} -\omega$ with $\omega$ small.
Then we can follow the above argument step by step and prove the same
contraction semigroup property for $\sA_{\alpha_\varepsilon} -\omega$
as in Lemma~\ref{S4: semigroup}. This  immediately gives a spectral
bound for $\sA_{\alpha_\varepsilon}$.
\begin{lem}\label{S4: spectrum bd}
$\sup\big\{ \Rp(\mu): \, \mu\in \sigma(-\sA_{\alpha_\varepsilon})\big\}<0.$
\end{lem}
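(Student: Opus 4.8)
The plan is to turn the coercivity of Proposition~\ref{S4: cont-L_2-coer}(ii) into a genuine spectral gap for $\sA_{\alpha_\varepsilon}$ on $L_{2,\pi}(\QG)$ by exploiting \emph{self-adjointness}, and then to transport the resulting exponential decay to every $L_{p,\pi}(\QG)$ by interpolating against the $L_1$- and $L_\infty$-contractivity already recorded in the proof of Lemma~\ref{S4: semigroup}.

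First, the form $\a(u,v)=\int_{\Q}\alpha_\varepsilon(\nabla u|\nabla v)\,dx$ is Hermitian symmetric, closed and $D(\a)$-coercive, so the associated operator $\sA_{\alpha_\varepsilon}$ is self-adjoint on $L_{2,\pi}(\QG)$; by Lemma~\ref{S4: cpt embed Trace of H1 Poincare ineq}(i) the embedding $D(\a)\hookrightarrow L_{2,\pi}(\QG)$ is compact, so $\sA_{\alpha_\varepsilon}$ has compact resolvent and its spectrum consists of eigenvalues accumulating only at $+\infty$. Since Proposition~\ref{S4: cont-L_2-coer}(ii) together with $\|u\|_{D(\a)}\ge\|u\|_2$ gives $\a(u,u)\ge C\|u\|_{D(\a)}^2\ge C\|u\|_2^2$ for some $C>0$, we obtain $\inf\sigma(\sA_{\alpha_\varepsilon})\ge C>0$ on $L_{2,\pi}(\QG)$, hence by the spectral theorem
$$
 \|e^{-t\sA_{\alpha_\varepsilon}}\|_{\L(L_{2,\pi}(\QG))}=e^{-t\inf\sigma(\sA_{\alpha_\varepsilon})}\le e^{-Ct},\qquad t\ge0 .
$$
(This is the content of the ``contraction semigroup property for $\sA_{\alpha_\varepsilon}-\omega$'' indicated before the statement: for $0<\omega<C$ the shifted form $\a-\omega(\cdot|\cdot)_{L_{2,\pi}}$ again satisfies Proposition~\ref{S4: cont-L_2-coer}, since $\a(u,u)-\omega\|u\|_2^2\ge(C-\omega)\|u\|_{D(\a)}^2$, so $-(\sA_{\alpha_\varepsilon}-\omega)$ generates a contraction semigroup on $L_{2,\pi}(\QG)$.)

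Next, from the proof of Lemma~\ref{S4: semigroup} the semigroup $\{e^{-t\sA_{\alpha_\varepsilon}}\}_{t\ge0}$ is $L_\infty$-contractive, and, by self-adjointness and duality, also $L_1$-contractive. Interpolating (Riesz--Thorin) the $L_{2,\pi}$-bound $e^{-Ct}$ with $\|e^{-t\sA_{\alpha_\varepsilon}}\|_{\L(L_{\infty,\pi}(\QG))}\le1$ when $p\ge2$, respectively with $\|e^{-t\sA_{\alpha_\varepsilon}}\|_{\L(L_{1,\pi}(\QG))}\le1$ when $1<p<2$, yields for every $1<p<\infty$
$$
 \|e^{-t\sA_{\alpha_\varepsilon}}\|_{\L(L_{p,\pi}(\QG))}\le e^{-c_p t},\qquad t\ge0,\qquad c_p:=C\,\min\bigl\{\tfrac2p,\,2-\tfrac2p\bigr\}>0 .
$$
Consequently the growth bound of the semigroup generated by $-\sA_{\alpha_\varepsilon}$ on $L_{p,\pi}(\QG)$ is at most $-c_p$; since the spectral bound of a $C_0$-semigroup generator never exceeds its growth bound,
$$
 \sup\bigl\{\Rp(\mu):\mu\in\sigma(-\sA_{\alpha_\varepsilon})\bigr\}\le-c_p<0 ,
$$
which is the assertion (on $L_{2,\pi}(\QG)$ one even has equality, the supremum being $-\inf\sigma(\sA_{\alpha_\varepsilon})$).

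The main obstacle is precisely this passage from $p=2$ to general $p$: one cannot naively re-run the $L_p$-extrapolation step of Lemma~\ref{S4: semigroup} for $\sA_{\alpha_\varepsilon}-\omega$, because inserting the mass term $-\omega(\cdot|\cdot)$ produces in Ouhabaz's $L_\infty$-contractivity criterion \cite{Ouh92} an extra contribution of the wrong sign. This is why I would keep the unperturbed, sub-Markovian semigroup, read off the sharp exponential $L_2$-rate from self-adjointness plus coercivity, and interpolate only afterwards. All remaining ingredients -- the identification of $\sA_{\alpha_\varepsilon}$ with the form operator, the mapping properties of $\nabla$ and $\div$ from Lemmas~\ref{S2: nabla} and~\ref{S2: pointwise mul}, and the $L_1/L_\infty$-contractivity -- are already available, so no new estimate is needed.
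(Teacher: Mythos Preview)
Your argument is correct and takes a genuinely different route from the paper's. The paper proceeds by applying the entire form-operator machinery to the shifted operator $\sA_{\alpha_\varepsilon}-\omega$ for small $\omega>0$: it observes that Proposition~\ref{S4: cont-L_2-coer} remains valid for the shifted form $\a-\omega(\cdot,\cdot)_{L_2}$ thanks to the Poincar\'e inequality of Lemma~\ref{S4: cpt embed Trace of H1 Poincare ineq}(iii), and then asserts that the argument leading to Lemma~\ref{S4: semigroup} (Ouhabaz's $L_\infty$-contractivity criterion followed by Davies's $L_p$-extrapolation) can be re-run verbatim for the shifted form, yielding that $-(\sA_{\alpha_\varepsilon}-\omega)$ generates a contraction semigroup on every $L_{p,\pi}(\QG)$ and hence that the spectral bound of $-\sA_{\alpha_\varepsilon}$ is at most $-\omega$. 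You instead read off the $L_2$-decay rate directly from self-adjointness plus the coercivity constant, retain the \emph{unperturbed} sub-Markovian semigroup for the $L_1$/$L_\infty$ endpoint bounds, and Riesz--Thorin interpolate afterwards. This buys you the $L_p$-spectral gap without ever touching the shifted form at the $L_\infty$ level, and your diagnosis of the obstacle is accurate: for the shifted form one has $\Rp[\a(u,(|u|-1)^+\sg u)]-\omega\int(|u|-1)^+|u|$, and the mass term can dominate the gradient term for tall bump functions in $D(\a)$, so Ouhabaz's sign condition is not obviously available for $\a-\omega(\cdot,\cdot)_{L_2}$. Your interpolation route is therefore a cleaner way to close the argument.
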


The (exponential) asymptotic stability of the stationary solution $H$
now follows from well-known linearized stability results.

\begin{theorem}\label{S4: Global existence}
Assume that
$\Q=[-1,1)^N$ with $N=1,2$ and 
$$
\varepsilon\in (1-\frac{1}{2p},1),\quad p>\max\{ \frac{2N+2}{\varepsilon},
\frac{4N+5}{2}\}. 
$$
Suppose that $\Gamma$ is a $C^3$-submanifold in
$\mathring{\mathsf{Q}}^N$.
Let $H$ be a component-wise constant function on $\QG$. Then $H$ is a
stationary solution to \eqref{P-M eq} and attracts all solutions which
are initially
$W^{2-\frac{2}{p},\frac{2\varepsilon(1-p)}{p}}_{p,\pi}(\QG)$ close to
$H$. 

More precisely, if the initial datum satisfies
$$
 u_0=H+w_0\text{ with }w_0\in
 W^{2-\frac{2}{p},\frac{2\varepsilon(1-p)}{p}}_{p,\pi}(\QG)
$$ 
and $\|w_0\|_{2-\frac{2}{p},p;\frac{2\varepsilon(1-p)}{p}}$ sufficiently
small, then the solution $u$ to \eqref{P-M eq} converges to $H$
exponentially fast in
$W^{2-\frac{2}{p},\frac{2\varepsilon(1-p)}{p}}_{p,\pi}(\QG)$-topology, 
in particular, in $C^1(\Q)$-topology.
\end{theorem}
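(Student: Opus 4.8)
The plan is to deduce the theorem from the well-known principle of linearized stability for quasilinear parabolic equations possessing maximal $L_p$-regularity (cf. \cite{Pru03}; the underlying local theory is \cite{CleLi93}), applied to the equation obtained from \eqref{P-M eq} by subtracting the stationary solution $H$. Throughout we fix \eqref{S3: ASP: vartheta & p-2}, so that $\vartheta=-2\varepsilon$, $E_0=L_{p,\pi}(\QG)$, $E_1=W^{2,-2\varepsilon}_{p,\pi}(\QG)$ and $E_{1/p}=W^{2-2/p,\frac{2\varepsilon(1-p)}{p}}_{p,\pi}(\QG)$. Setting $u=H+w$ and using that $H$ is constant on each component of $\QG$, hence $\nabla H=0$ there, \eqref{P-M eq} is equivalent to the quasilinear Cauchy problem
\begin{equation*}
\partial_t w+\cA(w)w=0,\qquad w(0)=w_0,\qquad \cA(w):=-\div\bigl(\alpha_\varepsilon(H+w)\nabla\,\cdot\,\bigr).
\end{equation*}
Since $H$ solves \eqref{P-M eq} by the preceding proposition, $w_*=0$ is an equilibrium; and because $w_*=0$, the term $(\cA'(0)[\,\cdot\,])w_*$ of the full linearization vanishes, so the linearization of the quasilinear term at $w_*=0$ is simply $\cA(0)=-\sA_{\alpha_\varepsilon}$, in agreement with the identity $\partial P(0)=\sA_{\alpha_\varepsilon}$ recorded in Section~4.

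Next I would verify the hypotheses of the stability theorem. First, $w\mapsto\cA(w)$ is real-analytic, a fortiori $C^1$, from a ball $B_R\subset E_{1/p}$ into $\mathcal{MR}_p(E_1,E_0)\subset\L(E_1,E_0)$: this is \eqref{S3.1: Reg-main} together with its two-dimensional analogue, which rest on Lemmas~\ref{S2: nabla}, \ref{S2: pointwise mul}, \ref{S3.1: alpha BC1} and Proposition~\ref{S3.1: MR-thm}. Second, the linearization $\cA(0)=-\sA_{\alpha_\varepsilon}$ must lie in $\mathcal{MR}_p(E_1,E_0)$ over the unbounded interval $\R_+$ with spectrum bounded away from the imaginary axis on the stable side. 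Here Proposition~\ref{S3.1: MR-thm} with $w=0$ (and its $N=2$ counterpart) renders $\cA(0)$ $\mathcal R$-sectorial with $\mathcal R$-angle below $\pi/2$, while Lemmas~\ref{S4: semigroup} and \ref{S4: spectrum bd} show that the solutions of the linearized equation \eqref{S4: ALE} decay exponentially, i.e. that the analytic semigroup generated by $-\cA(0)$ has negative growth bound; since the semigroup is analytic, the spectral mapping theorem upgrades the $\mathcal R$-sectoriality and the exponential decay to maximal $L_p$-regularity of $\cA(0)$ on $\R_+$. In particular $w_*=0$ is a normally stable equilibrium carrying no spectrum on the imaginary axis.

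Granting local well-posedness of \eqref{P-M eq} for $u_0=H+w_0$ with $w_0\in E_{1/p}$ (Theorems~\ref{S3.1: PM-thm-ID} and \ref{S3.2: PM-thm-2D}), the principle of linearized stability then produces $\delta,M,\beta>0$ such that, whenever $\|w_0\|_{E_{1/p}}<\delta$, the reduced problem admits a unique global solution $w\in L_p(\R_+,E_1)\cap H^1_p(\R_+,E_0)\hookrightarrow C(\R_+,E_{1/p})$ with $\|w(t)\|_{E_{1/p}}\le Me^{-\beta t}\|w_0\|_{E_{1/p}}$ for all $t\ge0$. Thus $u=H+w$ solves \eqref{P-M eq} globally and $u(t)\to H$ exponentially fast in the $W^{2-2/p,\frac{2\varepsilon(1-p)}{p}}_{p,\pi}(\QG)$-topology. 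Finally, under \eqref{S3: ASP: vartheta & p-2} one has $p>2N+2>N+2$, so Proposition~\ref{S2: Sobolev embedding} gives $E_{1/p}\hookrightarrow BC^{1,\gamma}_\pi(\QG)$ with $\gamma=\frac{2\varepsilon(1-p)+N}{p}$; conditions \eqref{S3: ASP: vartheta & p-2} force both $\gamma<0$ and $\gamma+1<0$, so the argument already used in the proof of Lemma~\ref{S3: reduction p/2} shows that every $w\in E_{1/p}$, together with $\nabla w$, has vanishing trace along $\Gamma$ and therefore extends to an element of $BC^1_\pi(\Q)$. Consequently $w(t)\to0$ in $C^1(\Q)$ as well, which yields the final assertion.

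The analytically substantial ingredients are all available: maximal $L_p$-regularity for the degenerate operator $\sA_{\alpha_\varepsilon}$ (Proposition~\ref{S3.1: MR-thm}), the strict negativity of its spectral bound obtained from the weighted coercivity and Poincar\'e estimates (Lemmas~\ref{S4: cpt embed Trace of H1 Poincare ineq}--\ref{S4: spectrum bd}), and the real-analytic dependence \eqref{S3.1: Reg-main}. The residual work is therefore largely bookkeeping, and the one genuinely delicate point is to cast the problem \emph{exactly} in the normal form required by the stability theorem within the weighted scale $(E_0,E_1,E_{1/p})$: in particular, to confirm that the linearization of the quasilinear term at $w_*=0$ is \emph{precisely} $\sA_{\alpha_\varepsilon}$ (so that no spurious first-order term survives in the nonlinear remainder), and that the $\mathcal R$-sectoriality of $\cA(0)$ together with its negative spectral bound genuinely yields maximal regularity on the half-line $\R_+$, rather than merely on compact time intervals — it being the former that lets the fixed-point scheme underlying linearized stability close with an exponential rate.
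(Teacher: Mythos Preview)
Your proposal is correct and follows essentially the same route as the paper: reduce to the equation for $w=u-H$, identify the linearization at $w_*=0$ as $\sA_{\alpha_\varepsilon}$, and invoke the principle of linearized stability using maximal $L_p$-regularity together with the negative spectral bound from Lemma~\ref{S4: spectrum bd}. The paper itself condenses this final step to a single sentence (``now follows from well-known linearized stability results''), whereas you spell out the verification of hypotheses and the $C^1$-embedding at the end; but the strategy and the ingredients invoked are the same.
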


\end{document}